\documentclass[10pt]{article}
\usepackage{bbm}
\usepackage{amsfonts}
\usepackage{amsmath}
\allowdisplaybreaks
\usepackage{amssymb}
\usepackage{fancyhdr}
\usepackage{setspace}
\usepackage{latexsym}
\usepackage{mathrsfs}
\usepackage{amsthm}
\usepackage{floatrow}
\usepackage{pdfsync}
\usepackage{tikz}
\usepackage{subfigure}

\usepackage{cite,enumitem,graphicx}
\usepackage[backref,colorlinks,linkcolor=red,anchorcolor=green,citecolor=blue]{hyperref}
\usepackage{cleveref}
\usepackage[english]{babel}

\numberwithin{equation}{section}
\newtheorem{theorem}{Theorem}[section]
\newtheorem{proposition}[theorem]{Proposition}
\newtheorem{lemma}[theorem]{Lemma}
\newtheorem{corollary}[theorem]{Corollary}
\newtheorem{definition}[theorem]{Definition}
\newtheorem{remark}[theorem]{Remark}
\newtheorem{example}[theorem]{Example}
\newcommand{\ud}{\mathrm{d}}

\newcommand{\bt}{\begin{theorem}}
\newcommand{\et}{\end{theorem}}
\newcommand{\bl}{\begin{lemma}}
\newcommand{\el}{\end{lemma}}
\newcommand{\bd}{\begin{definition}}
\newcommand{\ed}{\end{definition}}
\newcommand{\bc}{\begin{corollary}}
\newcommand{\ec}{\end{corollary}}
\newcommand{\bp}{\begin{proof}}
\newcommand{\ep}{\end{proof}}
\newcommand{\bx}{\begin{example}}
\newcommand{\ex}{\end{example}}
\newcommand{\bi}{\begin{exercise}}
\newcommand{\ei}{\end{exercise}}
\newcommand{\bo}{\begin{proposition}}
\newcommand{\eo}{\end{proposition}}
\newcommand{\br}{\begin{remark}}
\newcommand{\er}{\end{remark}}
\newcommand{\be}{\begin{equation}}
\newcommand{\ee}{\end{equation}}
\newcommand{\ba}{\begin{align}}
\newcommand{\ea}{\end{align}}
\newcommand{\bn}{\begin{enumerate}}
\newcommand{\en}{\end{enumerate}}
\newcommand{\bg}{\begin{align*}}
\newcommand{\bcs}{\begin{cases}}
\newcommand{\ecs}{\end{cases}}

\newcommand{\bean}{\begin{eqnarray*}}
\newcommand{\eean}{\end{eqnarray*}}

\makeatletter % @ is now a normal "letter" for TeX
\renewcommand\theequation{\thesection.\arabic{equation}}
\@addtoreset{equation}{section}
\makeatother%@isrestoredasa"non-letter"characterforTeX
\numberwithin{equation}{section}

\begin{document}

\begin{center}
\textbf{Existence and nonexistence of normalized solutions for nonlinear Schr\"{o}dinger equation involving combined nonlinearities  in bounded domain}\\
\end{center}

\begin{center}
{ Zhen-Feng Jin$^{1}$, Weimin Zhang$^{2}$\\

$^{1}$ Shanxi Key Laboratory of Cryptography and Data Security \& School of Mathematical Sciences, Shanxi Normal University, Taiyuan, Shanxi, 030031, P.R. China\\
\smallskip
$^{2}$ School of Mathematical Sciences, Zhejiang Normal University, Jinhua, 321004, P.R. China
\smallskip
}

\end{center}
\begin{center}
\renewcommand{\theequation}{\arabic{section}.\arabic{equation}}
\numberwithin{equation}{section}
\footnote[0]{\hspace*{-7.4mm}
%$^*$Corresponding author.\\
AMS Subject Classification: 35J20, 35B33, 35Q55, 35J61.\\
{E-mail addresses: jinzhenfeng@sxnu.edu.cn (Z. Jin), zhangweimin2021@gmail.com (W. Zhang).}}
\footnote[0]{\hspace{-7.4mm}
{ Z. Jin was supported by Fundamental Research Program of Shanxi Province of China(No. 202303021212160) and the China Scholarship Council(No. 202508140048).}}
\end{center}

\begin{abstract}
\noindent In this paper, we consider the existence, multiplicity and nonexistence of  solutions for the following equation
\begin{equation*}
\begin{cases}
\begin{aligned}
&-\Delta u+\omega u=\mu u^{p-1}+u^{q-1},~ u>0 \quad &&\text { in } \Omega, \\
&u=0  &&\text { on } \partial\Omega, \\
\end{aligned}
\end{cases}
\end{equation*}
with prescribed $L^2$-norm $\|u\|_2^2=\rho$, where $N\ge 1$, $\rho>0$, $\mu\in \mathbb{R}$, $1<p\le q$, and $\Omega\subset\mathbb{R}^N$ is a bounded smooth domain. The parameter $\omega\in\mathbb{R}$ arises as a Lagrange multiplier. Firstly, when $2<p\le q\le \frac{2N}{(N-2)^+}$ and $\rho$ is small, we establish the existence of a local minimizer of energy. Furthermore, when $\mu\ge 0$ and $\Omega$ is a star-shaped domain, using the monotonicity trick and the Pohozaev identity, we show that there exists a second solution which is of mountain pass type. Secondly, when  $\mu\ge 0$, $N\ge 3$, $1<p\le 2$, $q\ge \max\left\{\frac{2N}{N-2}, 3\right\}$ and $\Omega$ is a convex domain, using the moving-plane method, we prove the nonexistence of normalized solutions for large $\rho$. Finally, when $\mu=0$, $N\ge 3$, $q=\frac{2N}{N-2}$ and $\Omega$ is a ball, we give a dichotomy result of normalized solutions for the Br\'{e}zis-Nirenberg problem by continuation arguments.
 \end{abstract}
\textbf{Keywords:} Normalized solutions, Bounded domain, Combined nonlinearities, Sobolev critical exponent.

\section{Introduction}\label{s1}
This paper is concerned with the existence and nonexistence of solutions for the following nonlinear Schr\"{o}dinger equation involving combined power nonlinearities \begin{equation}\label{041003}
\begin{cases}
\begin{aligned}
&-\Delta u+\omega u=\mu u^{p-1}+u^{q-1},~ u>0 \quad &&\text { in } \Omega, \\
&u=0  &&\text { on } \partial\Omega, \\
&\|u\|_2^2=\rho>0,
\end{aligned}
\end{cases}
\end{equation}
where $N\ge 1$, $\mu\in \mathbb{R}$, $1<p\le q$  and $\Omega\subset\mathbb{R}^N$ is a bounded smooth domain. Here the parameter $\omega\in\mathbb{R}$ is not fixed, which arises as a Lagrange multiplier with respect to the mass constraint $\|u\|^2_{L^2(\Omega)}=\rho$.

\smallskip
Over the past three decades, many researchers have considered the following Schr\"{o}dinger equation
\begin{equation}\label{25052701}
-\Delta u+\omega u=f(u)\;\; \text { in } \Omega
\end{equation}
with the mass constraint
\begin{equation}\label{25052702}
\|u\|^2_{L^2(\Omega)}=\rho.
\end{equation}
A couple  $(\omega, u)$ satisfying \eqref{25052701}-\eqref{25052702}  is usually called a {\it normalized solution} to \eqref{25052701}. Problem \eqref{25052701}-\eqref{25052702} comes from the study of standing waves for the nonlinear Schr\"{o}dinger equation
\begin{equation}\label{2207181059}
    i\psi_t-\Delta \psi =f(\psi) \;\; \text{in}~ \Omega\times (0,\infty),
\end{equation}
where $\psi$ has the form
\begin{equation}\label{2207181100}
\psi(x,t)=e^{-i\omega t} u(x),\;\; (x,t)\in \Omega\times (0,\infty),
\end{equation}
and $u$ is a real function. For problem \eqref{2207181059}-\eqref{2207181100}, the $L^2$-norm of $u$ stands for the mass of a particle, and $\omega$ stands for the frequency. There are already a lot of papers concerning the fixed frequency problem. Recently many authors are focused on the prescribed mass problem. If $f(\psi)=e^{-i\omega t}f(u)$ and the mass is prescribed, \eqref{2207181059}-\eqref{2207181100} will be reduced to the problem \eqref{25052701}-\eqref{25052702}. Here, we consider \eqref{25052701}-\eqref{25052702} under the zero Dirichlet boundary condition, that is
%When \eqref{25052701}-\eqref{25052702} subject to zero Dirichlet boundary condition, that is
\begin{equation}\label{2506161145}
\begin{cases}
\begin{aligned}
&-\Delta u+\omega u=f(u) \quad &&\text { in } \Omega, \\
&u=0  &&\text { on } \partial\Omega, \\
&\|u\|_2^2=\rho.
\end{aligned}
\end{cases}
\end{equation}
One can derive solutions to \eqref{2506161145} by looking for critical points of the associated energy functional
\begin{equation}\label{2303021513}
\begin{aligned}
E(u)=\frac{1}{2}\|\nabla u\|_2^2 -\int_{\Omega}F(u) \ud x \quad\;\; \mbox{with}\;\; F(u) =\int_0^{u}f(t)\ud t
\end{aligned}
\end{equation}
on the constraint
\[
S_{\rho}:=\{u\in H^1_0(\Omega):\|u\|^2_{L^2(\Omega)}=\rho\}.
\]

When $\Omega=\mathbb{R}^N$, according to {\it Gagliardo-Nirenberg} inequality (see \eqref{050502} below), the functional \eqref{2303021513} is coercive on $S_\rho$ if $F(u)$ grows slower than $|u|^{2+\frac{4}{N}}$ at infinity, and $E$ is unbounded from below on $S_\rho$ if the growth of $F(u)$ at infinity is faster than $|u|^{2+\frac{4}{N}}$. Accordingly the value $2+\frac{4}{N}$ is crucial for deciding the shape of $E$, which is usually called {\it mass-critical} or {\it$L^2$-critical} exponent.

Jeanjean  \cite{Jeanjean97} did a seminal work and considered a class of Sobolev supcritical and mass-subcritical problem where $f(u)$ can be chosen as $\sum\limits_{1\leq j \leq k}a_j|u|^{\sigma_j-2}u$ with $k\ge 1, a_j>0$ and $2+\frac{4}{N}<\sigma_j<\frac{2N}{(N-2)^+}$. In this case, the energy functional possesses a mountain pass geometry on $S_\rho$. He derived a Palais-Smale (for short (PS)) sequence approaching Pohozaev manifold, by using a crucial scaling transformation
\begin{equation}\label{2506151703}
u_t=t^{\frac{N}{2}}u(tx),\;\; t>0,
\end{equation}
and constructing the augmented functional
$$\widetilde{E}(u, t):=E(u_t), \;\; (u, t)\in S_\rho\times \mathbb{R}^+.$$
As a consequence, this (PS) sequence can be proved to be bounded in $H^1(\mathbb{R}^N)$. Due to this method, a lot of articles concerning normalized solutions appear in recent years, see for example \cite{BMRV2021, JZ2024, MRV2022, Soave20_JDE, Soave20_JFA} and reference therein.

\smallskip
Unfortunately, for general domain $\Omega\subsetneqq\mathbb{R}^N$, the scaling \eqref{2506151703} may escape from the energy space,  hence Jeanjean's method could be invalid. This brings the main difficulties in obtaining a bounded (PS) sequence at a minimax level. When $\Omega$ is a bounded domain, as far as we know, Struwe's monotonicity trick was valid for getting a (PS) sequence satisfying Pohozaev identity, so that this sequence is bounded. There are some papers dealing with the existence issue of  \eqref{2506161145}. We state the current results in next subsection.

\subsection{The current results in bounded domain}

In this subsection, we assume $\Omega\subset\mathbb{R}^N$ is a bounded smooth domain. For different $\mu$, $p$ and $q$, we will introduce the existence about  positive solutions of \eqref{2506161145} for the model $f(u)=\mu |u|^{p-2}u+|u|^{q-2}u$.   Let $2^{*}:=\frac{2N}{(N-2)^+}$, which denotes the Sobolev critical  exponent if $N\ge 3$, 

\medskip
\noindent {\bf $\bullet$ Case $\mu=0$, $N\ge 1$ and $2<q<2^*$}
\medskip

For the simplest case $f(u)=|u|^{q-2}u$ and $\Omega\subset\mathbb{R}^N(N\geq1)$ is a ball, Noris, Tavares and Verzini \cite{Noris14} gave the first result about the existence and non-existence of positive  solutions for problem \eqref{2506161145}. They proved that
\begin{itemize}
\item[\rm (i)] when $2<q<2+\frac{4}{N}$,  \eqref{2506161145} has a unique positive solution for every $\rho>0$, which is a global minimizer;
\item[\rm (ii)]  when $q=2+\frac{4}{N}$, there exists $\rho^{*}>0$ such that \eqref{2506161145} has a unique positive solution that is a global minimizer for $0<\rho<\rho^{*}$ and no positive solution for $\rho\ge \rho^{*}$;
\item[\rm (iii)]  when $2+\frac{4}{N}<q<2^{*}$, there exists $\rho^{*}>0$ such that \eqref{2506161145} admits a positive solution if and only if $0<\rho\le \rho^*$. Moreover, \eqref{2506161145} has at least two positive solutions for $0<\rho<\rho^*$.
\end{itemize}
\medskip
\noindent{\bf $\bullet$ Case $\mu=0$, $N\ge 3$ and $q=2^*$}
\medskip

When  $N\ge 3$ and $f(u)=|u|^{2^*-2}u$, Noris, Tavares and Verzini in \cite[Theorem 1.11]{Noris19} proved that if $ 0<\rho\le \frac{2}{N\lambda_1(\Omega)}(\frac{1}{\mathcal{S}})^{\frac{N-2}{2}}$, \eqref{2506161145} admits a positive local minimizer for some $\omega\in (-\lambda_1(\Omega), 0)$, where $\lambda_1(\Omega)$ is the first Dirichlet eigenvalue of $-\Delta$, and $\mathcal{S}$ is the best Sobolev constant, see \eqref{050501}. Furthermore, when $\Omega$ is star-shaped, Pierotti, Verzini and Yu \cite{Pierotti25} established a bounded (PS) sequence by taking advantage of monotonicity trick. By exploiting sharp estimates of the mountain pass level, they showed the strong convergence of this sequence, thus \eqref{2506161145} admits a positive mountain pass type solution for $\rho$ sufficiently small. Recently, Chang, Liu and Yan \cite{CLY2025} use Sobolev subcritical case to approach the Sobolev critical  case. They improved the results of \cite{Pierotti25} to the general bounded domain.

\smallskip
For $N\ge 6$, Lv, Zeng and Zhou \cite{LvZZ25} constructed a positive $k$-spike solution of \eqref{2506161145} in some suitable bounded domain $\Omega$ for some $k\in\mathbb{N}^+$, where $k$ depends on the Robin function and Green function of $\Omega$. Using blow-up analysis and local Pohozaev identity, they also proved that the $k$-spike solutions are locally unique.

\medskip
\noindent{\bf $\bullet$ Case $\mu\neq 0$, $N\ge 1$ and $2<p<q\le 2^*$}
\medskip

For $\mu\neq 0$,  the  problem is much less understood. Liu and Zhao \cite{LZ25} treated  the general nonlinearity $f$ with Sobolev subcritical near infinity and mass-critical or mass-supcritical near the origin, that is $2+\frac{4}{N}\le p\le q<2^*$. They obtained the existence of a local minimizer solution if  $N\ge3$, and a mountain pass type solution if $\Omega$ is a star-shaped domain and $N\ge3$. It seems that there is no literature concerning the case $2<p<2+\frac{4}{N}$ and $p\le q<2^*$ so far.

\smallskip
Song and Zou \cite{SZarxiv25} considered the existence of positive solutions to \eqref{2506161145} with $f(u)=\mu |u|^{p-2}u+|u|^{2^*-2}u~(N\geq3)$ on a bounded star-shaped domain. When $\mu=0$, or $\mu>0$ and $2+\frac{4}{N}<p<2^*$, or $\mu<0$ and $2<p<2^*$, they obtained the existence of a local minimizer for small $\rho$, which is a positive solution to \eqref{2506161145}. Using the monotonicity trick and the Pohozaev identity, when $\mu=0$, or $\mu>0$, $2+\frac{4}{N}<p<2^*$ and $N\in\{3,4,5\}$, or $\mu<0$, $2<p<2^*-1$ and $N\in\{3,4,5\}$, they established the existence of positive mountain pass type solution of \eqref{2506161145} for small $\rho$. To our knowledge,  there are no papers dealing with the case $2<p \le 2+\frac{4}{N}$ and $\mu > 0$.

\medskip
\noindent{\bf $\bullet$ Some other solutions and nonlinearities}
\medskip

Other significant results concerning normalized solutions of problem \eqref{2506161145} have been investigated and extended within different contexts, such as equations with potentials \cite{BQZ24, LM24, QZ24}, solutions with higher Morse index \cite{PD17}, Kirchhoff equations \cite{WC24}, general (non-autonomous) nonlinearities \cite{Song23}, nodal solutions \cite{DDGS2025} and sign-changing solutions \cite{SZ25}. Some results related to metric graph problems, see \cite{CLS24, Boni25} and reference therein.

\subsection{Our main results}
In this paper, we are interested in the existence and nonexistence of solutions to \eqref{041003}. Provided $q\le 2^*$, solutions for problem \eqref{041003} will be obtained as positive critical points of the energy functional
\begin{equation*}
\begin{aligned}
I(u)=\frac{1}{2}\int_{\Omega}|\nabla u|^2 \ud x
-\frac{\mu}{p}\int_{\Omega}|u|^{p} \ud x
-\frac{1}{q}\int_{\Omega}|u|^{q} \ud x,\quad \forall\, u\in H^1_0(\Omega)
\end{aligned}
\end{equation*}
on the constraint $S_{\rho}$. Motivated by the ideas in \cite{Noris19}, we first consider the existence of local minimizer of $I|_{S_\rho}$ with $q\le 2^*$. Let $\lambda_1(\Omega)$ be the first Dirichlet eigenvalue of $-\Delta$ in $\Omega$, and $\varphi_1$ be the corresponding first eigenfunction  satisfying $\|\varphi_1\|_{L^2(\Omega)}=1$ and $\varphi_1>0$ in $\Omega$. For $\alpha\geq\lambda_1(\Omega)$, we denote
\begin{equation*}
\begin{aligned}
A_{\alpha}:=&\Big\{u \in S_{\rho}:\int_{\Omega} |\nabla u|^2 \ud x\leq \rho \alpha \Big\},\\
\partial A_{\alpha}:=&\Big\{u \in S_{\rho}:\int_{\Omega} |\nabla u|^2 \ud x= \rho \alpha\Big\}.
\end{aligned}
\end{equation*}
In particular, one has $A_\alpha=\partial A_\alpha=\{\rho^{\frac12}\varphi_1, -\rho^{\frac12}\varphi_1\}$ in case $\alpha=\lambda_1(\Omega)$. We shall now consider the following local minimization problems,
\begin{equation}\label{2505261720}
\begin{aligned}
m_{\alpha}:=\inf_{A_{\alpha}} I(u),\quad \widetilde{m}_{\alpha}:=\inf_{\partial A_{\alpha}} I(u).
\end{aligned}
\end{equation}
Obviously, $m_{\lambda_1}=\widetilde{m}_{\lambda_1}$, which is attained by $\{\rho^{\frac12}\varphi_1, -\rho^{\frac12}\varphi_1\}$. For the case $p\le q<2^*$, the embeddings $H_0^1(\Omega)\subset L^p(\Omega)$ and $H_0^1(\Omega)\subset L^q(\Omega)$ are compact, hence it is obvious that $m_\alpha$ can be achieved. However, when $N\ge 3$ and $p<q=2^*$, the compactness for the minimizing sequence of $m_\alpha$ depends on the best Sobolev  constant $\mathcal{S}$, see \eqref{050501}. For any $\alpha>\lambda_{1}(\Omega)$, to prove the reachability of $m_\alpha$, we will verify in Proposition \ref{P050601} that any minimizing sequence of $m_\alpha$ is precompact in $H_0^1(\Omega)$ provided there holds
\begin{equation}\label{2505271451}
\rho<
\begin{aligned}
\frac{1}{\alpha-\lambda_1(\Omega)}\Big(\frac{2^{*}}{2}\mathcal{S}^{\frac{2^{*}}{2}}\Big)^{\frac{2}{2^{*}-2}}.
\end{aligned}
\end{equation}
Since the minimizer in $A_\alpha$ may be located on the boundary $\partial A_\alpha$, we shall rule out the occurrence of this situation. To this aim, for $\alpha>\lambda_1(\Omega)$, we will show in Section \ref{S_2} that $\widetilde{m}_{\lambda_1(\Omega)}<\widetilde{m}_{\alpha}$ if $\rho$ is small enough, which means that the minimizer is an interior point of $A_\alpha$. Our first result can be stated as follows.
\begin{theorem}\label{T041801}
Let $\mu\in \mathbb{R}$ and $2<p\le q\le  2^*$. Then there exists a $\rho_0=\rho_0(N,p,q,\Omega,\mu)>0$ such that
 for any $\rho\in(0,\rho_0)$,  \eqref{041003} has a solution $(\omega, u)\in\mathbb{R}\times S_\rho$, which is a local minimizer of $I|_{S_\rho}$.
\end{theorem}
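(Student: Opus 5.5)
The plan is to fix some $\alpha>\lambda_1(\Omega)$, for instance $\alpha=2\lambda_1(\Omega)$, and to minimize $I$ over $A_\alpha$. The proof has three steps.

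\textbf{Step 1 (attainment of $m_\alpha$).} First I show that $m_\alpha$ is attained. On $A_\alpha$ we have $\|\nabla u\|_2^2\le\rho\alpha$. By the Sobolev or Gagliardo--Nirenberg inequality, $\int|u|^p$ and $\int|u|^q$ are bounded, so $m_\alpha>-\infty$ and minimizing sequences are bounded in $H_0^1(\Omega)$. If $q<2^*$, compactness of the embeddings gives a weak limit $u\in S_\rho$ with $\|\nabla u\|_2^2\le\liminf\|\nabla u_n\|_2^2\le\rho\alpha$. Weak lower semicontinuity of the Dirichlet term then gives $I(u)\le m_\alpha$.

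If $q=2^*$, I use the precompactness statement announced for Proposition \ref{P050601}, valid under \eqref{2505271451}. Sketching the argument: write $u_n=u+v_n$ with $v_n\rightharpoonup0$. Brezis--Lieb splits the gradient and the $2^*$ terms. The bound $\|\nabla v_n\|_2^2\le\rho\alpha-\|\nabla u\|_2^2\le\rho(\alpha-\lambda_1)$ follows since $\|u\|_2^2=\rho$ and $\|\nabla u\|_2^2\ge\lambda_1\rho$. Combined with Sobolev and smallness of $\rho$, this makes $\frac12\|\nabla v_n\|^2-\frac1{2^*}\|v_n\|_{2^*}^{2^*}\ge c\|\nabla v_n\|^2$. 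Hence dropping $v_n$ lowers the energy unless $v_n\to0$ strongly. I then replace $\rho_0$ by its minimum with the right-hand side of \eqref{2505271451}.

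\textbf{Step 2 (interior minimizer).} The key step is to show that the minimizer is not on $\partial A_\alpha$, i.e.\ that $m_\alpha\le I(\rho^{1/2}\varphi_1)<\widetilde m_\alpha$ for small $\rho$. On one side,
\[
I(\rho^{1/2}\varphi_1)=\tfrac12\lambda_1\rho+O(\rho^{p/2})+O(\rho^{q/2}).
\]
On $\partial A_\alpha$, Gagliardo--Nirenberg gives
\[
\int|u|^r\le C\|\nabla u\|_2^{N(r-2)/2}\|u\|_2^{r-N(r-2)/2}\le C(\alpha)\rho^{r/2}\quad (r=p,q),
\]
so $\widetilde m_\alpha\ge\frac12\alpha\rho-C(|\mu|+1)(\rho^{p/2}+\rho^{q/2})$. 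Since $p,q>2$, the error terms are $o(\rho)$, while the gap $\frac12(\alpha-\lambda_1)\rho$ is of order $\rho$. Hence for $\rho<\rho_0(N,p,q,\Omega,\mu)$ we get strict inequality, and every minimizer lies in the interior of $A_\alpha$. It is therefore a local minimizer of $I|_{S_\rho}$. This is exactly where $p>2$ is used: for $p\le2$ the $\mu$-term would not be negligible.

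\textbf{Step 3 (a positive solution).} Replacing $u$ by $|u|$ keeps it in $A_\alpha$ and does not change $I$, so we may take $u\ge0$. By Lagrange multipliers, $-\Delta u+\omega u=\mu u^{p-1}+u^{q-1}$ holds for some $\omega\in\mathbb{R}$. When $q=2^*$, elliptic regularity via Brezis--Kato gives $u\in C^{1}$. The strong maximum principle then gives $u>0$, provided $\mu u^{p-2}+u^{q-2}$ is locally bounded, which it is since $p>2$: write the equation as $-\Delta u+(\omega+|\mu| u^{p-2})u\ge0$ when $\mu<0$.

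I expect the main obstacle to be Step 1 in the critical case, namely the concentration-compactness estimate ensuring that bubbling costs energy within $A_\alpha$. I would rely on Proposition \ref{P050601} for it.
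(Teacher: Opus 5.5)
Your proposal is correct and follows the paper's proof. You fix $\alpha>\lambda_1(\Omega)$, obtain attainment of $m_\alpha$ (via Proposition~\ref{P050601} under \eqref{2505271451} when $q=2^*$), and show $m_\alpha\le I(\sqrt{\rho}\,\varphi_1)<\widetilde{m}_\alpha$ for small $\rho$ from the Gagliardo--Nirenberg/Sobolev lower bound on $\partial A_\alpha$; this is exactly the paper's comparison $\widetilde{m}_{\lambda_1(\Omega)}<\widetilde{m}_\alpha$ fed into Lemma~\ref{L050602}, and your Step~3 (passing to $|u|$ and applying the strong maximum principle with the bounded potential $\omega+|\mu|u^{p-2}$) makes explicit the positivity that the paper asserts without detail.
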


\begin{remark}
\begin{itemize}
\item[\rm (i)] Actually, for the special cases $2<p=q\le 2^*$ and $\mu\le-1$, or $1<p\le q<2+\frac{4}{N}$, the energy functional is coercive on $S_\rho$. Therefore there exists a  global minimizer for all $\rho>0$.
\item[\rm (ii)] Theorem \ref{T041801} can improve some previous results. As we introduced before,  \cite{SZarxiv25} obtained a local minimizer but  the domain is required to be star-shaped. This constraint can be removed in our Theorem. In \cite{LZ25}, the authors only dealt with the mass-supercritical case and non-negative nonlinearities. Theorem \ref{T041801} is  valid for mass-subcritical case and sign-changing nonlinearities.
\end{itemize}
\end{remark}

In virtue of Theorem \ref{T041801}, we obtain a local minimizer of $I|_{S_\rho}$. A natural problem is: could we get another positive solution with mountain pass type?  In section \ref{2506231135}, when mass $\rho$ is sufficiently small, we can construct a mountain pass structure for the functional $I$ on $S_\rho$, thus the existence of a (PS) sequence is obvious. As we mentioned before,  the main difficulty is that we could not prove the boundedness of this sequence in $H_0^1(\Omega)$. Inspired by \cite{Pierotti25}, we will use Struwe's monotonicity trick to deal with our problem with $\mu\ge 0$. At first, we consider the functional
\[
I_{\eta}(u)=\frac{1}{2}\int_{\Omega}|\nabla u|^2 \ud x
-\eta\int_{\Omega}\left(\frac{\mu}{p} |u|^p+\frac1{q}|u|^{q}\right)  \ud x,\quad u\in S_\rho\;\;\mbox{and}\;\; \eta>0.
\]
When $\eta$ is sufficiently close to $1$, $I_\eta$  possesses the same mountain pass structure with $I$. We denote by $c_\eta$ the corresponding mountain pass level, which is non-increasing and continuous from left with respect to $\eta$. Hence $c_\eta$ is almost everywhere differentiable, and thus there exists a sequence $\eta_n\to 1$ such that $c_{\eta}$ is differentiable at $\eta_n$. Since the Sobolev subcritical case is very similar to the Sobolev critical case. Therefore we just concentrate on the proof of the case $q=2^*$.

With the help of \cite[Theorem 4.5]{Ghoussoub1993} (see Proposition \ref{22072013} below), we will show in Lemma \ref{2408312351} that there exists a bounded (PS) sequence at every $c_{\eta_n}$. By bubbling estimate, provided $\rho$ is small, this (PS) sequence is convergent. We actually get  a sequence of solutions $(\omega_{n}, u_{n})$ of
\begin{equation*}
\begin{cases}
\begin{aligned}
&-\Delta u+\omega u=\eta_n\left(\mu u^{p-1}+u^{q-1}\right),~ u>0 \quad &&\text { in } \Omega, \\
&u=0  &&\text { on } \partial\Omega, \\
&\|u\|_2^2=\rho>0.
\end{aligned}
\end{cases}
\end{equation*}
Therefore,  there holds the Pohozaev identity
\begin{equation}\label{2408291040}
\frac12\int_{\partial \Omega}|\nabla u_n|^2\sigma\cdot \nu \ud\sigma=N\left[\eta_n\int_{\Omega}\left(\frac{\mu}{p} |u_n|^p+\frac1{q}|u_n|^{q}\right)\ud x-\frac{\omega_n\rho}{2}\right]-\frac{N-2}{2}\int_{\Omega}|\nabla u_n|^2 \ud x.
\end{equation}
When $\Omega$ is star-shaped, this identity can deduce that $\{u_n\}$ is bounded in $H_0^1(\Omega)$. Still the bubbling estimate can show that $\{u_n\}$ has a  convergent subsequence.

\begin{theorem}\label{T041805}
Let $\mu\geq 0$,  $\Omega$ is a star-shaped domain. If
\begin{itemize}
\item[\rm (i)] $N\ge 1$, $2<p<2+\frac{4}{N}< q< 2^*$ or $N\geq 4$, $2<p<q=2^*$.
\item[\rm (ii)] $N=3$, $p\in (2, \frac{10}{3}]\cup (\frac{14}{3}, 6)$, $q=2^*$,  and 
\begin{equation}\label{25092601}
\lambda_1(B_{R_{\Omega}})<
\begin{cases}
\frac{4}{3}\lambda_1(\Omega)\quad\; &\mbox{if}\;\; 2<p\le \frac{10}{3},\\
\left(2-\frac{8}{3(p-2)}\right)\lambda_1(\Omega)\quad\; &\mbox{if}\;\;  \frac{14}{3}< p<6,\\
\end{cases}
\end{equation}
where $R_\Omega$ is the inradius of $\Omega$. 
\end{itemize}

Then for  sufficiently small $\rho>0$, problem \eqref{041003} has at least two solutions.
\end{theorem}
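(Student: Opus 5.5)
The first solution is the local minimizer $u_0$ given by Theorem \ref{T041801}. It lies in the interior of $A_\alpha$ for a fixed $\alpha>\lambda_1(\Omega)$ and has energy $m_\alpha<\widetilde m_\alpha$. For the second solution I will set up a mountain pass structure on $S_\rho$ for the family
\[
I_\eta(u)=\tfrac12\|\nabla u\|_2^2-\eta\int_\Omega\Big(\tfrac{\mu}{p}|u|^p+\tfrac1q|u|^q\Big)\ud x ,\qquad \eta\in[\tfrac12,1].
\]
Since $\mu\ge0$ and $q>2+\frac4N$, the functional $I_\eta$ is unbounded below on $S_\rho$. Concretely, concentrating a bump inside a ball of radius $R_\Omega$ contained in $\Omega$, i.e.\ taking $t^{N/2}v(t(x-x_0))$ with $t$ large, drives the energy to $-\infty$. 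So there is $w\in S_\rho\setminus A_\alpha$ with $I_\eta(w)<m_\alpha$, uniformly in $\eta$. I define
\[
\Gamma=\{\gamma\in C([0,1],S_\rho):\gamma(0)=\rho^{1/2}\varphi_1,\ \gamma(1)=w\},\qquad c_\eta=\inf_{\Gamma}\max_{t\in[0,1]} I_\eta(\gamma(t)).
\]
Every path crosses $\partial A_\alpha$. For $\rho$ small the argument of Section \ref{S_2}, redone with $\eta$ near $1$, gives $c_\eta\ge \inf_{\partial A_\alpha}I_\eta>\max\{I_\eta(\rho^{1/2}\varphi_1),I_\eta(w)\}$. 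The map $\eta\mapsto c_\eta$ is nonincreasing, hence differentiable at a.e.\ $\eta$.

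At each differentiability point $\eta_n\to1$, I follow Struwe's argument in the form of Proposition \ref{22072013} (Ghoussoub's localized minimax principle). Taking almost optimal paths, the quotient $(I_{\eta_n}(u)-I_{\eta}(u))/(\eta-\eta_n)$ is bounded by $-c'_{\eta_n}+1$. This yields a (PS) sequence for $I_{\eta_n}|_{S_\rho}$ at level $c_{\eta_n}$ on which $\int_\Omega(\frac\mu p|u|^p+\frac1q|u|^q)$ is bounded, and therefore $\|\nabla u\|_2$ is bounded. Replacing $|u|$ on the paths, the sequence can be taken almost nonnegative. The Lagrange multipliers $\omega_k$ are bounded, because $\omega_k\rho=\int(\eta_n(\mu|u|^p+|u|^q))-\|\nabla u\|^2+o(1)$. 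In the subcritical case, compactness of the embeddings gives a strong limit $u_n\ge0$, which is positive by the maximum principle.

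In the critical case $q=2^*$ I need the level estimate $c_{\eta_n}<m_\alpha+\frac1N\eta_n^{-(N-2)/2}\mathcal S^{N/2}$, and that is the main obstacle. I will build a test path from $u_0$ (or from $\rho^{1/2}\varphi_1$, which is close in energy for small $\rho$) plus a concentrated Talenti bubble $U_\varepsilon$, renormalized in $L^2$. The interaction between the two is governed by $\int u_0U_\varepsilon^{2^*-1}\sim \varepsilon^{(N-2)/2}$, which is negative enough to beat the error terms when $N\ge4$. In $N=3$ this interaction competes with the $L^2$-correction term of order $\varepsilon$ and with the $\lambda_1$ shift. Here the $p$-term must help: for $p\le 10/3$ or $p>14/3$, the $\varepsilon$-powers of $\int U_\varepsilon^p$ dominate, and the inradius condition \eqref{25092601} handles the comparison of $\lambda_1(B_{R_\Omega})$ with $\lambda_1(\Omega)$ coming from the renormalization. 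Below this level, the bubbling (Struwe) decomposition of the bounded (PS) sequence rules out a bubble, which gives strong convergence to a solution $(\omega_n,u_n)$ at level $c_{\eta_n}$.

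Finally I pass $\eta_n\to1$. Since $\Omega$ is star-shaped, $\sigma\cdot\nu\ge0$, so the left side of \eqref{2408291040} is nonnegative. Combining the Pohozaev identity \eqref{2408291040} with $I_{\eta_n}(u_n)=c_{\eta_n}\le c_{1/2}$ and the Nehari-type relation $\|\nabla u_n\|^2+\omega_n\rho=\eta_n\int(\mu u_n^p+u_n^q)$ bounds $\|\nabla u_n\|_2$ and $\omega_n$; here $q>2+\frac4N$ makes the coefficient positive, and the $p<2+\frac4N$ term is controlled by Gagliardo–Nirenberg and the fixed mass. Then $(\omega_n,u_n)$ is a (PS) sequence for $I|_{S_\rho}$ at level $c_1$ (using left continuity of $c_\eta$). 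The same energy estimate and bubble exclusion give a strong limit $u$ with $I(u)=c_1>m_\alpha=I(u_0)$, which is therefore a second positive solution distinct from $u_0$.
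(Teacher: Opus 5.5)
Your overall architecture matches the paper: local minimizer from Theorem \ref{T041801}, mountain pass for $I_\eta$ on $S_\rho$, monotonicity trick via Proposition \ref{22072013}, bubble exclusion at each differentiability point $\eta_n$, and Pohozaev on the star-shaped domain to pass to $\eta_n\to1$. However, the compactness step in the critical case has a genuine gap.

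You aim for $c_{\eta_n}<m_\alpha+\frac1N\eta_n^{1-\frac N2}\mathcal S^{\frac N2}$ and assert that below this level the Struwe decomposition excludes a bubble. What the decomposition actually gives, as in \eqref{2511051425}, is $\liminf I_{\eta}(u_k)\ge I_{\eta}(u_0')+\frac1N\mathcal S^{\frac N2}\eta^{1-\frac N2}$. Here $u_0'$ is the weak limit, which is itself a positive solution of \eqref{2408311628} with $\|u_0'\|_2^2=\rho$. To reach a contradiction you therefore need $I_\eta(u_0')\ge m_\alpha$ (up to $o(1)$), and nothing in your plan provides this. The limit $u_0'$ need not lie in $A_\alpha$. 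Moreover, on a bounded domain there is no Pohozaev manifold on which the local-minimum level is the infimum of the energy; that is what makes such a threshold work in $\mathbb R^N$.

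What one can actually extract from the Pohozaev identity on a star-shaped domain is Lemma \ref{2509121227}: $I_\eta(u)>(\frac{\lambda_1}{N}-\delta)\rho$ if $p\le2+\frac4N$, and $I_\eta(u)>(\frac12-\frac{2}{N(p-2)})\lambda_1\rho$ if $p>2+\frac4N$. Both bounds are strictly below $m_\alpha=\frac{\lambda_1}{2}\rho+o(\rho)$. The paper plays this lower bound for the weak limit against the upper bound $c_\eta\le\frac1N\mathcal S^{\frac N2}\eta^{1-\frac N2}+h_\eta(\rho)$ of Lemma \ref{2509121247}, where $h_\eta(\rho)=o(\rho)$ for $N\ge4$. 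This energy lower bound for arbitrary solutions is the missing ingredient in your argument. You need it both at $\eta_n<1$ and in the final passage to $\eta=1$, where you reuse the same threshold.

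As a consequence, your account of the $N=3$ hypotheses does not hold up. The paper drops the $p$-term in the test-function estimate altogether, using $\mu\ge0$. Your assertion that for $p\le\frac{10}{3}$ or $p>\frac{14}{3}$ \emph{the $\varepsilon$-powers of $\int U_\varepsilon^p$ dominate} is not derived, and it is not where these thresholds come from. They arise from comparing the $N=3$ error term $\frac14\lambda_1(B_{R_\Omega})\rho$ in $h_\eta$ with the lower bounds above. For $p\le\frac{10}{3}$ one needs $\frac14\lambda_1(B_{R_\Omega})<\frac13\lambda_1(\Omega)$. For $p>\frac{10}{3}$ one needs $\frac14\lambda_1(B_{R_\Omega})<(\frac12-\frac{2}{3(p-2)})\lambda_1(\Omega)$, which is possible only for $p>\frac{14}{3}$ because $\lambda_1(B_{R_\Omega})\ge\lambda_1(\Omega)$. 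This is exactly \eqref{25092601}.

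A smaller point: in your last step you control the $p$-term by Gagliardo--Nirenberg assuming $p<2+\frac4N$. That does not cover the critical case $N\ge4$, $2+\frac4N\le p<2^*$, which is included in (i). Proposition \ref{2505231848} instead first bounds $\omega_n$ by contradiction and then uses the Nehari identity, which works for every $2<p<2^*$.
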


\begin{remark}
\begin{itemize}
\item[\rm (i)] Note that for the case $\mu=0$, up to a scaling, Pierotti, Verzini and Yu \cite{Pierotti25}  used the mass $\rho$ as a parameter in monotonicity trick. However, for the combined  nonlinearities, we need to add an extra parameter $\eta$ to take use of  monotonicity trick. Then the mass $\rho$ and $\eta$ shall be controlled simultaneously in constructing mountain pass structure. On the other hand, the estimates for the energy of solutions to \eqref{041003} is also very different, see for example, Lemma \ref{2509121227}. 

\item[\rm (ii)] Compared with \cite{SZarxiv25}, Theorem \ref{T041805} can obtain two positive solutions for all  $N\ge 3$, however the same result in \cite{SZarxiv25} holds  only for $N\in \{3, 4, 5\}$. Hence we improve the results of \cite{SZarxiv25}.

\item[\rm (iii)]  When $N=3$, we just derive the existence for $p\in (2, \frac{10}{3}]\cup (\frac{14}{3}, 6)$, this may be a technical restriction. The condition \eqref{25092601} can be satisfied if $\Omega$ is a small perturbation of balls in for example $C^1$ topology.
\end{itemize}
\end{remark}

In the following, we will consider the problem \eqref{041003} with  $q\ge 2^*$ in a convex domain. Assume $u$ is a solution of \eqref{041003}. Recall the moving-plane method in \cite{GNN1979}, if all sectional curvatures at every point of $\partial\Omega$ are positive, we can use moving-plane method near the boundary. As de Figueiredo,  Lions and Nussbaum \cite{dLN1982} said, the moving-plane method is still valid for general convex domains. We will see in Section \ref{S_4} that $u$ is monotonic along some directions around the boundary $\partial\Omega$. For the sake of the nonexistence of solutions to \eqref{041003}, we first estimate the range of the frequency $\omega$ in Lemma \ref{2410040137}. Thanks to the monotonicity of $u$ near the boundary, when $\rho$ is large enough, the mass will be concentrated on the center of $\Omega$. This can deduce that $\omega$ will converge to infinity as $\rho\to\infty$, which will reach a contradiction.
\begin{theorem}\label{T051401}
Let $1<p\le 2$, $q\ge \max\{2^*, 3\}$, $\mu> 0$ and $\Omega$ is a bounded convex domain with smooth boundary. Then problem \eqref{041003} has no positive normalized solution provided that $\rho>0$ is sufficiently large.
\end{theorem}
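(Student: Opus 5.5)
We argue by contradiction. Suppose there are $\rho_n\to\infty$ and positive solutions $(\omega_n,u_n)$ of \eqref{041003} with $\|u_n\|_2^2=\rho_n$. The plan has three steps. First, bound the frequency $\omega_n$ from above in terms of $\|u_n\|_\infty$. Second, use the moving-plane method near $\partial\Omega$ to show that the mass of $u_n$ sits in a fixed interior compact set. Third, show that this forces $\omega_n\to-\infty$ while $\|u_n\|_\infty\to\infty$, and derive a contradiction from the equation at the maximum point.

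\textbf{Step 1 (range of $\omega$).} Test the equation against $\varphi_1$:
\[
(\lambda_1(\Omega)+\omega)\int_\Omega u\varphi_1\,\ud x=\int_\Omega(\mu u^{p-1}+u^{q-1})\varphi_1\,\ud x>0 .
\]
Hence $\omega>-\lambda_1(\Omega)$. At a maximum point $x_0$ of $u$ we have $-\Delta u(x_0)\ge 0$, so
\[
\omega\le \mu M^{p-2}+M^{q-2},\qquad M=\|u\|_\infty .
\]
Here $p\le 2$ matters: the term $\mu M^{p-2}$ stays bounded for $M\ge1$ (for $p=2$ it is the constant $\mu$). Since $\rho_n\le|\Omega|M_n^2$, we get $M_n\to\infty$.

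\textbf{Step 2 (moving planes).} By the Gidas--Ni--Nirenberg argument as adapted to convex domains by de Figueiredo--Lions--Nussbaum, there are $\gamma,\delta>0$ depending only on $\Omega$ with the following property. For every $x\in\Omega_\delta:=\{x\in\Omega:\operatorname{dist}(x,\partial\Omega)<\delta\}$ there is a measurable set $I_x\subset\Omega\setminus\Omega_\delta$ with $|I_x|\ge\gamma$ and $u(x)\le u(y)$ for all $y\in I_x$. This works because the nonlinearity $f(u)=\mu u^{p-1}+u^{q-1}-\omega u$ does not depend on $x$. Since $f$ may fail to be Lipschitz at $0$ when $p<2$, I will run the moving plane only near the boundary, using the Hopf lemma there.

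Consequently
\[
\int_{\Omega_\delta}u^2\,\ud x\le \frac{|\Omega_\delta|}{\gamma}\int_{\Omega\setminus\Omega_\delta}u^2\,\ud x ,
\]
so $\int_{\Omega\setminus\Omega_\delta}u^2\ge c\rho$ and $\|u\|_{L^\infty(\Omega_\delta)}\le C\rho^{1/2}$. In words, the mass does not escape to the boundary.

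\textbf{Step 3 (contradiction).} Choose a cutoff $\psi$ supported in $\Omega_\delta$ with $\psi=1$ near $\partial\Omega$. On the support of $\psi$, $u$ is bounded by $C\rho^{1/2}$ by Step 2. Testing the equation with $\varphi_1$, and splitting the integrals into the interior part and the part near the boundary, I aim to show that $\omega_n\to+\infty$. I intend to use the energy or Pohozaev identity on the convex (hence star-shaped) domain, where the boundary term is nonnegative, and $q\ge 2^*$ enters here because the Pohozaev combination $\frac{N}{q}-\frac{N-2}{2}\le0$ has a favorable sign. This should give
\[
\frac{\omega_n\rho_n}{2}\ \ge\ \mu N\Big(\frac{1}{N}-\frac12+\frac1p\Big)\int u_n^p-\dots,
\]
relating $\omega_n$ to $\|u_n\|_{L^p}^p/\rho_n$, which stays bounded for $p\le 2$. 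The precise route is uncertain. Once $\omega_n$ is large, the comparison $-\Delta u\ge(\omega-\mu u^{p-2}-u^{q-2})u$ together with the interior mass bounds should contradict Step 1 or the moving-plane bound. I expect $q\ge3$ to be used in the interior estimate of $u^{q-1}$ against $u^2$ when converting $L^\infty$ control to $L^2$ control.

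\textbf{Main obstacle.} The hard step is the last one: turning the boundary monotonicity and the Pohozaev identity into the growth $\omega_n\to\infty$, in a way consistent with the pointwise bound of Step 1 so that a contradiction actually results. I would first try the Pohozaev identity with the boundary term estimated from below using the bound $u\le C\rho^{1/2}$ on $\Omega_\delta$ and elliptic gradient estimates there.
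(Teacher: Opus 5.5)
\textbf{There is a genuine gap: Step~3 never produces a contradiction, and the mechanism you sketch for it cannot work.} Your ingredients are the right ones. These are the $\varphi_1$ test, the moving-plane localization of mass (exactly the paper's Lemma~\ref{2506192013}), and the Pohozaev identity with the favorable sign of $\frac Nq-\frac{N-2}{2}$ for $q\ge 2^*$.

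The maximum-point bound $\omega\le\mu M^{p-2}+M^{q-2}$ from Step~1 is compatible with $\omega_n\to\infty$, since $M_n\to\infty$. So ``once $\omega_n$ is large'' contradicts nothing in Step~1, and the target $\omega_n\to-\infty$ in your overview points the wrong way.

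More importantly, your Pohozaev display has the inequality reversed. On a convex domain containing $0$ we have $\frac12\int_{\partial\Omega}|\nabla u|^2\,x\cdot\nu\,\ud\sigma\ge0$. Combining Pohozaev with the Nehari identity $\|\nabla u\|_2^2+\omega\rho=\mu\|u\|_p^p+\|u\|_q^q$ then gives
\[
\omega\rho\le N\Big(\frac1p-\frac12+\frac1N\Big)\mu\|u\|_p^p+\Big(\frac Nq-\frac{N-2}{2}\Big)\|u\|_q^q\le N\Big(\frac1p-\frac12+\frac1N\Big)\mu|\Omega|^{\frac{2-p}{2}}\rho^{\frac p2}.
\]
This is an \emph{upper} bound $\omega\le C\rho^{\frac{p-2}{2}}$, which stays bounded as $\rho\to\infty$ precisely because $p\le2$; it is the paper's Lemma~\ref{2410040137}. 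Only the sign of the boundary term is used, so no gradient estimates near $\partial\Omega$ are needed.

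\textbf{The missing idea is that the growth of $\omega$ comes from the $\varphi_1$ test alone, using only the interior $L^2$ mass and no $L^\infty$ control.} Let $K=\Omega\setminus\Omega_\delta$ and $m=\min_K\varphi_1>0$, and recall $\int_K u^2\,\ud x\ge c\rho$ from Step~2. Drop the term $\mu u^{p-1}\varphi_1\ge0$. Use $\lambda_1(\Omega)+\omega>0$ and H\"older on $K$ with $q-1\ge2$; this, not an $L^\infty$-to-$L^2$ conversion, is where $q\ge3$ enters. Then
\[
(\lambda_1(\Omega)+\omega)\|\varphi_1\|_2\,\rho^{\frac12}\ge(\lambda_1(\Omega)+\omega)\int_\Omega u\varphi_1\,\ud x\ge m\int_K u^{q-1}\,\ud x\ge m|K|^{\frac{3-q}{2}}(c\rho)^{\frac{q-1}{2}},
\]
so $\lambda_1(\Omega)+\omega\ge c'\rho^{\frac{q-2}{2}}$, with $c'$ depending only on $\Omega$, $N$, $q$. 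Since $q>2\ge p$, this lower bound is incompatible with the upper bound for $\rho$ large. That two-sided squeeze is the paper's proof. The cutoff $\psi$, the bound $\|u\|_{L^\infty(\Omega_\delta)}\le C\rho^{1/2}$ and the maximum-point estimate can all be discarded.
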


\begin{remark}
When $p=2$ and $q=2^*$, this problem is the classical Br\'ezis-Nirenberg type problem. Many results (see for instance \cite{Pierotti25, SZarxiv25, CLY2025}) only obtain the existence for small mass $\rho$. It seems that there is no literature concerning nonexistence of positive normalized  solutions for Sobolev critical and supercritical problem. Notice that $2^*\ge 3$ means $3\le N\le 6$, thus Theorem \ref{T051401} can deduce the nonexistence result of Br\'ezis-Nirenberg type problem for $3\le N\le 6$. 
\end{remark}

Theorem \ref{T051401} deduce that the nonexistence result for Br\'ezis-Nirenberg type problem only holds in $3\le N\le 6$. We would like to remark that this may be a technical restriction. When the parameters $\mu= 0$ and $q=2^*$, for a general bounded convex domain, it is an open problem in this paper whether there is no any positive solution of \eqref{041003} for  $N\ge 7$ and sufficiently large mass. This problem may be expected to be true because we will prove a dichotomy result for this problem in a ball $\Omega=B_1$, that is
\begin{theorem}\label{2505261703}
Assume that $N\ge 3$, $\mu=0$, $q=2^*$ and $\Omega$ is a ball, then there is some $\rho^*>0$ such that
\begin{itemize}
\item[\rm (i)] \eqref{041003} has at least two solutions if $0<\rho<\rho^*$;
\item[\rm (ii)] \eqref{041003} has at least one solution if $\rho=\rho^*$;
\item[\rm (iii)] \eqref{041003} has no solutions if $\rho>\rho^*$.
\end{itemize}
\end{theorem}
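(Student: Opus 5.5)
By Gidas--Ni--Nirenberg, every positive solution of \eqref{041003} on $\Omega=B_1$ is radial and radially decreasing. So the plan is to study the radial Br\'ezis--Nirenberg problem
\[
-\Delta u+\omega u=u^{2^*-1},\quad u>0 \text{ in } B_1,\quad u=0 \text{ on } \partial B_1,
\]
with $\omega$ regarded as a free parameter. The idea is to parametrize all positive radial solutions by a single curve and read off the mass along it.

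\textbf{Step 1: solution set.} A positive solution requires $\omega>-\lambda_1(B_1)$; test the equation against $\varphi_1$. For $N\ge 4$ the Br\'ezis--Nirenberg theory gives existence exactly for $\omega\in(-\lambda_1,0)$, and Pohozaev rules out $\omega\ge 0$. For $N=3$ one gets existence exactly for $\omega\in(-\lambda_1,-\lambda_1/4)$.

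\textbf{Step 2: uniqueness and a continuous branch.} For each admissible $\omega$ the positive radial solution $u_\omega$ is unique and nondegenerate. I will use the known results of Srikanth, Adimurthi--Yadava and Zhang for the critical case in the ball, and invoke them rather than reprove them. Nondegeneracy plus the implicit function theorem then make $\omega\mapsto u_\omega$ a continuous (indeed $C^1$) curve in $H_0^1$, and hence
\[
M(\omega):=\|u_\omega\|_2^2
\]
is continuous on the admissible interval $(-\lambda_1,\omega_0)$, where $\omega_0=0$ for $N\ge 4$ and $\omega_0=-\lambda_1/4$ for $N=3$. By Step 1, the set of masses of positive solutions is exactly the range of $M$. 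The theorem therefore reduces to showing that this range is $(0,\rho^*]$ with $\rho^*=\max M$, and that every value in $(0,\rho^*)$ is attained at least twice.

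\textbf{Step 3: endpoint behaviour.} At $\omega\to-\lambda_1^+$ the solutions bifurcate from zero, $u_\omega\approx t\varphi_1$ with $t\to 0$, so $M\to0$.

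At $\omega\to\omega_0^-$ the solutions blow up: $\|u_\omega\|_\infty\to\infty$, and $u_\omega$ concentrates as an Aubin--Talenti bubble $U_\varepsilon$. I need $M(\omega)\to 0$ along this end as well.
\begin{itemize}
\item For $N\ge5$ this follows from $\|U_\varepsilon\|_2^2\sim\varepsilon^2\to0$, together with the bubble-outside estimate $u\lesssim \varepsilon^{(N-2)/2}|x|^{2-N}$ obtained from the asymptotic analysis of Atkinson--Peletier / Han.
\item For $N=4$, $\|U_\varepsilon\|_2^2\sim\varepsilon^2|\log\varepsilon|\to0$.
\item For $N=3$ the limit $\omega\to-\lambda_1/4$ is not at $0$, and the tail of $u_\omega$ behaves like $\varepsilon^{1/2}G_\omega(x,0)$, whose $L^2$ norm also tends to $0$.
\end{itemize}
This is the main obstacle: obtaining uniform pointwise control of $u_\omega$ away from the blow-up point, of the form $u_\omega\le C\|u_\omega\|_\infty^{-1}|x|^{2-N}$. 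I would derive it through the radial ODE and Emden--Fowler comparison, as in Atkinson--Peletier.

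\textbf{Step 4: conclusion.} $M$ is continuous, positive, and tends to $0$ at both ends, so it attains a maximum $\rho^*>0$.
\begin{itemize}
\item For $\rho<\rho^*$, the intermediate value theorem on each side of a maximizer gives two distinct $\omega$, hence two solutions; these are distinct because $\omega$ is determined by $u$. This proves (i).
\item $\rho=\rho^*$ is attained, which proves (ii).
\item For $\rho>\rho^*$ there is nothing, since every positive solution lies on the curve. This proves (iii).
\end{itemize}
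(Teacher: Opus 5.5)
Your proposal is correct and has the same skeleton as the paper. You identify the positive solutions with the unique radial Br\'{e}zis--Nirenberg solutions $u_\lambda$, $\lambda=-\omega$. You show that $\lambda\mapsto\|u_\lambda\|_2^2$ is continuous on the admissible interval and tends to $0$ at both ends, and you conclude by the intermediate value theorem.

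The differences lie in how the two analytic inputs are obtained. For continuity you use nondegeneracy and the implicit function theorem, whereas the paper appeals to continuous dependence for the radial ODE. Your version is cleaner but needs nondegeneracy as an extra input; uniqueness plus a compactness argument would already suffice.

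The real divergence is at the blow-up end. You single this out as the main obstacle and plan to treat it with uniform pointwise bubble bounds $u_\omega\le C\|u_\omega\|_\infty^{-1}|x|^{2-N}$ in the style of Atkinson--Peletier and Han, plus delicate asymptotics as $\omega\to-\lambda_1/4$ when $N=3$. The paper needs none of this, because the unique solution is the Br\'{e}zis--Nirenberg minimizer and is therefore uniformly bounded in $H_0^1$. (The paper writes it as $u_\lambda=\varepsilon^{\frac{1}{2^*-2}}v_\varepsilon/\|v_\varepsilon\|_{2^*}$ with $\lambda=\theta_\varepsilon$.) For instance, set $S_\lambda:=\inf_{u\neq0}(\|\nabla u\|_2^2-\lambda\|u\|_2^2)/\|u\|_{2^*}^2\le\mathcal{S}$. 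The Nehari identity and H\"older then give $\|\nabla u_\lambda\|_2^2=S_\lambda^{N/2}+\lambda\|u_\lambda\|_2^2\le\mathcal{S}^{N/2}+\lambda_1|B_1|^{2/N}\mathcal{S}^{(N-2)/2}$.

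Hence along any sequence approaching an endpoint, $u_\lambda$ converges weakly to a nonnegative solution of the limiting equation. That limit must vanish:
\begin{itemize}
\item at $\lambda=0$, by Pohozaev;
\item at $\lambda=\lambda_1/4$ when $N=3$, by Br\'{e}zis--Nirenberg nonexistence;
\item at $\lambda=\lambda_1$, by testing against $\varphi_1$.
\end{itemize}
The compact embedding $H_0^1\hookrightarrow L^2$ then yields $\|u_\lambda\|_2\to0$.

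What your route buys is quantitative rates in the bubble scale (of order $\varepsilon^2$, $\varepsilon^2|\log\varepsilon|$, $\varepsilon$). These would matter for studying the shape of the mass curve. For the dichotomy itself, though, they replace a short compactness argument with much deeper asymptotic results.
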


\smallskip
In fact, when $\mu=0$ and $\Omega=B_1(0)$, the problem \eqref{041003} can be reduced to consider the tendency of $L^2$-norm of the unique solution to the classical Br\'{e}zis-Nirenberg problem
\begin{equation}\label{060909}
\begin{cases}
-\Delta u-\lambda u=u^{2^{*}-1},~ u>0 \quad\text { in } B_1(0), \\
u\in H^1_0(B_1(0)).
\end{cases}
\end{equation}
A well-known result in \cite{Brezis83} said
\begin{itemize}
\item[(i)] when $N>3$, \eqref{060909} has a solution if and only if $\lambda\in\big(0,\lambda_1(B_1(0))\big)$;
\item[(ii)] when $N=3$, \eqref{060909} has a solution if and only if $\lambda\in\left(\frac{\lambda_1(B_1(0))}{4},\lambda_1(B_1(0))\right)$.
\end{itemize}
Another important argument for us is that the solution of \eqref{060909} is unique and radially symmetric, which was proved in \cite{Zhang92}. Thus the solution of \eqref{060909} is equivalent to a solution of an ordinary differential equation. By the continuity theorem of solution on initial condition and parameter, the $L^2$-norm of solution $u_\lambda$ to \eqref{060909} is continuous with respect to $\lambda$. In virtue of \cite{Zhang92}, we can show in Lemma \ref{060921} that when $N>3$, $\|u_\lambda\|_2\to 0$ as $\lambda\to 0$ and $\lambda\to \lambda_1(B_1(0))$, and when $N=3$, $\|u_\lambda\|_2\to 0$ as $\lambda\to {\frac{\lambda_1(B_1(0))}{4}}$ and $\lambda\to \lambda_1(B_1(0))$. These arguments can prove Theorem \ref{2505261703}.

\begin{remark}
Our results above provide complete answers for the normalized solutions of Br\'{e}zis-Nirenberg problem in a unit ball. However, when $\Omega$ is not a ball, our method will be invalid since there might be multiple solutions for fixed $\lambda$ if the topology of $\Omega$ is not simple. 
\end{remark}

\medskip
Our paper is organized as follows. In Section \ref{S_2}, we are devoted to proving the existence of local minimizer. Section \ref{2506231135} is dedicated to establishing the mountain pass structure and using monotonicity trick to show the existence of the second positive solution. In Section \ref{S_4}, we consider the nonexistence issue for the case that $1<p\le 2$, $q\ge \max\{2^*, 3\}$ and $\Omega$ is convex domain. In Section \ref{S_5}, we will give a dichotomy result for normalized solutions for Br\'{e}zis-Nirenberg problem in a ball.

\section{Existence~of~local~minimizer}\label{S_2}
In this section, we are mainly focused on the existence of minimizer for $m_\alpha$ given in \eqref{2505261720}. At the beginning, we recall some useful inequalities. First, for every $N \geq 3$, there exists an optimal constant $\mathcal{S}$ depending only on $N$, such
that
\begin{equation}\label{050501}
\begin{aligned}
\mathcal{S}\|u\|^{2}_{L^{2^{*}}(\mathbb{R}^N)} \leq\|\nabla u\|^{2}_{L^{2}(\mathbb{R}^N)}\quad \forall\, u \in D^{1,2}(\mathbb{R}^{N}), \quad \text{ (Sobolev inequality) }
\end{aligned}
\end{equation}
where $D^{1,2}(\mathbb{R}^{N})$ denotes the completion of $C^{\infty}_{c}(\mathbb{R}^{N})$ with respect to the norm $\|u\|_{D^{1,2}(\mathbb{R}^{N})}:=\|\nabla u\|_{L^{2}(\mathbb{R}^{N})}$. For $N \geq 1$, $2\leq q< 2^{*}$ and $u \in H^{1}(\mathbb{R}^N)$,
\begin{equation}\label{050502}
\begin{aligned}
\|u\|_{L^q(\mathbb{R}^N)}\leq \mathcal{C}_{N,q}\|\nabla u\|^{\gamma_{q}}_{L^{2}(\mathbb{R}^{N})}
\|u\|^{1-\gamma_{q}}_{L^{2}(\mathbb{R}^{N})}, \quad \text{ (Gagliardo-Nirenberg inequality) }
\end{aligned}
\end{equation}
where $\gamma_{q}:=\frac{N(q-2)}{2q}$. We emphasize that \eqref{050501} and \eqref{050502} also hold
in $H^1_0(\Omega)$, for any bounded domain $\Omega$, with the same constant $\mathcal{C}_{N,q}$.

\begin{proposition}\label{P050601}
Let $N\geq 3$, $2<p<q=2^*$, $\mu\in \mathbb{R}$ and $\alpha>\lambda_{1}(\Omega)$. If \eqref{2505271451} holds, then any minimizing sequence associated to $m_{\alpha}$ {\rm(}given in \eqref{2505261720}{\rm)} is relatively compact in $A_{\alpha}$. In particular, $m_{\alpha}$ can be achieved.
\begin{proof}
Let $\{u_n\}\subset H^1_0(\Omega)$ be a minimizing sequence for $m_{\alpha}$. That is,
\begin{equation}\label{040503}
\begin{cases}
\|u_n\|^{2}_{L^{2}(\Omega)}=\rho, \\
\|\nabla u_n\|^{2}_{L^{2}(\Omega)}\leq\rho\alpha, \\
m_{\alpha}\leq I(u_n)\leq m_{\alpha}+o(1)
\end{cases}
\end{equation}
as $n \rightarrow \infty$. Up to a subsequence, there exists $u_0\in H^1_0(\Omega)$ such that
\begin{equation*}
\begin{cases}
u_n\rightharpoonup u_0 \text{ in } H^1_0(\Omega), \\
\|\nabla u_0\|^{2}_{L^{2}(\Omega)}\leq\liminf\limits_{n\rightarrow \infty}\|\nabla u_n\|^{2}_{L^{2}(\Omega)}, \\
\|u_0\|^{2}_{L^{2}(\Omega)}=\rho.
\end{cases}
\end{equation*}
So, $u_0\in A_{\alpha}$ and $I(u_0)\geq m_{\alpha}$.

Let $\tilde{u}_n=u_n-u_0$. Then, we extract a subsequence, still denoted by $\{\tilde{u}_n\}$, such that
\begin{equation}\label{2505211318}
\begin{aligned}
\tilde{u}_n&\rightharpoonup 0 ~~\text{ in } H^1_0(\Omega), \\
\tilde{u}_n&\rightarrow 0 ~~\text{ in } L^{r}(\Omega) \text{ for } r\in[1,2^{*}),\\
\tilde{u}_n&\rightarrow 0 ~~\text{ a.e.}\text{ in }\Omega
\end{aligned}
\end{equation}
as $n \rightarrow \infty$. By weak convergence and Br\'{e}zis-Lieb lemma, we deduce that
\begin{equation}\label{040504}
\begin{aligned}
\|\nabla (\tilde{u}_n+u_0)\|^2_{L^{2}(\Omega)}
&=\|\nabla \tilde{u}_n\|^2_{L^{2}(\Omega)}+\|\nabla u_0\|^2_{L^{2}(\Omega)}+o(1), \\
\|\tilde{u}_n+u_0\|^{2^{*}}_{L^{2^{*}}(\Omega)}
&=\|\tilde{u}_n\|^{2^{*}}_{L^{2^{*}}(\Omega)}+\|u_0\|^{2^{*}}_{L^{2^{*}}(\Omega)}+o(1).
\end{aligned}
\end{equation}
By \eqref{040504}, we get
\begin{equation*}
\begin{aligned}
I(u_n)&=\frac{1}{2}\int_{\Omega}|\nabla u_n|^2 \ud x
-\frac{\mu}{p}\int_{\Omega}|u_n|^{p} \ud x
-\frac{1}{2^{*}}\int_{\Omega}|u_n|^{2^{*}} \ud x\\
&=I(\tilde{u}_n)+I(u_0)+o(1).
\end{aligned}
\end{equation*}
Using \eqref{040503} and $I(u_0)\geq m_{\alpha}$, we have $I(\tilde{u}_n)\leq o(1)$ as $n \rightarrow \infty$. From \eqref{050501} and \eqref{2505211318}, it follows that
\begin{equation}\label{050604}
\begin{aligned}
\int_{\Omega}|\nabla \tilde{u}_n|^2 \ud x
&\leq\frac{2\mu}{p}\int_{\Omega}|\tilde{u}_n|^{p} \ud x
+\frac{2}{2^{*}}\int_{\Omega}|\tilde{u}_n|^{2^{*}} \ud x+o(1)\\
&\le\frac{2}{2^{*}}\mathcal{S}^{-\frac{2^{*}}{2}}\|\nabla \tilde{u}_n\|^{2^{*}}_{L^{2}(\Omega)}+o(1).
\end{aligned}
\end{equation}
Since $\{\|\nabla \tilde{u}_n\|^{2}_{L^{2}(\Omega)}\}$ is a bounded sequence, up to a subsequence, we distinguish the two cases
\begin{equation*}
\begin{aligned}
\text{either }\mathrm{(a)}~~\|\nabla \tilde{u}_n\|^{2}_{L^{2}(\Omega)}\rightarrow 0 \quad
\text{ or } \mathrm{(b)}~~\|\nabla \tilde{u}_n\|^{2}_{L^{2}(\Omega)}\rightarrow \ell>0.
\end{aligned}
\end{equation*}
If $\mathrm{(b)}$ holds,
%\begin{equation*}
%\begin{aligned}
%\|\nabla \tilde{u}_n\|^{2}_{L^{2}(\Omega)}
%\geq\Big(\frac{2^{*}}{2}\mathcal{S}^{\frac{2^{*}}{2}}\Big)^{\frac{2}{2^{*}-2}}+o(1).
%\end{aligned}
%\end{equation*}
by \eqref{040503}-\eqref{050604} and $\|\nabla u_0\|^{2}_{L^{2}(\Omega)}\geq\lambda_{1}(\Omega)\rho$ with $u_0\in S_{\rho}$, we get that
\begin{equation*}
\begin{aligned}
\Big(\frac{2^{*}}{2}\mathcal{S}^{\frac{2^{*}}{2}}\Big)^{\frac{2}{2^{*}-2}}
\leq\|\nabla \tilde{u}_n\|^{2}_{L^{2}(\Omega)}+o(1)
&=\|\nabla u_n\|^{2}_{L^{2}(\Omega)}
-\|\nabla u_0\|^{2}_{L^{2}(\Omega)}+o(1)\\
&\leq(\alpha-\lambda_{1}(\Omega))\rho+o(1),
\end{aligned}
\end{equation*}
which contradicts \eqref{2505271451}. Thus, $\|\nabla \tilde{u}_n\|^{2}_{L^{2}(\Omega)}\rightarrow 0$. That is, $u_n\rightarrow u_0$ in $H^1_0(\Omega)$ and $I(u_0)=m_{\alpha}$.
\end{proof}
\end{proposition}
\begin{lemma}\label{L050602}
Let $2<p<q\le 2^*$, $\mu\in \mathbb{R}$. Assume that there exist $\alpha_2>\alpha_1\ge\lambda_1(\Omega)$ such that 
\begin{equation}\label{050603}
\begin{aligned}
 \widetilde{m}_{\alpha_1}<\widetilde{m}_{\alpha_2}.
\end{aligned}
\end{equation}
If $N\ge 3$ and $q=2^*$, we need the extra assumption that \eqref{2505271451} holds with $\alpha= \alpha_2$. Then $m_{\alpha_2}<\widetilde{m}_{\alpha_2}$, and $m_{\alpha_2}$ can be achieved by a positive solution of problem \eqref{041003}.
\begin{proof}
By \eqref{050603}, we have
\begin{equation}\label{2510122022}
\begin{aligned}
m_{\alpha_2} = \inf\{\widetilde{m}_{\alpha}:\lambda_1(\Omega)\leq \alpha \leq \alpha_2\}
\leq \widetilde{m}_{\alpha_1}
<\widetilde{m}_{\alpha_2}.
\end{aligned}
\end{equation}
When $q<2^*$, since  the embeddings $H_0^1(\Omega)\subset L^p(\Omega)$ and $H_0^1(\Omega)\subset L^q(\Omega)$ are compact, it is clear that there exists some $u_0\in A_{\alpha_2}$ such that $I(u_0)=m_{\alpha_2}$. When $q=2^*$, from Proposition \ref{P050601}, we know that $m_{\alpha_2}$ can  be also achieved by some $u_0\in A_{\alpha_2}$. Next, it follows from \eqref{2510122022} that $u_0\in A_{\alpha_2}\setminus \partial A_{\alpha_2}$.
\end{proof}
\end{lemma}
\begin{proof}[\bf Proof of Theorem \ref{T041801}]
By $\sqrt{\rho}\varphi_1 \in \partial A_{\lambda_1(\Omega)}$,  then we have
%$\widetilde{m}_{\lambda_1(\Omega)}\leq \frac{\rho}{2}\lambda_1(\Omega)$.
\begin{equation}\label{2505211445}
\begin{aligned}
\widetilde{m}_{\lambda_1(\Omega)}\leq I(\sqrt{\rho}\varphi_1)
=\frac{\lambda_1(\Omega)}{2}\rho
-\frac{\mu}{p}\|\varphi_1\|^p_{L^{p}(\Omega)}\rho^{\frac{p}{2}}
-\frac{1}{2^{*}}\|\varphi_1\|^{q}_{L^{q}(\Omega)}\rho^{\frac{q}{2}}.
\end{aligned}
\end{equation}
This proof will be split into three cases.
\smallskip

{\it Case $N\ge 3$, $2<p<q=2^*$}. For $u\in \partial A_{\alpha}$, by Sobolev inequality and Gagliardo-Nirenberg inequality, it holds that
\begin{equation}\label{25052719099}
\begin{aligned}
I(u)&=\frac{1}{2}\int_{\Omega}|\nabla u|^2 \ud x
-\frac{\mu}{p}\int_{\Omega}|u|^{p} \ud x
-\frac{1}{2^{*}}\int_{\Omega}|u|^{2^{*}} \ud x\\
&\geq \frac{1}{2}\int_{\Omega}|\nabla u|^2 \ud x
-\frac{|\mu|}{p}\mathcal{C}^{p}_{N,p}\|\nabla u\|^{p\gamma_{p}}_{L^{2}(\Omega)}
\|u\|^{p(1-\gamma_{p})}_{L^{2}(\Omega)}
-\frac{1}{2^{*}}\mathcal{S}^{-\frac{2^{*}}{2}}\|\nabla u\|^{2^{*}}_{L^{2}(\Omega)}\\
&=\frac{\alpha}{2}\rho
-\frac{|\mu|}{p}\mathcal{C}^{p}_{N,p}\alpha^{\frac{p\gamma_{p}}{2}}\rho^{\frac{p}{2}}
-\frac{1}{2^{*}}\mathcal{S}^{-\frac{2^{*}}{2}}\alpha^{\frac{2^{*}}{2}}\rho^{\frac{2^{*}}{2}}.
\end{aligned}
\end{equation}
Thus, we get that
\begin{equation}\label{2505211446}
\begin{aligned}
\widetilde{m}_{\alpha}\geq
\frac{\alpha}{2}\rho
-\frac{|\mu|}{p}\mathcal{C}^{p}_{N,p}\alpha^{\frac{p\gamma_{p}}{2}}\rho^{\frac{p}{2}}
-\frac{1}{2^{*}}\mathcal{S}^{-\frac{2^{*}}{2}}\alpha^{\frac{2^{*}}{2}}\rho^{\frac{2^{*}}{2}}.
\end{aligned}
\end{equation}

For each $\alpha>\lambda_1(\Omega)>0$, we consider the function
\begin{equation*}
\begin{aligned}
\hat{f}_{\alpha}(\rho)
=&\frac{\alpha-\lambda_1(\Omega)}{2}\rho
+{\frac{1}{p}\left(\mu\|\varphi_1\|^p_{L^{p}(\Omega)}-|\mu|\mathcal{C}^{p}_{N,p}\alpha^{\frac{p\gamma_{p}}{2}}\right)\rho^{\frac{p}{2}}}
+\frac{1}{2^{*}}\big(\|\varphi_1\|^{2^{*}}_{L^{2^{*}}(\Omega)}
-\mathcal{S}^{-\frac{2^{*}}{2}}\alpha^{\frac{2^{*}}{2}}\big)\rho^{\frac{2^{*}}{2}}.\\
\end{aligned}
\end{equation*}
By $\alpha>\lambda_1(\Omega)$ and $2<p<2^*$, we deduce that there exists $\rho_0=\rho_0(N,p,\Omega, \mu, \alpha)>0$ such that  for any $\rho\in(0,\rho_0)$, $\hat{f}_{\alpha}(\rho)>0$.
Thus, it follows from \eqref{2505211445}, \eqref{2505211446} that
%Thus, for $\rho>0$ sufficiently small, we have
\begin{equation*}
\begin{aligned}
\widetilde{m}_{\alpha}
&\geq
\frac{\alpha}{2}\rho
-\frac{|\mu|}{p}\mathcal{C}^{p}_{N,p}\alpha^{\frac{p\gamma_{p}}{2}}\rho^{\frac{p}{2}}
-\frac{1}{2^{*}}\mathcal{S}^{-\frac{2^{*}}{2}}\alpha^{\frac{2^{*}}{2}}\rho^{\frac{2^{*}}{2}}\\
&>\frac{\lambda_1(\Omega)}{2}\rho
-\frac{\mu}{p}\|\varphi_1\|^p_{L^{p}(\Omega)}\rho^{\frac{p}{2}}
-\frac{1}{2^{*}}\|\varphi_1\|^{2^{*}}_{L^{2^{*}}(\Omega)}\rho^{\frac{2^{*}}{2}}
\geq\widetilde{m}_{\lambda_1(\Omega)},\quad \forall\,\rho\in(0,\rho_0).
\end{aligned}
\end{equation*}
Using Lemma \ref{L050602}, we get that $m_{\alpha}$ is achieved by a positive solution of problem \eqref{041003}.

\smallskip
{\it Case $N\ge 1$, $2<p<q<2^*$}. In this case,  the Sobolev inequality can be replaced by Gagliardo-Nirenberg inequality, all steps above can proceed.

\smallskip
{\it Case $N\ge 1$, $2<p=q\le 2^*$}. If $\mu>-1$,  this situation is identical to the above cases with $\mu=0$. If $\mu\le -1$, the associated energy is coercive, and it is simple to obtain a global minimizer. Moreover, the global minimizer exists for all $\rho>0$.
\end{proof}

\section{Existence of second positive solution with mountain pass type}\label{2506231135}
In this section, we will assume $\mu\ge 0$. Our aim is to find the second positive solution to \eqref{041003}. Theorem \ref{T041801} has given a local minimizer. The second positive solution will be found by constructing mountain pass structure. For simplicity, we will concentrate on the proof of the case $N\ge 3$ and $q=2^*$. Hence without confusions, we will replace $q$ by $2^*$.
\subsection{Mountain pass structure and monotonicity method}\label{section3.1}
In this subsection, we will construct a mountain pass structure and use the monotonicity trick as in \cite{Jeanjean99} to derive a bounded (PS) sequence. We set $f(u)=\mu |u|^{p-2}u+|u|^{2^*-2}u$ with $\mu\ge 0$ and the functional
\[
I_{\eta}(u)=\frac{1}{2}\int_{\Omega}|\nabla u|^2 \ud x
-\eta\int_{\Omega}F(u) \ud x,\quad \forall\, u\in S_\rho\;\;\mbox{and}\;\; \eta>0,
\]
where $F(u)=\frac{\mu}{p}|u|^p+\frac{1}{2^*}|u|^{2^*}$.  

The first step is to find two suitable end points. Without loss of generality, we can suppose $0\in\Omega$.  For the sake of bubbling estimate, we consider the functions
\[
U_\varepsilon(x)=\frac{\varphi [N(N-2)\varepsilon^2]^{\frac{N-2}{4}}}{[\varepsilon^2+|x|^2]^{\frac{N-2}{2}}},
\]
where $\varphi\in C_c^{\infty}(\Omega)$ is a cut-off function such that $0\le \varphi\le 1$ and $\varphi=1$ in some neighborhood of $0$. The following estimates of $U_\varepsilon$ can be found in \cite{Brezis83} and \cite[Lemma 7.1]{JL2022}. As $\varepsilon\to 0$, there hold that
\begin{equation}\label{2509112114}
\begin{aligned}
\|\nabla U_\varepsilon\|_2^2&={\mathcal{S}}^{\frac{N}{2}}+O(\varepsilon^{N-2}),\\
\|U_\varepsilon\|_{2^*}^{2^*}&={\mathcal{S}}^{\frac{N}{2}}+O(\varepsilon^{N}),\\
\end{aligned}
\end{equation}
and
\begin{equation}\label{2509112115}
\begin{aligned}
\|U_\varepsilon\|_{2}^{2}&=
\begin{cases}
\begin{aligned}
&c\varepsilon^2+O(\varepsilon^{N-2})\quad &&\mbox{if}\; N\ge 5,\\
&c\varepsilon^2\ln{\varepsilon}+O(\varepsilon^2)\quad &&\mbox{if}\; N=4,\\
&c\varepsilon+O(\varepsilon^2)\quad &&\mbox{if}\; N=3,\\
\end{aligned}
\end{cases}
\end{aligned}
\end{equation}
where ${\mathcal{S}}$ is the best Sobolev constant and $c$ is a positive constant. We define
\begin{equation}\label{2509121102}
v_\varepsilon=\frac{\sqrt{\rho}U_\varepsilon}{\|U_\varepsilon\|_2},
\end{equation}
which is located on $S_\rho$. We set the parameter $\alpha>\frac{\|\nabla U_1\|_2^2}{\|U_1\|_2^2}$, then $\|\nabla v_1\|_2^2<\alpha \rho$, so $v_1\in A_\alpha\backslash \partial A_\alpha$.  

In the other hand, similar to \eqref{25052719099}, we have
\begin{equation}\label{2509252053}
I_\eta(u)\ge \rho\left[\frac{\alpha}{2}
-\frac{\eta\mu}{p}\mathcal{C}^{p}_{N,p}\alpha^{\frac{p\gamma_{p}}{2}}\rho^{\frac{p}{2}-1}
-\frac{\eta}{2^{*}}\mathcal{S}^{-\frac{2^{*}}{2}}\alpha^{\frac{2^{*}}{2}}\rho^{\frac{2^{*}}{2}-1}\right],\quad \forall\, u\in \partial A_\alpha.
\end{equation}
When  $\rho$ is small enough, and $\eta$ approaches 1, we can get
\begin{equation}\label{2509252052}
\underset{u\in \partial A_\alpha}{\inf}I_\eta(u)>I_\eta(v_1).
\end{equation}
Combining \eqref{2509112114}-\eqref{2509121102}, one can verify
\begin{equation}\label{2509252054}
\|\nabla v_\varepsilon\|_2^2=\rho\left({\mathcal{S}}^{\frac{N}{2}}+O(\varepsilon^{N-2})\right)\|U_\varepsilon\|^{-2}_2,
\end{equation}
and
\begin{equation}\label{2509252055}
I_\eta(v_\varepsilon)\le  \rho\left[\frac{1}{2}
\left({\mathcal{S}}^{\frac{N}{2}}+O(\varepsilon^{N-2})\right)\|U_\varepsilon\|^{-2}_2
-\frac{\eta\rho^{\frac{2^*}{2}-1}}{2^*}\left({\mathcal{S}}^{\frac{N}{2}}+O(\varepsilon^{N})\right)\|U_\varepsilon\|^{-2^*}_2\right]
\end{equation}
as $\varepsilon\to 0$. By \eqref{2509112114}-\eqref{2509252055}, we can obtain the following lemma.
\begin{lemma}\label{2509112067}
 For $N\ge 3$, $2<p<2^*$, $\alpha>\frac{\|\nabla U_1\|_2^2}{\|U_1\|_2^2}$, there exist  $\widetilde{\rho}_0<\rho_0$ and $\eta_0\in (0,1)$ such that for any $\rho\in (0, \widetilde{\rho}_0)$, we can find  
  a $\varepsilon_0\in (0, 1)$ such that for any  $\eta\in (\eta_0, 1]$ and $\varepsilon\in (0, \varepsilon_0]$, we have
 \[
 v_1\in A_\alpha,\quad v_{\varepsilon}\in S_\rho\backslash A_\alpha\quad\mbox{and}\quad \max\{I_\eta(v_1), I_\eta(v_{\varepsilon})\}<\underset{u\in \partial A_\alpha}{\inf}I_\eta(u).
 \]
where where $\rho_0$ and $v_\varepsilon$  are given in Theorem \ref{T041801} and   \eqref{2509121102}.
\end{lemma}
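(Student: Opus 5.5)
We prove the three assertions in order, fixing $\alpha>\|\nabla U_1\|_2^2/\|U_1\|_2^2$ throughout. The idea is to exploit the scaling in $\rho$. Since $v_\varepsilon=\sqrt{\rho}\,U_\varepsilon/\|U_\varepsilon\|_2$, the normalized function $w_\varepsilon:=U_\varepsilon/\|U_\varepsilon\|_2$ has unit $L^2$ norm and is independent of $\rho$. We then write every quantity as $\rho$ times a bracket.

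\textbf{First assertion: $v_1\in A_\alpha$.} This is immediate from the choice of $\alpha$, since
\[
\|\nabla v_1\|_2^2=\rho\,\|\nabla U_1\|_2^2/\|U_1\|_2^2<\alpha\rho .
\]
This holds for every $\rho>0$ and is independent of $\eta$.

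\textbf{Lower bound on $\partial A_\alpha$.} By \eqref{2509252053} with $\eta\le1$ and $\mu\ge0$,
\[
\inf_{\partial A_\alpha}I_\eta\ \ge\ \rho\Big[\frac{\alpha}{2}-\frac{\mu}{p}\mathcal C_{N,p}^p\alpha^{\frac{p\gamma_p}{2}}\rho^{\frac p2-1}-\frac1{2^*}\mathcal S^{-\frac{2^*}{2}}\alpha^{\frac{2^*}{2}}\rho^{\frac{2^*}{2}-1}\Big].
\]
Since $p>2$ and $2^*>2$, the bracket tends to $\alpha/2$ as $\rho\to0$, uniformly in $\eta\in(0,1]$.

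\textbf{Upper bound at $v_1$.} Because $\mu\ge0$ and $\eta>0$, we have $I_\eta(v_1)\le\frac12\|\nabla v_1\|_2^2=\frac{\rho}{2}\,\|\nabla U_1\|_2^2/\|U_1\|_2^2$. This equals $\rho\beta/2$ with $\beta<\alpha$. Hence we can choose $\widetilde\rho_0<\rho_0$ so that for all $\rho<\widetilde\rho_0$ the bracket above exceeds $(\alpha+\beta)/4>\beta/2$. This gives $I_\eta(v_1)<\inf_{\partial A_\alpha}I_\eta$ for all $\eta\in(0,1]$. In particular, any $\eta_0\in(0,1)$ works here; $\eta_0$ only needs to be positive.

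\textbf{Second assertion: the endpoint $v_\varepsilon$.} By \eqref{2509252054} and \eqref{2509112115}, $\|\nabla v_\varepsilon\|_2^2/\rho=(\mathcal S^{N/2}+o(1))\|U_\varepsilon\|_2^{-2}\to\infty$ as $\varepsilon\to0$, because $\|U_\varepsilon\|_2\to0$ in every dimension $N\ge3$. This ratio does not depend on $\rho$, so there is $\varepsilon_1$ with $\|\nabla v_\varepsilon\|_2^2>\alpha\rho$, i.e. $v_\varepsilon\notin A_\alpha$, for all $\varepsilon\le\varepsilon_1$.

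For the energy we use \eqref{2509252055} with $\eta\ge\eta_0$. Set $t_\varepsilon:=\|U_\varepsilon\|_2^{-2}\to\infty$. Then
\[
I_\eta(v_\varepsilon)\le\rho\Big[\tfrac12(\mathcal S^{N/2}+o(1))\,t_\varepsilon-\tfrac{\eta_0}{2^*}\rho^{\frac{2^*}{2}-1}(\mathcal S^{N/2}+o(1))\,t_\varepsilon^{2^*/2}\Big].
\]
Since $2^*/2>1$, the negative term dominates, and the bracket tends to $-\infty$ as $\varepsilon\to0$ for each fixed $\rho>0$. So for each $\rho<\widetilde\rho_0$ we can pick $\varepsilon_0\le\varepsilon_1$, depending on $\rho$ and $\eta_0$, such that the bracket is negative. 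This gives $I_\eta(v_\varepsilon)<0<\rho\,(\alpha+\beta)/4\le\inf_{\partial A_\alpha}I_\eta$ for all $\eta\in(\eta_0,1]$ and all $\varepsilon\le\varepsilon_0$. The uniformity in $\eta$ holds because the upper bound is monotone decreasing in $\eta$.

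\textbf{Where the care is needed.} The main point is the order of quantifiers, which is why $\varepsilon_0$ is allowed to depend on $\rho$. Because $\rho^{\frac{2^*}{2}-1}\to0$, the threshold $\varepsilon_0$ must shrink as $\rho$ does. One also has to handle the $O(\cdot)$ error terms in \eqref{2509112114}, and the $\ln$ correction when $N=4$ (where $\|U_\varepsilon\|_2^2\sim c\varepsilon^2|\ln\varepsilon|$, still $\to0$). Only $\|U_\varepsilon\|_2\to0$ and the positivity of the leading constants are needed, so these cause no difficulty.
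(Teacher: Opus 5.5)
Your proof is correct and follows the paper's route, which the paper only sketches by saying the lemma follows from \eqref{2509112114}--\eqref{2509252055}. Concretely, you use the lower bound \eqref{2509252053} on $\partial A_\alpha$ for small $\rho$, compare it with $I_\eta(v_1)\le\frac12\|\nabla v_1\|_2^2<\frac{\alpha\rho}{2}$, and then choose $\varepsilon_0$ after $\rho$ using the blow-up of $\|U_\varepsilon\|_2^{-2}$ in \eqref{2509252054}--\eqref{2509252055}. Your use of $\mu\ge0$ to make the $v_1$ comparison uniform in $\eta\in(0,1]$ is a harmless sharpening of the paper's ``$\eta$ close to $1$''; with it, $\eta_0$ only affects the choice of $\varepsilon_0$.
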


Apparently,  we can construct a mountain pass structure for the functional $I_\eta$ constrained on $S_\rho$. Using the notations of Lemma \ref{2509112067}, for any $\rho< \widetilde{\rho}_0$, we define
\[
\Gamma=\{\gamma \in C([0,1], S_\rho): \gamma (0)=v_1, \gamma (1)=v_{\varepsilon_0}\}
\]
 and for $\eta\in (\eta_0, 1]$,
\begin{equation}\label{2509112066}
c_\eta= \underset{\gamma\in\Gamma}{\inf}\underset{t\in [0,1]}{\max} {I_\eta(\gamma(t))}.
\end{equation}
Notice that $\varepsilon_0$ depends on the choice of $\rho$, but  the mountain pass level $c_\eta$ is invariant if the end point $v_{\varepsilon_0}$ is replaced by $v_{\varepsilon}$ with $\varepsilon<\varepsilon_0$.

We define
\[
\gamma_{0}(t)=v_{(1-t)+t{\varepsilon_0}},
\]
thus $\gamma_0\in\Gamma$. For $\rho<\widetilde{\rho}_0$,  it follows from Lemma \ref{2509112067} that 
\begin{equation}\label{2509112056}
\underset{t\in [0, 1]}{\max}I_\eta(\gamma_0(t))=\underset{\varepsilon\in (\varepsilon_0, 1]}{\max}I_\eta(v_\varepsilon)=\underset{\varepsilon\in (0, 1]}{\max}I_\eta(v_\varepsilon).
\end{equation}
 Now let us give the estimate for the upper bound of $c_\eta$.
\begin{lemma}\label{2509121247}
Assume that $N\ge 3$ and in addition $\overline{B}_R\subset\Omega$ for some $R>0$ if $N=3$. Then there exist $\widehat{\rho}_0>0$ and $C>0$ such that when $\rho\in (0,\widehat{\rho}_0)$, we have
\begin{equation} \label{2509121246}
c_\eta\le \frac{1}{N}{\mathcal{S}}^{\frac{N}{2}}\eta^{1-\frac{N}{2}}+h_\eta(\rho)
\end{equation}
where
\[
h_\eta(\rho)=
\begin{cases}
C\rho^{\frac{N-2}{2}}\eta^{\frac{(N-2)(N-4)}{4}}\quad\; &\mbox{if}\; N\ge 5,\\
C\rho|\ln{(\rho\eta)}|^{-1}\quad\; &\mbox{if}\; N= 4,\\
\frac{1}{4}\lambda_1(B_R)\rho+C\rho^{2}\eta^{\frac12}\quad\; &\mbox{if}\; N=3.\\
\end{cases}
\]
\end{lemma}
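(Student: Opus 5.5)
We test $c_\eta$ with the path $\gamma_0$. By \eqref{2509112056}, $c_\eta\le\max_{\varepsilon\in(0,1]}I_\eta(v_\varepsilon)$. Since $\mu\ge0$, we may drop the $L^p$ term, so it suffices to bound $\sup_{\varepsilon}J_\eta(v_\varepsilon)$, where
\[
J_\eta(u)=\tfrac12\|\nabla u\|_2^2-\tfrac{\eta}{2^*}\|u\|_{2^*}^{2^*}.
\]
For $N=3$ the bubble is replaced by a ball-adapted function, discussed in the last paragraph. We write $a=\|\nabla v_\varepsilon\|_2^2$ and $b=\|v_\varepsilon\|_{2^*}^{2^*}$. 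The elementary maximisation $\tfrac12 t^2a-\tfrac{\eta}{2^*}t^{2^*}b\le \tfrac1N\eta^{1-\frac N2}(a^{2^*}/b^2)^{\frac{1}{2^*-2}}$ alone is not enough, because our $v_\varepsilon$ has fixed mass and no free dilation parameter $t$. So we work directly with \eqref{2509252054}–\eqref{2509252055}. Setting $s=\rho\|U_\varepsilon\|_2^{-2}$, these give
\[
I_\eta(v_\varepsilon)\le \tfrac12 s\,\big(\mathcal S^{\frac N2}+O(\varepsilon^{N-2})\big)-\tfrac{\eta}{2^*}s^{\frac{2^*}{2}}\big(\mathcal S^{\frac N2}+O(\varepsilon^N)\big).
\]
Maximising this in $s>0$, as if $s$ were free, yields $\tfrac1N\mathcal S^{\frac N2}\eta^{1-\frac N2}\big(1+O(\varepsilon^{N-2})\big)$. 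The maximiser is $s_*\approx\eta^{-\frac{N-2}{2}}$, i.e.\ $\|U_\varepsilon\|_2^2\approx\rho\,\eta^{\frac{N-2}{2}}$.

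The key step is to control the error term at the maximising $\varepsilon$. First we show that the sup over $\varepsilon\in(0,1]$ is essentially attained where $s$ is of order $\eta^{-\frac{N-2}2}$. For $\varepsilon$ away from this regime the expression is much smaller: when $s$ is small the energy is $O(s)$, and when $s$ is large the negative term dominates. For $\rho$ small (below some $\widehat\rho_0$) this regime corresponds, by \eqref{2509112115}, to small $\varepsilon$.
\begin{itemize}
\item[$\bullet$] For $N\ge5$, $\|U_\varepsilon\|_2^2\sim c\varepsilon^2$, so $\varepsilon^2\sim\rho\,\eta^{\frac{N-2}{2}}/c$. The error $O(\varepsilon^{N-2})\,s$ is then $O\big((\rho\eta^{\frac{N-2}2})^{\frac{N-2}2}\eta^{-\frac{N-2}2}\big)$. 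Tracking exponents gives $C\rho^{\frac{N-2}2}\eta^{\frac{(N-2)(N-4)}4}$.
\item[$\bullet$] For $N=4$, $\|U_\varepsilon\|_2^2\sim c\varepsilon^2|\ln\varepsilon|$, so $\varepsilon^2\approx \rho\eta/|\ln(\rho\eta)|$. The error $O(\varepsilon^2)s$ then becomes $C\rho|\ln(\rho\eta)|^{-1}$.
\end{itemize}
Bounding $s\cdot O(\varepsilon^{N-2})$ uniformly over the relevant range of $\varepsilon$, rather than only at the optimum, is routine once we note that $s$ is monotone in $\varepsilon$.

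For $N=3$ the error $O(\varepsilon)\,s$ is of order $\rho^0$, since $\|U_\varepsilon\|_2^2\sim c\varepsilon$. This is the main obstacle. We would therefore replace the cut-off bubble by Brezis–Nirenberg's refined test function on $B_R$,
\[
U_\varepsilon=\frac{\varepsilon^{1/2}\cos(\pi|x|/(2R))}{(\varepsilon^2+|x|^2)^{1/2}}.
\]
For this choice one has $\|\nabla U_\varepsilon\|_2^2=\mathcal S^{3/2}+\lambda_1(B_{R})\cdot\tfrac14\|U_\varepsilon\|_2^2\cdot(\text{const})+O(\varepsilon)$ with the exact constant. Equivalently, one uses the classical expansion $\|\nabla U_\varepsilon\|_2^2-\tfrac{\lambda_1(B_R)}{4}\|U_\varepsilon\|_2^2=\mathcal S^{3/2}+O(\varepsilon^2)$ after normalisation, so that $\tfrac12 a\le \tfrac12 s\,\mathcal S^{3/2}+\tfrac18\lambda_1(B_R)\rho+O(\varepsilon^2)s$. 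Keeping the extra term $\tfrac14\lambda_1(B_R)\rho$ separate and maximising in $s$ as before gives the stated bound, with remainder $C\rho^2\eta^{1/2}$ coming from $\varepsilon\sim\rho\,\eta^{1/2}$. Here we must also check that these new $v_\varepsilon$ still satisfy the conclusions of Lemma \ref{2509112067}, so that the definition of $c_\eta$ is unchanged. I expect pinning down the sharp constant $\tfrac14\lambda_1(B_R)$ to be the delicate part of the proof.
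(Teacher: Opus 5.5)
Your proposal is correct and follows the paper's route: bound $c_\eta$ by $\sup_{\varepsilon\in(0,1]}I_\eta(v_\varepsilon)$ along $\gamma_0$, drop the $L^p$ term since $\mu\ge0$, and estimate the remaining pure critical expression. The only difference is that the paper imports this last estimate from Lemma 3.7 of Pierotti--Verzini--Yu (including the Br\'ezis--Nirenberg-type cut-off behind the $\lambda_1(B_R)$ term when $N=3$), whereas you reprove it by maximising in $s$ for each fixed $\varepsilon$ and splitting into regimes.

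Two minor corrections. For $N=3$ the error $O(\varepsilon)s$ is $O(\rho)$, not $O(\rho^0)$; this is exactly why the cut-off constant matters. Also, the term you actually obtain is $\frac18\lambda_1(B_R)\rho$, a factor two below the stated $\frac14\lambda_1(B_R)\rho$, so the constant is not delicate. That slack even lets you keep the paper's own family: choose its free radial cut-off $\varphi$ (equal to $1$ near $0$) close to $\cos(\pi|x|/(2R))$, and then $\Gamma$ and $c_\eta$ need not be redefined.
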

\begin{proof}
By \eqref{2509112066} and \eqref{2509112056}, we have
\[
c_\eta\le \underset{\varepsilon\in (0, 1]}{\sup}I_\eta(v_\varepsilon).
\]
By the definition of $v_\varepsilon$, we have
\begin{equation}\label{2509120207}
\begin{aligned}
\underset{\varepsilon\in (0, 1]}{\max}I_\eta(v_\varepsilon)&=\underset{\varepsilon\in (0, 1]}{\max} \left(\frac{1}{2}\|\nabla v_\varepsilon\|_2^2-\frac{\mu\eta}{p}\|v_\varepsilon\|_p^p-\frac{\eta}{2^*}\|v_\varepsilon\|_{2^*}^{2^*}\right)\\
&\le\underset{\varepsilon\in (0, 1]}{\max} \left(\frac{1}{2}\|\nabla v_\varepsilon\|_2^2-\frac{\eta}{2^*}\|v_\varepsilon\|_{2^*}^{2^*}\right)\\
&=\underset{\varepsilon\in (0, 1]}{\max} \left(\frac{\rho}{2}\frac{\|\nabla U_\varepsilon\|_2^2}{\| U_\varepsilon\|_2^2}
-\eta\frac{\rho^{\frac{2^*}{2}}}{2^*}\frac{\| U_\varepsilon\|_{2^*}^{2^*}}{\| U_\varepsilon\|_2^{2^*}}\right)\\
&=\rho\underset{\varepsilon\in (0, 1]}{\max} \left(\frac{1}{2}\frac{\|\nabla U_\varepsilon\|_2^2}{\| U_\varepsilon\|_2^2}
-\frac{\eta\rho^{\frac{2^*}{2}-1}}{2^*}\frac{\| U_\varepsilon\|_{2^*}^{2^*}}{\| U_\varepsilon\|_2^{2^*}}\right).
\end{aligned}
\end{equation}
Using \cite[Lemma 3.7]{Pierotti25}, we can deduce that
\begin{equation}\label{2509120208}
\underset{\varepsilon\in (0, 1]}{\max} \left(\frac{1}{2}\frac{\|\nabla U_\varepsilon\|_2^2}{\| U_\varepsilon\|_2^2}
-\frac{\eta\rho^{\frac{2^*}{2}-1}}{2^*}\frac{\| U_\varepsilon\|_{2^*}^{2^*}}{\| U_\varepsilon\|_2^{2^*}}\right)\le \frac{1}{N}{\mathcal{S}}^{\frac{N}{2}}\rho^{-1}\eta^{1-\frac{N}{2}}+\widetilde{h}_\eta(\rho)
\end{equation}
where
\[
\widetilde{h}_\eta(\rho)=
\begin{cases}
C\rho^{\frac{N-4}{2}}\eta^{\frac{(N-2)(N-4)}{4}}\quad\; &\mbox{if}\; N\ge 5,\\
C|\ln{(\rho\eta)}|^{-1}\quad\; &\mbox{if}\; N= 4,\\
\frac{1}{4}\lambda_1(B_R)+C\rho\eta^{\frac12}\quad\; &\mbox{if}\; N=3.\\
\end{cases}
\]
Combining \eqref{2509120207} and \eqref{2509120208}, we can get the desired result.
\end{proof}

It is clear that $c_{\eta}$ is non-increasing, so $c_{\eta}$ is almost everywhere differentiable with respect to $\eta$.
For convenience, we denote
\[
S_\rho^+:=\{u\in S_\rho: u\ge 0\}.
\]

Next, we will use monotonicity trick to prove
\begin{proposition}\label{2408310238}
\begin{itemize}
\item[\rm (i)] For any $\eta\in (\eta_0, 1]$ where $\eta_0$ is defined in Lemma \ref{2509112067}, then $c_\eta$ is continuous from left.
\end{itemize}
\smallskip
 Moreover, assume that $c_{\eta}$ is differentiable at some $\eta\in (\eta_0, 1)$. Then  we have
\begin{itemize}
\item[\rm (ii)] for any $B>0$, there exist $C>0$ {\rm(}depending on $B$, $c_\eta$ and $c'_\eta:=\frac{\rm d}{\ud\eta}c_{\eta}${\rm)} and $\eta_1\in (\eta_0, \eta)$ such that for any $\widetilde{\eta}\in (\eta_1, \eta)$ and
\begin{equation}\label{2509180016}
\widetilde{\gamma}\in \{\gamma\in\Gamma: \max_{t\in [0, 1]}I_{\widetilde{\eta}}(\gamma(t))\le c_{\widetilde{\eta}}+B(\eta-\widetilde{\eta})\},
\end{equation}
 if $I_{\eta}(\widetilde{\gamma}(t))\ge c_\eta-(\eta-\widetilde{\eta})$ for some $t\in [0, 1]$, then $\|\widetilde{\gamma}(t)\|_{H_0^1(\Omega)}\le C$;
\item[\rm (iii)] there exists $\eta_1\in (\eta_0, \eta)$ such that for $\widetilde{\eta}\in (\eta_1, \eta)$, one can find a $\gamma\in \Gamma$ such that ${\rm Im}(\gamma)\subset S_\rho^+$ and $\underset{t\in [0, 1]}{\max}I_{\eta}(\gamma(t))\le c_\eta+(-c'_\eta+2)(\eta-\widetilde{\eta})$.
\end{itemize}
\end{proposition}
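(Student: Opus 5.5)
We follow the standard Jeanjean/Struwe monotonicity scheme (as in \cite{Jeanjean99, Pierotti25}), exploiting the structure $I_\eta(u)=\frac12\|\nabla u\|_2^2-\eta\int_\Omega F(u)\,\ud x$ with $F\ge 0$ (since $\mu\ge 0$). The key identity is that for $\widetilde\eta<\eta$,
\[
\frac{I_{\widetilde\eta}(u)-I_\eta(u)}{\eta-\widetilde\eta}=\int_\Omega F(u)\,\ud x\ge 0 .
\]
Everything below is driven by this identity together with $\|\nabla u\|_2^2=2I_\eta(u)+2\eta\int_\Omega F(u)$.

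\emph{Step (i): left continuity.} Fix $\eta\in(\eta_0,1]$ and let $\eta_k\uparrow\eta$. Monotonicity gives $c_{\eta_k}\ge c_\eta$. For the reverse inequality, fix $\delta>0$ and choose $\gamma\in\Gamma$ with $\max_t I_\eta(\gamma(t))\le c_\eta+\delta$. Since $\gamma([0,1])$ is compact in $H_0^1(\Omega)$, the quantity $M=\max_t\int_\Omega F(\gamma(t))$ is finite. Hence
\[
c_{\eta_k}\le \max_t I_{\eta_k}(\gamma(t))\le c_\eta+\delta+(\eta-\eta_k)M,
\]
and letting $k\to\infty$ and then $\delta\to 0$ gives $\lim_k c_{\eta_k}=c_\eta$. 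This uses only that $\Gamma$ does not depend on $\eta$, which holds for $\eta\in(\eta_0,1]$ by Lemma \ref{2509112067}.

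\emph{Step (ii): a priori bound.} Since $c_\eta$ is differentiable at $\eta$, there is $\eta_1<\eta$ such that
\[
c_{\widetilde\eta}-c_\eta\le(-c'_\eta+1)(\eta-\widetilde\eta)\quad\text{for } \widetilde\eta\in(\eta_1,\eta).
\]
Let $\widetilde\gamma$ belong to the set \eqref{2509180016}, and let $u=\widetilde\gamma(t)$ satisfy $I_\eta(u)\ge c_\eta-(\eta-\widetilde\eta)$. Then
\[
\int_\Omega F(u)=\frac{I_{\widetilde\eta}(u)-I_\eta(u)}{\eta-\widetilde\eta}\le\frac{c_{\widetilde\eta}+B(\eta-\widetilde\eta)-c_\eta+(\eta-\widetilde\eta)}{\eta-\widetilde\eta}\le -c'_\eta+B+2 .
\]
Moreover $I_\eta(u)\le I_{\widetilde\eta}(u)\le c_{\widetilde\eta}+B(\eta-\widetilde\eta)$, which is bounded (for instance by $c_{\eta_1}+B$). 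Therefore
\[
\|\nabla u\|_2^2=2I_\eta(u)+2\eta\int_\Omega F(u)\le C(B,c_\eta,c'_\eta),
\]
and together with $\|u\|_2^2=\rho$ this gives the $H_0^1$ bound.

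\emph{Step (iii): a nonnegative near-optimal path.} By definition of $c_{\widetilde\eta}$, pick $\gamma\in\Gamma$ with $\max_t I_{\widetilde\eta}(\gamma(t))\le c_{\widetilde\eta}+(\eta-\widetilde\eta)$, and replace it by $|\gamma|$. The map $u\mapsto|u|$ is continuous on $H_0^1(\Omega)$ and preserves $S_\rho$, $I_\eta$, $I_{\widetilde\eta}$, and the endpoints $v_1,v_{\varepsilon_0}$, which are positive. Hence $|\gamma|\in\Gamma$ and $\mathrm{Im}(|\gamma|)\subset S_\rho^+$. Because $F\ge 0$ and $\eta>\widetilde\eta$, we have $I_\eta\le I_{\widetilde\eta}$ pointwise, so
\[
\max_t I_\eta(|\gamma|(t))\le c_{\widetilde\eta}+(\eta-\widetilde\eta)\le c_\eta+(-c'_\eta+1)(\eta-\widetilde\eta)+(\eta-\widetilde\eta),
\]
using the derivative estimate from Step (ii). This is the required bound $c_\eta+(-c'_\eta+2)(\eta-\widetilde\eta)$.

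The only delicate points are the uniform choice of $\eta_1$ from the definition of the derivative and the fact that $\Gamma$ is independent of $\eta$. Both are secured by Lemma \ref{2509112067}, so I do not expect a serious obstacle here. The hard part of the overall argument, namely building a (PS) sequence from these paths via Proposition \ref{22072013}, comes later and is not needed for this proposition.
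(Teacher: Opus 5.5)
Your proposal is correct and follows the paper's proof essentially step for step. It uses the same left-continuity argument with a fixed near-optimal path whose $\max_t\int_\Omega F$ is finite, the same difference-quotient bound $\int_\Omega F\le -c'_\eta+B+2$ in (ii), and the same replacement of $\gamma$ by $|\gamma|$ together with $I_\eta\le I_{\widetilde\eta}$ in (iii). One small fix in (ii): bound $c_{\widetilde\eta}$ by the derivative estimate $c_\eta+(-c'_\eta+1)(\eta-\widetilde\eta)$, as the paper does, rather than by $c_{\eta_1}$, so that $C$ depends only on $B$, $c_\eta$ and $c'_\eta$.
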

\begin{proof}
{\rm (i)} In fact, we can select a sequence $\eta_n\to\eta$ and $\eta_n<\eta$. Since $c_\eta\le {c}_{\eta_n}$, thus $c_\eta\le \lim\limits_{n\to\infty}{c}_{\eta_n}$. If $c_\eta<\lim\limits_{n\to\infty}{c}_{\eta_n}$, let $\delta:=\lim\limits_{n\to\infty}{c}_{\eta_n}-c_\eta>0$. Now by means of the definition of $c_\eta$, we can select a $\overline{\gamma}\in \Gamma$ such that $\underset{t\in [0, 1]}{\max}I_\eta(\overline{\gamma}(t))<c_\eta+\frac{\delta}{2}$. However,
\[
c_{\eta_n}\le \max_{t\in [0, 1]}I_{\eta_n}(\overline{\gamma}(t))<c_\eta
+(\eta-\eta_n)\max_{t\in [0, 1]}\int_\Omega F(\overline{\gamma}(t))\ud x+\frac{\delta}{2}.
\]
This is a contradiction whenever one let $n\to \infty$. Consequently $\lim\limits_{n\to\infty}{c}_{\eta_n}=c_\eta$.

{\rm (ii)} For $\widetilde{\gamma}\in \Gamma$, if \eqref{2509180016} holds and $I_{\eta}(\widetilde{\gamma}(t))\ge c_{\eta}-(\eta-\widetilde{\eta})$, then
\[
\frac{I_{\widetilde{\eta}}(\widetilde{\gamma}(t))-I_{\eta}(\widetilde{\gamma}(t))}{\eta-\widetilde{\eta}}\le \frac{c_{\widetilde{\eta}}+B(\eta-\widetilde{\eta})-c_{\eta}+(\eta-\widetilde{\eta})}{\eta-\widetilde{\eta}}=\frac{c_{\widetilde{\eta}}-c_{\eta}}{\eta-\widetilde{\eta}}+B+1.
\]
Since $c'_{\eta}$ exists, when $\widetilde{\eta}$ is close to 1, we have
\[
\frac{c_{\widetilde{\eta}}-c_{\eta}}{\eta-\widetilde{\eta}}<-c'_\eta+1,
\]
thus
\[
\int_{\Omega}F(\widetilde{\gamma}(t)) \ud x=\frac{I_{\widetilde{\eta}}(\widetilde{\gamma}(t))-I_{\eta}(\widetilde{\gamma}(t))}{\eta-\widetilde{\eta}}\le -c'_{\eta}+B+2.
\]
Hence there exists some $\eta_1\in (\eta_0, \eta)$, such that for $\widetilde{\eta}\in (\eta_1, \eta)$,
\[
\begin{aligned}
\frac12\|\nabla \widetilde{\gamma}(t)\|_{L^2(\Omega)}^2&=I_{\widetilde{\eta}}(\widetilde{\gamma}(t))
+\widetilde{\eta}\int_{\Omega}F(\widetilde{\gamma}(t)) \ud x\\
&\le c_{\widetilde{\eta}}+B(\eta-\widetilde{\eta})+\widetilde{\eta}(-c'_{\eta}+B+2)\\
&\le c_{\eta}+(1+B-c'_\eta)(\eta-\widetilde{\eta})+\widetilde{\eta}(-c'_{\eta}+B+2)\\
&\le C.
\end{aligned}
\]

{\rm (iii)} We can select $\{\gamma\}\subset \Gamma$ satisfying
\begin{equation}\label{2408310308}
\underset{t\in [0, 1]}{\max}I_{\eta}(\gamma(t))\le c_{\widetilde{\eta}}+\eta-\widetilde{\eta}.
\end{equation}
Notice that if we replace $\gamma$ by $|\gamma|$, then \eqref{2408310308} still holds true. Without loss of generality, we can assume ${\rm Im}(\gamma)\subset S^+_\rho$. When $\widetilde{\eta}$ is close to 1,
\[
I_{\eta}(\gamma(t))\le I_{\widetilde{\eta}}(\gamma(t))\le c_{\widetilde{\eta}}+\eta-\widetilde{\eta}\le c_{\eta}+(-c'_{\eta}+2)(\eta-\widetilde{\eta}).
\]
The proof is done.
\end{proof}
To find a bounded (PS) sequence, the following framework will be used, which was proven in \cite[Theorem 4.5 and Remark 4.10]{Ghoussoub1993}, see also \cite[Lemma 2.1]{Pierotti25}.
\begin{proposition}\label{22072013}
Let $X$ be a complete connected Hilbert manifold and $J\in C^1(X,\mathbb{R})$. Let $K\subset X$ be compact and consider a family
\[
\mathcal{E}\subset \{E\subset X: E ~\text{is}~ \text{compact},~K\subset E \},
\]
which is invariant with respect to all deformations leaving $K$ fixed. Assume that
\[
c^*:=\underset{u\in K}{\max}J(u)<c:=\underset{E\in \mathcal{E}}{\inf}\underset{u\in E}{\max}J(u)\in \mathbb{R}.
\]
Then there exist $0<\overline{\delta}<c-c^*$ and $C>0$ such that
\begin{itemize}
\item for every $\delta, \varepsilon\in (0,\overline{\delta})$ with $\varepsilon\le\delta$;
\item for every $E^*\in \mathcal{E}_{\varepsilon}:=\{E\in\mathcal{E}: \max_{u\in E}J(u)\le c+\varepsilon\}$;
\item for every closed set $\mathcal{T}\subset X$ satisfying that $\mathcal{T}$ has nonempty intersection with every element of $\mathcal{E}_{\varepsilon}$ and $\inf_{u\in \mathcal{T}}J(u)\ge c-\delta$,
\end{itemize}
one has that there exists $v^*\in X$ such that
\begin{equation}\label{2509171922}
|J(v^*)-c|+\|J'(v^*)\|+\mbox{dist}(v^*, E^*)+\mbox{dist}(v^*, \mathcal{T})\le C\sqrt{\delta}.
\end{equation}
\end{proposition}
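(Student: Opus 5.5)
The plan is to argue by contradiction via a quantitative deformation lemma on the Hilbert manifold $X$. Fix $\overline{\delta}<\frac{c-c^*}{4}$, so that $J\le c^*<c-4\overline{\delta}$ on $K$. Fix $\delta,\varepsilon$, $E^*$ and $\mathcal{T}$ as in the statement, and suppose that \eqref{2509171922} fails for every $v^*\in X$ with some constant $C$ to be chosen (say $C=10$). Set $r:=2\sqrt{\delta}$ and consider the region
\[
\mathcal{N}:=\{v\in X:\ c-2\delta\le J(v)\le c+2\delta,\ \mathrm{dist}(v,E^*)\le r,\ \mathrm{dist}(v,\mathcal{T})\le r\}.
\]
Every $v\in\mathcal{N}$ satisfies $|J(v)-c|+\mathrm{dist}(v,E^*)+\mathrm{dist}(v,\mathcal{T})\le 2\delta+4\sqrt\delta\le 6\sqrt{\delta}$ for $\delta<1$. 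Hence the failure of \eqref{2509171922} with $C=10$ forces $\|J'(v)\|\ge 4\sqrt{\delta}$ on $\mathcal{N}$, and by continuity of $J'$ a slightly weaker bound, say $\|J'\|\ge 3\sqrt\delta$, holds on a slightly enlarged neighbourhood $\mathcal{N}'$ of $\mathcal{N}$.

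Next I would build a locally Lipschitz pseudo-gradient vector field $W$ tangent to $X$ on the regular set, with $\|W\|\le 1$ and $\langle J'(v),W(v)\rangle\ge \tfrac12\|J'(v)\|$. I would multiply it by a Lipschitz cutoff $\chi$ equal to $1$ on $\{c-\delta\le J\le c+\varepsilon\}\cap\{\mathrm{dist}(\cdot,\mathcal{T})\le \sqrt\delta\}\cap\{\mathrm{dist}(\cdot,E^*)\le \sqrt\delta\}$ and vanishing outside $\mathcal{N}'$. Completeness of $X$ gives a global flow $\phi_s$ of $-\chi W$. Since $J<c-2\delta$ near $K$, $\chi=0$ there, so $\phi_s$ fixes $K$ pointwise. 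Invariance of $\mathcal{E}$ then yields $E_1:=\phi_{\sqrt\delta}(E^*)\in\mathcal{E}$, and $\max_{E_1}J\le c+\varepsilon$ because $J$ is non-increasing along the flow. Thus $E_1\in\mathcal{E}_\varepsilon$, and by hypothesis $E_1\cap\mathcal{T}\ne\emptyset$.

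The contradiction comes from tracking a point $u\in E^*$ with $\phi_{\sqrt\delta}(u)\in\mathcal{T}$. The trajectory moves at speed at most $1$, so over time $\sqrt\delta$ it stays within distance $\sqrt\delta$ of both $u\in E^*$ and its endpoint in $\mathcal{T}$. Its energy stays in $[c-\delta,c+\varepsilon]$: it starts at most $c+\varepsilon$, it decreases, and it ends at least $\inf_{\mathcal{T}}J\ge c-\delta$. Hence $\chi\equiv1$ along the whole trajectory. Then
\[
J(u)-J(\phi_{\sqrt\delta}(u))\ge \int_0^{\sqrt\delta}\tfrac12\|J'\|\,\ud s\ge \tfrac32\delta .
\]
On the other hand this drop is at most $(c+\varepsilon)-(c-\delta)\le 2\delta$, so the first estimate is not yet sharp enough. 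I will therefore rescale: run the flow for time $2\sqrt\delta$ while keeping the cutoff region of radius $2\sqrt{\delta}$. This gives a drop $\ge 3\delta>2\delta$, which is the desired contradiction. The constants $r$ and $C$ are then adjusted accordingly, for instance $r=3\sqrt\delta$ and $C=20$.

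The main obstacle is precisely this bookkeeping. The radius of the cutoff, the flow time, and the lower bound on $\|J'\|$ must all scale like $\sqrt\delta$. They have to be chosen so that the trajectory remains inside the region where $\chi=1$ while the guaranteed energy drop exceeds $\varepsilon+\delta$. A second point needing care is the construction of a pseudo-gradient flow on a Hilbert manifold, rather than a Banach space, that fixes $K$; this is standard (Palais), and it is exactly where the condition $\overline\delta<c-c^*$ is used.
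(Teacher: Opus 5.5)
The paper gives no proof of this statement. It imports it from Ghoussoub's monograph (Theorem 4.5 and Remark 4.10), via Lemma 2.1 of Pierotti--Verzini--Yu, where it sits in the more general framework of homotopy-stable families on Finsler manifolds localized on a dual set.

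Your argument is a correct, self-contained proof of this special case by a quantitative deformation lemma, and your final bookkeeping closes. Take $\overline{\delta}<\min\{1,\frac{c-c^*}{4}\}$, $r=3\sqrt{\delta}$, flow time $2\sqrt{\delta}$ and $C=20$. Then failure of the estimate gives $\|J'\|>12\sqrt{\delta}$ on $\mathcal{N}$. Pick $u\in E^*$ with $\phi_{2\sqrt\delta}(u)\in\mathcal{T}$. Its trajectory stays in $\{c-\delta\le J\le c+\varepsilon\}\cap\{\mathrm{dist}(\cdot,E^*)\le 2\sqrt\delta\}\cap\{\mathrm{dist}(\cdot,\mathcal{T})\le 2\sqrt\delta\}\subset\mathcal{N}$. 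So the energy drop is at least $12\delta>\varepsilon+\delta$, a contradiction.

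Two small points.

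First, continuity of $J'$ does not give a uniform metric enlargement of the non-compact set $\mathcal{N}$, but you do not need one. Take the open set $\mathcal{N}':=\{J>c-3\delta\}\cap\{\|J'\|>3\sqrt\delta\}\supset\mathcal{N}$ and $\chi=\mathrm{dist}(\cdot,B)/(\mathrm{dist}(\cdot,B)+\mathrm{dist}(\cdot,A))$, with $A$ the closed set where you want $\chi=1$ and $B=X\setminus\mathcal{N}'$. Then $\chi W$ is locally Lipschitz on all of $X$, since it vanishes identically near critical points. Also $\chi=0$ on $K$, because $c^*<c-3\delta$.

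Second, the rescaling was unnecessary. In your first run the trajectory already lies in a subset of $\mathcal{N}$, where $\|J'\|>4\sqrt\delta$ strictly, so the drop over time $\sqrt\delta$ is $>2\delta\ge\varepsilon+\delta$.

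As for what each route buys: the citation covers a much more general setting than the paper needs. Your proof is elementary, uses only a pseudo-gradient flow on the Hilbert manifold, and yields explicit constants.
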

\begin{lemma}\label{2408312351}
If $c_{\eta}$ is differentiable at $\eta$, there exists a bounded sequence $\{u_n\}\subset S_\rho$ such that
\begin{equation}\label{2408311548}
I_{\eta}'|_{S_\rho}(u_n)\to 0,\quad  I_{\eta}(u_n)\to c_\eta \quad \mbox{ and }\quad \|u_n^{-}\|_{L^2(\Omega)}\to 0.
\end{equation}
\end{lemma}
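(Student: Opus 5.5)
We will apply Proposition \ref{22072013} with $X=S_\rho$ (a complete connected Hilbert manifold in $H_0^1(\Omega)$), $J=I_\eta|_{S_\rho}$, $K=\{v_1,v_{\varepsilon_0}\}$ and $\mathcal{E}=\{\gamma([0,1]):\gamma\in\Gamma\}$. This family is invariant under deformations fixing $K$, and its minimax value is exactly $c_\eta$. By Lemma \ref{2509112067}, $c^*=\max\{I_\eta(v_1),I_\eta(v_{\varepsilon_0})\}<\inf_{\partial A_\alpha}I_\eta\le c_\eta$, since every path in $\Gamma$ crosses $\partial A_\alpha$. So the hypotheses hold and we obtain $\overline\delta$ and $C$ as in Proposition \ref{22072013}.

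Next we fix a sequence $\widetilde\eta_n\uparrow\eta$ and set $\delta_n:=(-c'_\eta+3)(\eta-\widetilde\eta_n)\to0$; note $c'_\eta\le0$ because $c_\eta$ is non-increasing, so $\delta_n>0$. By Proposition \ref{2408310238}(iii) there is $\gamma_n\in\Gamma$ with ${\rm Im}(\gamma_n)\subset S_\rho^+$ and $\max I_\eta(\gamma_n)\le c_\eta+\varepsilon_n$, where $\varepsilon_n:=(-c'_\eta+2)(\eta-\widetilde\eta_n)\le\delta_n$. We take $E^*_n:={\rm Im}(\gamma_n)$. For $\mathcal{T}$ we take
\[
\mathcal{T}_n:=\{u\in S_\rho:\ I_\eta(u)\ge c_\eta-\delta_n,\ \|u\|_{H_0^1}\le C_0\},
\]
which is closed, and $\inf_{\mathcal{T}_n}I_\eta\ge c_\eta-\delta_n$ by construction.

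The key point is that $\mathcal{T}_n$ meets every $E\in\mathcal{E}_{\varepsilon_n}$. Let $E=\gamma([0,1])$ with $\max I_\eta(\gamma)\le c_\eta+\varepsilon_n$. Since $\eta>\widetilde\eta_n$ and $I_{\widetilde\eta_n}\le I_\eta+(\eta-\widetilde\eta_n)\sup F$, we want $\max I_{\widetilde\eta_n}(\gamma)\le c_{\widetilde\eta_n}+B(\eta-\widetilde\eta_n)$. The difficulty is that $\int F(\gamma(t))$ is not a priori bounded. We handle this by looking instead at the points where $I_\eta(\gamma(t))\ge c_\eta-(\eta-\widetilde\eta_n)$; such points exist because $\max I_\eta(\gamma)\ge c_\eta$. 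At those points we repeat the argument of Proposition \ref{2408310238}(ii). The quotient $\int F=(I_{\widetilde\eta}-I_\eta)/(\eta-\widetilde\eta)$ is controlled using $I_{\widetilde\eta}(\gamma(t))\le\max I_{\widetilde\eta}(\gamma)$. To make this work, we restrict $\mathcal{E}_{\varepsilon_n}$-type paths via Proposition \ref{2408310238}(ii) with $B=-c'_\eta+3$, choosing $C_0$ as the constant given there. If the direct inclusion fails, we would instead apply the proposition with $J=I_{\widetilde\eta_n}$ and absorb the $O(\eta-\widetilde\eta_n)$ discrepancy into $\delta_n$. This compatibility between the definition of $\mathcal{E}_\varepsilon$ (stated for $I_\eta$) and hypothesis \eqref{2509180016} (stated for $I_{\widetilde\eta}$) is the step I expect to be the main obstacle. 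Once the intersection property holds, we have $c_\eta-\delta_n\le I_\eta\le c_\eta+\varepsilon_n$ at these points, so they lie in $\mathcal{T}_n$ provided their $H_0^1$ norm is at most $C_0$, which is exactly the conclusion of Proposition \ref{2408310238}(ii).

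With these choices, \eqref{2509171922} gives $v_n^*\in S_\rho$ satisfying $|I_\eta(v_n^*)-c_\eta|+\|I'_\eta|_{S_\rho}(v_n^*)\|+{\rm dist}(v_n^*,E_n^*)+{\rm dist}(v_n^*,\mathcal{T}_n)\le C\sqrt{\delta_n}\to0$. Setting $u_n:=v_n^*$, the distance to $\mathcal{T}_n$ gives $\|u_n\|_{H_0^1}\le C_0+1$, so the sequence is bounded. The distance to $E_n^*\subset S_\rho^+$ gives some $w_n\ge0$ with $\|u_n-w_n\|_{H_0^1}\to0$, and since $|u_n^-|\le|u_n-w_n|$ pointwise we get $\|u_n^-\|_{L^2}\to0$. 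This yields \eqref{2408311548}.
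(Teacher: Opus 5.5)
Your main route, with $J=I_\eta$, fails at exactly the step you flag, and the gap is genuine. With this choice, $\mathcal{E}_{\varepsilon_n}$ consists of all ${\rm Im}(\gamma)$ with $\max_t I_\eta(\gamma(t))\le c_\eta+\varepsilon_n$. Your $\mathcal{T}_n$, which carries the cap $\|u\|_{H_0^1}\le C_0$, must meet every one of them.

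Proposition \ref{2408310238}(ii) only controls paths satisfying \eqref{2509180016}, i.e.\ paths with an upper bound on $\max_t I_{\widetilde\eta}(\gamma(t))$. An upper bound on $\max_t I_\eta(\gamma(t))$ gives no such bound, because
$I_{\widetilde\eta}(\gamma(t))=I_\eta(\gamma(t))+(\eta-\widetilde\eta)\int_\Omega F(\gamma(t))\,\ud x$
and $\int_\Omega F$ is unbounded along an arbitrary path. Controlling the quotient $(I_{\widetilde\eta}-I_\eta)/(\eta-\widetilde\eta)$ via $I_{\widetilde\eta}(\gamma(t))\le\max I_{\widetilde\eta}(\gamma)$ is circular, since that maximum is precisely what is not controlled. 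Nothing available prevents a path that is almost optimal for $I_\eta$ from reaching level $c_\eta-\delta_n$ only at points of huge $H_0^1$-norm. Avoiding this is the whole point of the monotonicity trick: one only controls paths that are almost optimal for the \emph{perturbed} functional.

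The fallback you mention in one sentence is the correct argument, and it is what the paper does. Apply Proposition \ref{22072013} with
$J=I_{\widetilde\eta_n}$, $c=c_{\widetilde\eta_n}$, $\varepsilon_n=\eta-\widetilde\eta_n$,
$\mathcal{E}_{\varepsilon_n}=\{E\in\mathcal{E}:\max_E I_{\widetilde\eta_n}\le c_{\widetilde\eta_n}+\varepsilon_n\}$ and
$\delta_n=c_{\widetilde\eta_n}-c_\eta+\varepsilon_n$, so that $\varepsilon_n\le\delta_n\to0$. Take
$\mathcal{T}_n=\{u\in S_\rho: I_\eta(u)\ge c_\eta-\varepsilon_n,\ \|u\|_{H_0^1}\le C_0\}$,
with $C_0$ given by Proposition \ref{2408310238}(ii) for $B=1$; your explicit norm cap is a tidy way to get a closed, bounded set. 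Then:
\begin{itemize}
\item Every $E={\rm Im}(\gamma)\in\mathcal{E}_{\varepsilon_n}$ satisfies \eqref{2509180016}. At a point maximizing $I_\eta$ along $\gamma$ (where $I_\eta\ge c_\eta$), part (ii) gives the norm bound, so $\mathcal{T}_n\cap E\neq\emptyset$.
\item On $\mathcal{T}_n$ one has $I_{\widetilde\eta_n}\ge I_\eta\ge c_{\widetilde\eta_n}-\delta_n$.
\end{itemize}

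Two further points must be made explicit.
\begin{itemize}
\item $E_n^*$ must lie in this new $\mathcal{E}_{\varepsilon_n}$. So you need a path in $S_\rho^+$ with $\max I_{\widetilde\eta_n}(\gamma_n)\le c_{\widetilde\eta_n}+(\eta-\widetilde\eta_n)$. This is what the proof of Proposition \ref{2408310238}(iii) actually builds (the display \eqref{2408310308} there should read $I_{\widetilde\eta}$). Its statement only records the weaker $I_\eta$-bound you used.
\item The output is an almost critical point of $I_{\widetilde\eta_n}$, not of $I_\eta$. Passing to $I_\eta$ is not an absorption into $\delta_n$; it uses boundedness of $u_n$:
$\|I'_\eta|_{S_\rho}(u_n)-I'_{\widetilde\eta_n}|_{S_\rho}(u_n)\|\le(\eta-\widetilde\eta_n)\|f(u_n)\|_{H^{-1}}\to0$,
$|I_\eta(u_n)-I_{\widetilde\eta_n}(u_n)|\to0$, and $c_{\widetilde\eta_n}\to c_\eta$.
\end{itemize}
Your last paragraph (boundedness from ${\rm dist}(u_n,\mathcal{T}_n)$, and $\|u_n^-\|_{L^2}\to0$ from ${\rm dist}(u_n,E_n^*)$) then goes through unchanged.
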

\begin{proof}
By Proposition \ref{2408310238}, we obtain an increasing sequence $\{\eta_n\}$ converging to $\eta$ and a sequence $\{\gamma_n\}$ satisfying the conclusions of Proposition \ref{2408310238}(iii). Let $\varepsilon_n=\eta-\eta_n$, $\delta_n=c_{\eta_n}-c_{\eta}+\eta-\eta_n$. Apply Proposition \ref{22072013} with
\[
 \begin{cases}
 J=I_{\eta_n},\\
  c=c_{\eta_n},\\
  X=S_\rho,\\
   K=\{v_1, v_{\varepsilon_0}\},\\
    E^*=E_n^*:=\{\gamma_n(t): t\in [0,1]\},\\
        \mathcal{E}=\{{\rm Im}(\gamma): {\gamma}\in\Gamma\}, \\
     \mathcal{E}_{\varepsilon}=\mathcal{E}_{\varepsilon_n}:=\{E\in\mathcal{E}: \max_{u\in E}I_{\eta_n}(u)\le c_{\eta_n}+\varepsilon_n\},
 \end{cases}
 \]
 and
\[
\mathcal{T}=\mathcal{T}_n:=\overline{\{u\in S_\rho: I_{\eta}(u)\ge c_\eta-\varepsilon_n\;\;\mbox{with } u\in E,\;\mbox{for some}\; E\in\mathcal{E}_{\varepsilon_n}\}}.
\]
For any $u\in \mathcal{T}_{n}$, there exist some $\widetilde{E}\in \mathcal{E}_{\varepsilon_n}$ with $\widetilde{E}={\rm Im}(\widetilde{\gamma})$ and $t_0\in (0, 1)$ such that $u=\widetilde{\gamma}(t_0)$. Hence
 \[
I_{\eta_n}(u)\ge I_{\eta}(u)\ge c_\eta-\varepsilon_n=c_{\eta_n}-\delta_n.
 \]
 Let $\mathcal{A}:=\cup_{n\ge 1}\mathcal{T}_n$, which is  bounded in $H_0^1(\Omega)$ due to Proposition \ref{2408310238}(ii).

In view of Proposition \ref{22072013},  there exists a sequence $\{u_n\}\subset S_\rho$ such that \eqref{2509171922} hold true. That is,
\begin{equation}\label{2505221203}
| I_{\eta_n}(u_n)-c_{\eta_n}|+\|I_{\eta_n}'|_{S_\rho}(u_n)\|+\text{dist}(u_n, E_n^*)+\mbox{dist}(u_n, \mathcal{T}_n)\le C\sqrt{\delta_n}.
\end{equation}
As a result, $\{u_n\}$ is bounded in $H_0^1(\Omega)$ since $\mathcal{A}$ is  bounded. On the other hand, due to Proposition \ref{2408310238}(i), we get $\delta_n\to 0$ and $\text{dist}(u_n, E_n^*)\to 0$,  combining $E_n^*\subset S_\rho^+$,  it holds that $\|u_n^{-}\|_{L^2(\Omega)}\to 0$ as $n\to\infty$. Finally, the first two terms of \eqref{2408311548} will hold since $\eta_n\to\eta$ and $c_{\eta_n}\to c_\eta$.
\end{proof}

\begin{lemma}\label{2509121227}
Assume that $\Omega$ is a star-shaped domain. Then for any $\delta>0$, there exist $\widehat{\rho}_1>0$, $\eta_1<1$ such that  if $\rho<\widehat{\rho}_1$ and $\eta\in (\eta_1, 1)$,  we have that for any solution  $(\omega, u)$ of \begin{equation}\label{0410031}
\begin{cases}
\begin{aligned}
&-\Delta u+\omega u=\eta\mu |u|^{p-2}u+\eta |u|^{2^*-2}u \quad &&\text { in } \Omega, \\
&u=0  &&\text { on } \partial\Omega, \\
&\|u\|_2^2=\rho>0,
\end{aligned}
\end{cases}
\end{equation}
there holds
\[
I_\eta(u)>
\begin{cases}
\left(\frac{\lambda_1}{N}-\delta\right)\rho\quad&\mbox{if}\;\; 2<p\le 2+\frac{4}{N},\\
\left(\frac{1}{2}-\frac{2}{N(p-2)}\right)\lambda_1\rho\quad&\mbox{if}\;\; 2+\frac{4}{N}<p<2^*.\\
\end{cases}
\]
\end{lemma}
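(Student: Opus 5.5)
The plan is to combine the Nehari-type identity obtained by testing \eqref{0410031} with $u$ and the Pohozaev identity \eqref{2408291040}. On a star-shaped domain the left-hand side of \eqref{2408291040} is nonnegative, since $\sigma\cdot\nu\ge 0$ on $\partial\Omega$. Eliminating the Lagrange multiplier $\omega$ then gives a lower bound for $\|\nabla u\|_2^2$ in terms of the nonlinear terms. Substituting this bound into $I_\eta(u)$ and using $\|\nabla u\|_2^2\ge\lambda_1\rho$ should give the claim. Throughout, $\mu\ge0$ as in this section.

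\textbf{Step 1: eliminate $\omega$.} Write $A=\|\nabla u\|_2^2$, $P=\mu\|u\|_p^p$ and $Q=\|u\|_{2^*}^{2^*}$. Testing the equation with $u$ gives $\omega\rho=\eta(P+Q)-A$. Inserting this into \eqref{2408291040} together with $\int_{\partial\Omega}|\nabla u|^2\sigma\cdot\nu\,\ud\sigma\ge0$ gives
\[
0\le N\eta\Big(\frac{P}{p}+\frac{Q}{2^*}\Big)-\frac N2\big(\eta(P+Q)-A\big)-\frac{N-2}{2}A .
\]
Since $\frac{N}{2^*}-\frac N2=-1$, this simplifies to
\[
A\ \ge\ \eta\,\frac{N(p-2)}{2p}\,P+\eta\, Q .
\]
We also have $A\ge\lambda_1\rho$. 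This inequality is strict, because $u$ cannot be a multiple of $\varphi_1$: a multiple of $\varphi_1$ would force $P+Q=0$, hence $u=0$.

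\textbf{Step 2: the case $2+\frac4N<p<2^*$.} Write $\theta:=\frac{2}{N(p-2)}<\frac12$. Then $\frac{\eta}{p}P=\theta\cdot\eta\frac{N(p-2)}{2p}P$. Moreover $\frac{1}{2^*}=\frac{N-2}{2N}\le\theta$, because $(N-2)(p-2)\le4$ is equivalent to $p\le2^*$. Hence
\[
I_\eta(u)=\frac A2-\frac{\eta}{p}P-\frac{\eta}{2^*}Q\ \ge\ \frac A2-\theta\Big(\eta\frac{N(p-2)}{2p}P+\eta Q\Big)\ \ge\ \Big(\frac12-\theta\Big)A\ >\ \Big(\frac12-\frac{2}{N(p-2)}\Big)\lambda_1\rho .
\]
No smallness of $\rho$ is needed in this case.

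\textbf{Step 3: the case $2<p\le2+\frac4N$, which I expect to be the main obstacle.} Here the $P$-term cannot be absorbed by the Pohozaev inequality, so the Gagliardo--Nirenberg inequality and small $\rho$ must be used. I would first use Step 1 only to control $Q$:
\[
\frac{\eta}{2^*}Q=\frac{N-2}{2N}\eta Q\le\frac{N-2}{2N}\Big(A-\eta\frac{N(p-2)}{2p}P\Big).
\]
This gives
\[
I_\eta(u)\ge\frac AN-\frac{\eta}{p}\Big(1-\frac{(N-2)(p-2)}{4}\Big)P\ge\frac AN-C\mu\|u\|_p^p ,
\]
with $C$ independent of $\eta\le1$. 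By \eqref{050502},
\[
\|u\|_p^p\le\mathcal C_{N,p}^p\,A^{\kappa}\rho^{\frac p2-\kappa},\qquad \kappa:=\frac{p\gamma_p}{2}=\frac{N(p-2)}{4}\in(0,1].
\]
If $\kappa=1$, that is $p=2+\frac4N$, the error term is $C\rho^{2/N}A$. Choosing $\rho$ small makes this at most $\delta' A$, and then $I_\eta(u)\ge(\frac1N-\delta')A\ge(\frac1N-\delta')\lambda_1\rho$. If $\kappa<1$, Young's inequality gives
\[
CA^\kappa\rho^{\frac p2-\kappa}\le\delta' A+C_{\delta'}\rho^{\frac{p/2-\kappa}{1-\kappa}} ,
\]
where the exponent $\frac{p/2-\kappa}{1-\kappa}>1$ because $p>2$. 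Hence
\[
I_\eta(u)\ge\Big(\frac1N-\delta'\Big)\lambda_1\rho-C_{\delta'}\rho^{1+\epsilon}
\]
for some $\epsilon>0$, and this exceeds $(\frac{\lambda_1}{N}-\delta)\rho$ once $\delta'$ is chosen small and then $\rho<\widehat\rho_1(\delta)$.

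The restriction $\eta\in(\eta_1,1)$ only serves to keep $\eta$ bounded above and below, so all constants are uniform in $\eta$. The delicate point is Step 3: $A$ is not a priori bounded, so the absorption must be done in a way that is linear in $A$. The Young/Gagliardo--Nirenberg argument above handles this precisely because $\kappa\le1$.
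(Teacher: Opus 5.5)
Your proposal is correct and follows essentially the same route as the paper. It combines the Nehari identity with the star-shaped Pohozaev inequality to eliminate $\omega$, absorbs the $p$-term using $\theta=\frac{2}{N(p-2)}\ge\frac{1}{2^*}$ when $p>2+\frac4N$, and uses the Gagliardo--Nirenberg inequality with small $\rho$ when $2<p\le 2+\frac4N$. The differences are only cosmetic: you eliminate $\omega$ before controlling $Q$, which gives the same estimate $I_\eta(u)\ge \frac1N\|\nabla u\|_2^2-\frac{(N-2)(2^*-p)}{4p}\mu\eta\|u\|_p^p$ that the paper obtains by bounding $\omega\rho$. You also use Young's inequality where the paper factors out $\|\nabla u\|_2^{p\gamma_p}$, and you take strictness from $\|\nabla u\|_2^2>\lambda_1\rho$ rather than from $Q>0$.
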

\begin{proof}
This proof will be separated into three cases.

{\it Case $2<p< 2+\frac{4}{N}$}. Since $(\omega, u)$ is a solution of \eqref{0410031}, we easily get
\begin{equation}\label{2509101615}
\|\nabla u\|_2^2+\omega\rho=\eta\int_{\Omega}f(u)u\ud x.
\end{equation}
Since $\Omega$ is a star-shaped domain, we get from Pohozaev identity that
\begin{equation}\label{2509111048}
0< N\left[\eta\int_{\Omega}F(u)\ud x-\frac{\omega\rho}{2}\right]-\frac{N-2}{2}\int_{\Omega}|\nabla u|^2 \ud x,
\end{equation}
which combining \eqref{2509101615} deduces
\begin{equation}\label{24082910400}
\left(\frac{N}{N-2}-1\right)\omega\rho\le \frac{2N}{N-2}\eta\int_{\Omega}F(u)\ud x-\eta\int_{\Omega}f(u)u \ud x=\frac{2^*-p}{p}\mu\eta\|u\|_p^p.
\end{equation}
Using \eqref{2509101615}, \eqref{24082910400}, Gagliardo-Nirenberg inequality and $p>2$, we obtain that for any $\delta>0$, as $\rho$ is small enough, 
\begin{equation}\label{25091401}
\begin{aligned}
I_\eta(u)&=\frac{1}{2}\|\nabla u\|_2^2-\frac{\mu}{p}\eta\|u\|_p^p-\frac{\eta}{2^*}\|u\|_{2^*}^{2^*}\\
&= \frac{1}{N}\|\nabla u\|_2^2-\frac{\omega\rho}{2^*}-\left(\frac{1}{p}-\frac{1}{2^*}\right)\mu\eta\|u\|_p^p\\
&\ge \frac{1}{N}\|\nabla u\|_2^2-\widetilde{C}\mu\eta\|u\|_p^p\\
&\ge \frac{1}{N}\|\nabla u\|_2^2-C\mu\eta\rho^{\frac{p-p\gamma_p}{2}}\|\nabla u\|_2^{p\gamma_p}\\
&\ge \|\nabla u\|_2^{p\gamma_p}\left(\frac{1}{N}\|\nabla u\|_2^{2-p\gamma_p}-C\mu\eta\rho^{\frac{p-p\gamma_p}{2}}\right)\\
&\ge\left(\frac{\lambda_1}{N}-\delta\right)\rho,\\
\end{aligned}
\end{equation}
 where $C$, $\widetilde{C}$ depend only on $N$ and $p$. 
 
 \smallskip
{\it Case $p=2+\frac{4}{N}$}. By a similar argument in \eqref{25091401} and $p\gamma_p=2$, if $\frac{1}{N}>C\mu\eta\rho^{\frac{2}{N}}$, we have
\begin{equation*}
I_\eta(u)\ge \frac{1}{N}\|\nabla u\|_2^2-C\mu\eta\rho^{\frac{2}{N}}\|\nabla u\|_2^{2}
\ge\left(\frac{1}{N}-C\mu\eta\rho^{\frac{2}{N}}\right)\lambda_1\rho.
\end{equation*}
Then, when $\rho$ is small enough, there still holds that $I_\eta(u)>\left(\frac{\lambda_1}{N}-\delta\right)\rho$.

\smallskip
{\it Case $2+\frac{4}{N}<p<2^*$}.
Using \eqref{2509101615} and \eqref{24082910400} again,  eliminating the common term $\omega\rho$, we get
\[
\begin{aligned}
\frac{N(p-2)}{2p}\mu\eta \|u\|_p^p< \|\nabla u\|_2^2-\eta\|u\|_{2^*}^{2^*}.
\end{aligned}
\]
Thus
\begin{equation*}\label{2509110051}
\begin{aligned}
I_\eta(u)&=\frac{1}{2}\|\nabla u\|_2^2-\frac{\mu\eta}{p}\|u\|_p^p-\frac{\eta}{2^*}\|u\|_{2^*}^{2^*}\\
&\ge \left(\frac{1}{2}-\frac{2}{N(p-2)}\right)\|\nabla u\|_2^2+\left(\frac{2}{N(p-2)}-\frac{1}{2^*}\right)\eta\|u\|_{2^*}^{2^*}\\
&>\left(\frac{1}{2}-\frac{2}{N(p-2)}\right)\|\nabla u\|_2^2\\
&\ge \left(\frac{1}{2}-\frac{2}{N(p-2)}\right)\lambda_1\rho.
\end{aligned}
\end{equation*}
The proof is done.
\end{proof}

\begin{lemma}\label{2409010003}
Assume that $N\ge 4$, $2<p<2^*$, or $N=3$ and \eqref{25092601} holds. Then there exist $\rho^*>0$ and $\eta^*\in (0, 1)$ such that  if  $\eta\in (\eta^*, 1)$ and $c_{\eta}$ is differentiable at $\eta$, then for any $\rho<\rho^*$, there exists a  normalized solution $(\omega, u)$ to the equation
\begin{equation}\label{2408311628}
\begin{cases}
-\Delta u+\omega u=\eta f(u),~ u>0 \;\text { in } \Omega, \\
\|u\|^2_{L^2(\Omega)}=\rho,\\
\end{cases}
\end{equation}
and $I_\eta(u)=c_\eta$.
\end{lemma}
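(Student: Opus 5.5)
Take the bounded sequence $\{u_n\}\subset S_\rho$ from Lemma \ref{2408312351}, with $I_\eta'|_{S_\rho}(u_n)\to0$, $I_\eta(u_n)\to c_\eta$ and $\|u_n^-\|_2\to0$, and show that it converges strongly in $H_0^1(\Omega)$ to a positive solution at level $c_\eta$.

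\emph{Step 1: Lagrange multipliers.} Since $\{u_n\}$ is bounded, the multipliers
\[
\omega_n:=-\rho^{-1}\Big(\|\nabla u_n\|_2^2-\eta\int_\Omega f(u_n)u_n\Big)
\]
are bounded. Along a subsequence we may assume $\omega_n\to\omega$, $u_n\rightharpoonup u$ in $H_0^1(\Omega)$, and $u_n\to u$ in $L^r(\Omega)$ for $r<2^*$. Then $\|u\|_2^2=\rho$, $u\ge0$, and $u$ weakly solves $-\Delta u+\omega u=\eta f(u)$. The strong maximum principle then gives $u>0$, since $u\not\equiv0$.

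\emph{Step 2: compactness by bubble splitting.} Set $w_n=u_n-u$. By Brézis--Lieb, and since the $L^p$ and $L^2$ terms converge,
\[
\|\nabla w_n\|_2^2-\eta\|w_n\|_{2^*}^{2^*}\to0,\qquad I_\eta(u_n)=I_\eta(u)+\tfrac12\|\nabla w_n\|_2^2-\tfrac{\eta}{2^*}\|w_n\|_{2^*}^{2^*}+o(1).
\]
If $\|\nabla w_n\|_2^2\to\ell>0$, the Sobolev inequality gives $\ell\ge \mathcal S^{N/2}\eta^{1-N/2}$. Hence
\[
c_\eta\ge I_\eta(u)+\tfrac1N\mathcal S^{N/2}\eta^{1-N/2}.
\]
Combined with Lemma \ref{2509121247}, this yields $I_\eta(u)\le h_\eta(\rho)$. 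On the other hand, $u$ solves \eqref{0410031} and $\Omega$ is star-shaped, so Lemma \ref{2509121227} gives $I_\eta(u)>\kappa\rho$. Here $\kappa=\lambda_1/N-\delta$ if $p\le 2+\frac4N$, and $\kappa=(\frac12-\frac2{N(p-2)})\lambda_1$ otherwise.

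\emph{Step 3: the contradiction.} We need $h_\eta(\rho)\le\kappa\rho$ for $\rho$ small. For $N\ge5$ we have $h_\eta(\rho)=O(\rho^{(N-2)/2})=o(\rho)$. For $N=4$, $h_\eta(\rho)=C\rho|\ln(\rho\eta)|^{-1}=o(\rho)$. So for $N\ge4$ the contradiction holds for all $2<p<2^*$, with $\rho^*$ small and $\eta^*$ close to $1$.

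The main obstacle is $N=3$. There $h_\eta(\rho)=\frac14\lambda_1(B_R)\rho+C\rho^2$, which is only linear in $\rho$. We take $R=R_\Omega$, or slightly smaller, and compare $\frac14\lambda_1(B_{R})$ with $\kappa$:
\begin{itemize}
\item For $p\le 10/3$, $\kappa=\lambda_1(\Omega)/3-\delta$, and $\frac14\lambda_1(B_R)<\frac13\lambda_1(\Omega)$ is exactly the first case of \eqref{25092601}.
\item For $p>14/3$, $\kappa=(\frac12-\frac{2}{3(p-2)})\lambda_1(\Omega)$, which matches the second case of \eqref{25092601}.
\end{itemize}
In both cases we choose $\delta$ and $\rho$ small and then use the strict inequality, taking $R<R_\Omega$ close enough that it survives. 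This contradicts $\ell>0$, so $\ell=0$, i.e.\ $u_n\to u$ strongly and $I_\eta(u)=c_\eta$.

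Finally, we check that $c_\eta$ is finite and exceeds the endpoint values, via Lemma \ref{2509112067}, so that Proposition \ref{22072013} applies. The thresholds $\rho^*,\eta^*$ are obtained by intersecting the smallness conditions from Lemmas \ref{2509112067}, \ref{2509121247} and \ref{2509121227}.
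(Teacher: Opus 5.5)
Your proposal is correct and follows essentially the same route as the paper: take the bounded sequence from Lemma \ref{2408312351}, pass to a weak limit with bounded multipliers, and obtain $u>0$ by the maximum principle. If convergence is not strong, the splitting gives $\liminf I_\eta(u_n)\ge I_\eta(u)+\frac1N\mathcal S^{N/2}\eta^{1-N/2}$, and this contradicts Lemmas \ref{2509121247} and \ref{2509121227}, using \eqref{25092601} when $N=3$. You derive the splitting inequality directly from Br\'ezis--Lieb and the Sobolev inequality rather than citing Pierotti--Verzini--Yu, and you explicitly choose $R<R_\Omega$ close to $R_\Omega$ so that $\overline B_R\subset\Omega$; both are minor refinements of the paper's argument.
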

\begin{proof}
By Lemma \ref{2408312351}, there exists a sequence $\{(\omega_n, u_n)\}\subset \mathbb{R}\times S_\rho$ such that $\{u_n\}$ is bounded in $H_0^1(\Omega)$, and
\begin{equation}\label{2408311617}
I_\eta(u_n)\to c_\eta\quad\mbox{and}\quad I_{\eta}'(u_n)-\omega_n u_n\to 0\quad \mbox{ in } H^{-1}(\Omega).
\end{equation}
Up to a subsequence, there exists some $u_0\in H_0^1(\Omega)$ such that
\begin{equation*}
\begin{aligned}
{u}_n&\rightharpoonup u_0 ~~\text{ in } H^1_0(\Omega) \text{ and } L^{2^{*}}(\Omega), \\
{u}_n&\rightarrow u_0 ~~\text{ in } L^{r}(\Omega),\;\; r\in [1, 2^*),\\
{u}_n&\rightarrow u_0 ~~{\text{ a.e.}\text{ in }\Omega,}
\end{aligned}
\end{equation*}
which combining \eqref{2408311548} deduces that $u_0\in S_\rho^+$. By \eqref{2408311617}, we have
\[
w_n\rho=\langle I_{\eta}'(u_n), u_n\rangle +o(1).
\]
So $\{\omega_n\}$ is bounded. Hence there exists a subsequence, still denoted by $\{\omega_n\}$, which converges to some $\omega_0$. By ${u}_n\rightharpoonup u_0$ in $H^1_0(\Omega)$ and \eqref{2408311617}, $(\omega_0, u_0)$ is a solution of \eqref{2408311628}.
By maximum principle, $u_0$ is positive. Using Lemma \ref{2509121227}, for any $\delta>0$, provided $\rho$ is small,
\[
I_\eta(u_0)>
\begin{cases}
\left(\frac{\lambda_1(\Omega)}{N}-\delta\right)\rho\quad&\mbox{if}\;\; 2< p\le 2+\frac{4}{N},\\
\left(\frac{1}{2}-\frac{2}{N(p-2)}\right)\lambda_1(\Omega)\rho\quad&\mbox{if}\;\; 2+\frac{4}{N}< p<2^*.\\
\end{cases}
\]

Next, we assume that ${u}_n\rightharpoonup u_0$ in $H^1_0(\Omega)$ is not strong. Proceeding as the proof of \cite[Corollary 2.5]{Pierotti25}, we can obtain
\begin{equation}\label{2511051425}
\underset{n\to\infty}{\lim\inf}I_\eta(u_n)\ge I_\eta(u_0)+\frac{1}{N}{\mathcal{S}}^{\frac{N}{2}}\eta^{1-\frac{N}{2}}.
\end{equation}

When $N\ge 4$, by Lemma \ref{2509121247} and Lemma \ref{2509121227},  this will be a contradiction provided $\rho$ is sufficiently small. 

When $N=3$, if $\frac{14}{3}< p<6$, when $\rho$ is small, arguing as above and using \eqref{25092601}, we can reach a contradiction. If $2< p\le \frac{10}{3}$, we can let $\delta$ be small such that $\frac{\lambda_1(\Omega)}{3}-\delta>\frac{\lambda_1(B_{R_\Omega})}{4}$. Combining Lemmas \ref{2509121247}, \ref{2509121227}, \eqref{25092601} and \eqref{2511051425}, when $\rho$ is small, we still get a contradiction.
\end{proof}
\begin{proposition}\label{2408302031}
For $\rho<\rho^*$, there exists a sequence $\{(\eta_n, u_n)\}\subset (0, 1)\times S_\rho^+$ such that
\begin{itemize}
\item[\rm (i)] $\{\eta_n\}$ is increasing and tends to 1;
\item[\rm (ii)] $I_{\eta_n}'|_{S_\rho}(u_n)=0$;
\item[\rm (iii)] $I_{\eta_n}(u_n)=c_{\eta_n}$. In particular, $\lim\limits_{n\to\infty}{c}_{\eta_n}=c_{1}$,
\end{itemize}
where $\rho^*$ is given in Lemma \ref{2409010003}.
\end{proposition}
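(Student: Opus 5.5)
The plan is to combine the almost-everywhere differentiability of $\eta\mapsto c_\eta$ with Lemma \ref{2409010003} and the left-continuity from Proposition \ref{2408310238}(i).

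First, fix $\rho<\rho^*$ and also $\rho<\min\{\widetilde{\rho}_0,\widehat{\rho}_0,\widehat{\rho}_1\}$, so that the mountain pass geometry of Lemma \ref{2509112067} holds. This makes $c_\eta$ well defined for every $\eta\in(\eta_0,1]$. Set $\bar\eta:=\max\{\eta_0,\eta^*\}<1$. The map $\eta\mapsto c_\eta$ is non-increasing on $(\bar\eta,1]$. By Lebesgue's theorem on monotone functions, the set $D\subset(\bar\eta,1)$ of points where $c_\eta$ is differentiable has full measure. In particular $D$ is dense near $1$. So I can choose inductively an increasing sequence $\eta_n\in D$ with $\eta_n\in(\max\{\eta_{n-1},1-\frac1n\},1)$. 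Then $\eta_n\uparrow 1$, which gives (i).

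Second, for each $n$ I apply Lemma \ref{2409010003} at $\eta=\eta_n$. This is legitimate because $\eta_n\in(\eta^*,1)$, $c_{\eta}$ is differentiable at $\eta_n$, and $\rho<\rho^*$. The lemma yields $(\omega_n,u_n)$ with $u_n>0$ solving \eqref{2408311628} with $\eta=\eta_n$, $\|u_n\|_2^2=\rho$ and $I_{\eta_n}(u_n)=c_{\eta_n}$. Since $u_n$ is a weak solution of $-\Delta u_n+\omega_n u_n=\eta_n f(u_n)$, the Lagrange multiplier rule gives $I_{\eta_n}'(u_n)=-\omega_n u_n$ on $H^{-1}(\Omega)$. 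Hence the tangential derivative vanishes, i.e. $I_{\eta_n}'|_{S_\rho}(u_n)=0$, which is (ii). Since $u_n>0$, we have $u_n\in S_\rho^+$.

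Third, for (iii) I need $\lim_n c_{\eta_n}=c_1$. This is exactly left-continuity of $c_\eta$ at $\eta=1$. Proposition \ref{2408310238}(i) gives this, since its argument applies at any $\eta\in(\eta_0,1]$, including $\eta=1$: the chosen path $\overline{\gamma}$ has compact image, so $\max_t\int_\Omega F(\overline\gamma(t))\,\ud x<\infty$. Monotonicity also gives $c_{\eta_n}\ge c_1$ and $c_{\eta_n}$ non-increasing, so the limit exists and equals $c_1$.

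The main obstacle is only bookkeeping. One must ensure that $\rho^*$ in Lemma \ref{2409010003} is independent of $\eta$, which that lemma asserts, so that a single $\rho$ works for the whole sequence. One must also check that the path class $\Gamma$, and hence $c_\eta$, is the same for all $\eta\in(\eta_0,1]$; this holds because the endpoints $v_1,v_{\varepsilon_0}$ in Lemma \ref{2509112067} are chosen uniformly in $\eta\in(\eta_0,1]$. Once these uniformities are confirmed, the statement follows directly.
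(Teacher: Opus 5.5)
Your proposal is correct and matches the paper's proof. Both use a.e.\ differentiability of the non-increasing map $\eta\mapsto c_\eta$ to pick an increasing sequence $\eta_n\uparrow 1$ of differentiability points, apply Lemma \ref{2409010003} at each $\eta_n$ to get (ii)--(iii), and use the left-continuity of Proposition \ref{2408310238}(i) at $\eta=1$ to get $c_{\eta_n}\to c_1$. Your added bookkeeping is harmless: the choice $\eta_n>\max\{\eta_0,\eta^*\}$, the uniformity of $\Gamma$ and $\rho^*$ in $\eta$, and the further smallness of $\rho$ (which $\rho^*$ must already satisfy for $c_\eta$ to be defined) only make explicit what the paper leaves implicit.
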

\begin{proof}
As we mentioned before, $c_{\eta}$ is non-increasing in $\eta$, so $c_{\eta}$ is almost everywhere differentiable with respect to $\eta$. Hence, there exists an increasing sequence $\{\eta_n\}$ converging to 1, and such that $c_\eta$ is differentiable at every $\eta_n$. By Lemma \ref{2409010003}, there exists  $u_n$ satisfying $I_{\eta_n}'|_{S_\rho}(u_n)=0$ and $I_{\eta_n}(u_n)=c_{\eta_n}$. Finally, by Proposition \ref{2408310238}(i), we obtain $c_{\eta_n}\to c_1$.
\end{proof}

\subsection{Proof of Theorem \ref{T041805}}

Let $\{(\eta_n, u_n)\}\subset (0, 1)\times S_\rho^+$ be given in proposition \ref{2408302031}. From  $I_{\eta_n}'|_{S_\rho}(u_n)=0$, we get that there exists a sequence of multipliers $\{\omega_n\}\subset \mathbb{R}$ such that
\begin{equation}\label{2505231850}
-\Delta u_n+\omega_n u_n=\eta_nf(u_n) \quad\text { in } \Omega.
\end{equation}
The equation above can deduce the Pohozaev identity \eqref{2408291040} 
and the Nehari-type identity
\begin{equation}\label{2408291041}
\int_{\Omega}|\nabla u_n|^2 \ud x+\omega_n\rho=\eta_n\int_{\Omega}f(u_n)u_n \ud x.
\end{equation}

 Note that multiplying \eqref{2505231850} by $\varphi_{1}$ and integrating over $\Omega$, we deduce that
\begin{equation*}
\begin{aligned}
(\lambda_1(\Omega)+\omega_n)\int_{\Omega} u_n\varphi_{1} \ud x
=\eta_n\int_{\Omega}f(u_n)\varphi_{1} \ud x>0.
\end{aligned}
\end{equation*}
Thus
\begin{equation}\label{2510092110}
\omega_n>-\lambda_1(\Omega).
\end{equation}

\begin{proposition}\label{2505231848}
 Assume that $\Omega$ is a star-shaped domain. Let $N\geq 3$, $\mu\ge 0$, $\alpha>\frac{\|\nabla U_1\|_2^2}{\|U_1\|_2^2}$, and $\{(\eta_n, u_n)\}\subset (0, 1)\times S_\rho^+$ be given by Proposition \ref{2408302031}.
Then $\{u_n\}$ is bounded in $H_0^1(\Omega)$.
\end{proposition}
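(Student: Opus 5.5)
We combine the Nehari-type identity \eqref{2408291041} with the Pohozaev identity \eqref{2408291040}. Since $\Omega$ is star-shaped, $\sigma\cdot\nu\ge 0$ on $\partial\Omega$, so the left side of \eqref{2408291040} is nonnegative. Write $a_n:=\|\nabla u_n\|_2^2$, $b_n:=\mu\|u_n\|_p^p$ and $d_n:=\|u_n\|_{2^*}^{2^*}$, all nonnegative because $\mu\ge0$. Then \eqref{2408291040} gives
\[
\frac{N-2}{2}a_n\le N\eta_n\Big(\frac{b_n}{p}+\frac{d_n}{2^*}\Big)-\frac{N}{2}\omega_n\rho ,
\]
and \eqref{2408291041} gives $a_n+\omega_n\rho=\eta_n(b_n+d_n)$. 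The energy identity is $c_{\eta_n}=\frac12a_n-\eta_n\big(\frac{b_n}{p}+\frac{d_n}{2^*}\big)$. By Proposition \ref{2408302031}(iii) and Lemma \ref{2509121247}, $c_{\eta_n}$ is bounded above uniformly in $n$, since $c_{\eta_n}\le c_{\eta_0}$ by monotonicity.

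\textbf{Step 1: eliminate $d_n$.} Use the energy identity to write $\eta_n d_n/2^*=\frac12 a_n-c_{\eta_n}-\eta_n b_n/p$, and substitute this into the Pohozaev inequality. The $a_n$ terms cancel exactly because $N/2-(N-2)/2=1$, which yields
\[
\frac{N}{2}\omega_n\rho\le N\Big(\frac12 a_n-c_{\eta_n}\Big)-\frac{N-2}{2}a_n = a_n-Nc_{\eta_n},
\]
that is, $\omega_n\rho\le \frac{2}{N}a_n-2c_{\eta_n}$.

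\textbf{Step 2: bound $a_n$ using the Nehari identity.} Substitute $d_n$ from the energy identity into the Nehari identity. This gives
\[
a_n+\omega_n\rho=\eta_n b_n+2^*\Big(\frac12a_n-c_{\eta_n}-\frac{\eta_n b_n}{p}\Big),
\]
hence
\[
\Big(\frac{2^*}{2}-1\Big)a_n=\omega_n\rho+2^*c_{\eta_n}+\Big(\frac{2^*}{p}-1\Big)\eta_n b_n .
\]
Inserting Step 1 gives
\[
\Big(\frac{2}{N-2}-\frac2N\Big)a_n\le (2^*-2)c_{\eta_n}+\Big(\frac{2^*}{p}-1\Big)\eta_n b_n .
\]
The coefficient on the left is $\frac{4}{N(N-2)}>0$.

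\textbf{Step 3: control $b_n$.} By Gagliardo--Nirenberg, $b_n\le C\mu\rho^{p(1-\gamma_p)/2}a_n^{p\gamma_p/2}$, where $p\gamma_p/2<2^*/2$. This is the main obstacle. If $p\gamma_p<2$, i.e. $p<2+\frac4N$, the term is sublinear in $a_n$ and is absorbed by Young's inequality, so $a_n$ is bounded. For $p\ge 2+\frac4N$ the growth is superlinear, and a more careful combination is needed. Instead of using Step 1 for $\omega_n$, use \eqref{24082910400} directly: it reads $\frac{2}{N-2}\omega_n\rho\le\frac{2^*-p}{p}\eta_n b_n$ and bounds $\omega_n$ above by $b_n$. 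Combined with \eqref{2510092110}, $\omega_n>-\lambda_1$, and with the energy identity, this gives relations from which $b_n$ can be eliminated, as done in the case $p>2+\frac4N$ of Lemma \ref{2509121227}. That lemma shows
\[
\frac{N(p-2)}{2p}\eta_n b_n\le a_n-\eta_n d_n .
\]
Combine this with $c_{\eta_n}=\frac12 a_n-\eta_n\frac{b_n}{p}-\eta_n\frac{d_n}{2^*}$ to get
\[
c_{\eta_n}\ge\Big(\frac12-\frac{2}{N(p-2)}\Big)a_n+\Big(\frac{2}{N(p-2)}-\frac1{2^*}\Big)\eta_n d_n .
\]
When $p>2+\frac4N$, both coefficients are positive: $\frac{2}{N(p-2)}>\frac{N-2}{2N}$ since $p<2^*$. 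Hence $a_n\le Cc_{\eta_n}$ is bounded. In the borderline case $p=2+\frac4N$, the first coefficient vanishes, but the term $d_n$ is controlled. Then Nehari together with $\omega_n>-\lambda_1$ and the Gagliardo--Nirenberg estimate for $b_n$, whose constant times $\rho^{2/N}$ is small for small $\rho$, closes the bound. Thus, by splitting into the cases $p<$, $=$, $>2+\frac4N$, we obtain $\sup_n\|\nabla u_n\|_2<\infty$.
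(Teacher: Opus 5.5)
Your argument is correct, with one small caveat below, but it takes a different route from the paper.

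\textbf{The paper's route.} The paper argues by contradiction and first bounds $\omega_n$ from above. If $\omega_n\to+\infty$, the star-shaped Pohozaev inequality gives $\omega_n\rho\le C\mu\eta_n\|u_n\|_p^p$, so $\|u_n\|_p^p\to\infty$. H\"older on the bounded domain then makes both $\mu\|u_n\|_p^p$ and $\omega_n\rho$ negligible against $\|u_n\|_{2^*}^{2^*}$ in the Nehari identity. Hence $I_{\eta_n}(u_n)=(\frac1N+o(1))\|\nabla u_n\|_2^2+o(1)\to\infty$, contradicting $I_{\eta_n}(u_n)\to c_1$. Once $\omega_n$ is bounded, eliminating $\|u_n\|_p^p$ via Nehari gives $(\frac12-\frac1p)\|\nabla u_n\|_2^2\le I_{\eta_n}(u_n)+\frac1p\omega_n\rho$. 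This is uniform in $p\in(2,2^*)$ and uses neither Gagliardo--Nirenberg nor smallness of $\rho$.

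\textbf{Your route and what it buys.} You work directly with the combined Pohozaev--Nehari inequality $\frac{N(p-2)}{2p}\eta_nb_n+\eta_nd_n\le a_n$ and the energy identity, splitting at $p=2+\frac4N$. This is more quantitative; for instance, $a_n\le(\frac12-\frac{2}{N(p-2)})^{-1}c_{\eta_n}$ in the supercritical range. The cost is a case split.

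\textbf{The caveat.} In the borderline case your Gagliardo--Nirenberg absorption needs $\eta_n\mathcal{C}_{N,p}^p\mu\rho^{2/N}<1$. That smallness condition is not in the statement, since the sequence exists for every $\rho<\rho^*$. It is easily removed: once you know $d_n$ is bounded, H\"older gives $b_n\le\mu|\Omega|^{1-p/2^*}d_n^{p/2^*}$, and Nehari with $\omega_n>-\lambda_1(\Omega)$ then bounds $a_n$.

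The same observation eliminates the case split entirely. From $\eta_nd_n\le a_n$ and the energy identity you get $\frac1Na_n\le c_{\eta_n}+\frac{\eta_n}{p}b_n\le c_{\eta_n}+\frac{\mu}{p}|\Omega|^{1-p/2^*}a_n^{p/2^*}$, and $p<2^*$ closes the bound.

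\textbf{Minor slips.} $c_{\eta_0}$ is not defined, but $c_{\eta_n}\to c_1$ already gives the uniform bound. In Step~1 it is the $b_n$ terms, not the $a_n$ terms, that cancel.
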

\begin{proof}
We assume by contradiction that up to a subsequence, $\|\nabla u_n\|_2\to \infty$. Since $\Omega$ is a star-shaped domain, we get from \eqref{2408291040} that
\begin{equation*}
0\le N\left[\eta_n\int_{\Omega}F(u_n)\ud x-\frac{\omega_n\rho}{2}\right]-\frac{N-2}{2}\int_{\Omega}|\nabla u_n|^2 \ud x,
\end{equation*}
which combining \eqref{2408291041} deduces
\begin{equation}\label{2408291042}
\left(\frac{N}{N-2}-1\right)\omega_n\rho\le \frac{2N}{N-2}\eta_n\int_{\Omega}F(u_n)\ud x-\eta_n\int_{\Omega}f(u_n)u_n \ud x.
\end{equation}
Suppose that $\{\omega_n\}$ is unbounded, according to \eqref{2510092110}, up to a subsequence, we can assume $\omega_n\to\infty$.  It follows from \eqref{2408291042} that
\[
\omega_n\rho\le \frac{(2^*-p)(N-2)}{2p}\mu\eta_n\|u_n\|_{p}^{p},
\]
which together with \eqref{2408291041} and H\"older inequality, deduces that
\begin{equation}\label{2408291043}
\|\nabla u_n\|_2^2=\|u_n\|_{2^*}^{2^*}+o(1)\|u_n\|_{2^*}^{2^*}.
\end{equation}
On the other hand, by \eqref{2408291043} and $I_{\eta_n}(u_n)\to c_1$, we have
\[
\begin{aligned}
c_1&=I_{\eta_n}(u_n)+o(1)\\
&=\frac{1}{2}\|\nabla u_n\|_2^2-\frac{1}{2^*}\|\nabla u_n\|_2^2+o(1)\|\nabla u_n\|_2^2+o(1)\\
&\to \infty.
\end{aligned}
\]
This is a contradiction. So $\{\omega_n\}$ is bounded. 

By \eqref{2408291041}, we have
\[
\begin{aligned}
\frac{1}{2}\|\nabla u_n\|^2_{2}=&I_{\eta_n}(u_n)
+\eta_n\int_{\Omega}F(u_n) \ud x\\
=&I_{\eta_n}(u_n)+\frac{\mu\eta_n}{p}\|u_n\|_p^p+\frac{\eta_n}{2^*}\|u_n\|_{2^*}^{2^*}\\
=&I_{\eta_n}(u_n)+\frac{1}{p}\left(\|\nabla u_n\|^2_{2}+\omega_n\rho
-\eta_n\|u_n\|_{2^*}^{2^*}\right)
+\frac{\eta_n}{2^*}\|u_n\|_{2^*}^{2^*},
\end{aligned}
\]
thus
\[
\begin{aligned}
\left(\frac{1}{2}-\frac{1}{p}\right)\|\nabla u_n\|^2_{2}
=I_{\eta_n}(u_n)+\frac{1}{p}\omega_n\rho
+\eta_n\left(\frac{1}{2^*}-\frac{1}{p}\right)\|u_n\|_{2^*}^{2^*},
\end{aligned}
\]
which is impossible as $n\rightarrow\infty$. So $\{u_n\}$ is also bounded in $H_0^1(\Omega)$.
\end{proof}
\noindent
{\bf Proof of Theorem \ref{T041805} completed.} The proof will be split into Sobolev critical case and subcritical case.

{\it Case $2<p<q=2^*$. } Using Proposition \ref{2505231848}, we get a sequence $\{u_n\}$, which is bounded in $H_0^1(\Omega)$, and that are the solutions of \eqref{2505231850}. By \eqref{2408291041}, $\{\omega_n\}$ is also bounded. Therefore, up to a subsequence, there is $(\omega, u)\in  \mathbb{R}\times S_\rho $ such that $u_n\rightharpoonup u$ in $H_0^1(\Omega)$, $u_n(x)\to u(x)$ a.e. in $\Omega$ and $\omega_n\to \omega$.
Assume that ${u}_n\rightharpoonup u$ in $H^1_0(\Omega)$ is not strong. Similar to the proof of Lemma \ref{2409010003}, we will get
\[
\underset{n\to\infty}{\lim\inf}I_{\eta_n}(u_n)\ge I(u)+\frac{1}{N}S^{\frac{N}{2}}.
\]
However, by Lemma \ref{2509121247} and Lemma \ref{2509121227} with $\eta=1$, this will be a contradiction provided $\rho$ is sufficiently small.
By a convergence argument and maximum principle, we can conclude that $(\omega, u)$ is a solution of \eqref{041003} satisfying $I(u)=c_1$. Since $c_1\ge \underset{v\in \partial A_\alpha}{\inf}I(v)>\underset{v\in A_\alpha}{\inf}I(v)$, so $u$ is the second solution of \eqref{041003}.

\medskip
{\it Case $2<p<2+\frac{4}{N}<q<2^*$.} For the Sobolev subcritical situation, we consider the energy functionals
\[
I_{\eta}(u)=\frac{1}{2}\int_{\Omega}|\nabla u|^2 \ud x
-\eta\int_{\Omega}\left(\frac{\mu}{p} |u|^p+\frac1{q}|u|^{q}\right)  \ud x,\quad u\in S_\rho\;\;\mbox{and}\;\; \eta>0.
\] 
For the mass supercritical nonlinearities at infinity, it is easy to establish mountain pass structure. That is, $2+\frac{4}{N}< q$ is enough to guarantee mountain pass geometry. Since the embeddings $H_0^1(\Omega)\subset L^p(\Omega)$ and $H_0^1(\Omega)\subset L^q(\Omega)$ are compact, we will not need the bubbling estimate. Still we use the monotonicity trick. As Proposition \ref{2408302031}, with the same notations $c_1$ and $c_\eta$ as before,  there exists a sequence $\{(\eta_n, u_n)\}\subset (0, 1)\times S_\rho^+$ such that
\begin{itemize}
\item[\rm (i)] $\{\eta_n\}$ is increasing and tends to 1;
\item[\rm (ii)] $I_{\eta_n}'|_{S_\rho}(u_n)=0$;
\item[\rm (iii)] $I_{\eta_n}(u_n)=c_{\eta_n}$, and $\lim\limits_{n\to\infty}{c}_{\eta_n}=c_{1}$.
\end{itemize}

For the sake of the boundedness of $\{u_n\}$, we need the constraint $2<p<2+\frac{4}{N}$. Indeed,  we suppose $\|\nabla u_n\|_2\to\infty$. Similar to \eqref{2408291040} and \eqref{2408291041}, we can get that
\begin{equation*}
\eta_n\|u_n\|^q_q=\frac{q}{N(q-2)}\|\nabla u_n\|^2_2
-\frac{q}{N(q-2)}\int_{\partial \Omega}|\nabla u_n|^2\sigma\cdot \nu \ud\sigma
-\frac{(p-2)q}{(q-2)p}\eta_n\mu\|u_n\|^p_p.
\end{equation*}
By \eqref{050502} and $p<2+\frac{4}{N}$, and since $\Omega$ is a star-shaped domain,
\begin{equation*}
\begin{aligned}
I_{\eta_n}(u_n)=&\frac{1}{2}\|\nabla u_n\|^2_2
-\eta_n\left(\frac{\mu}{p}\|u_n\|^{p}_p
+\frac{1}{q}\|u_n\|^{q}_q \right)\\
=&\frac{1}{N(q-2)}\|\nabla u_n\|^2_2
-\frac{q-p}{(q-2)p}\eta_n\mu \|u_n\|^{p}_p
+\frac{1}{N(q-2)}\int_{\partial \Omega}|\nabla u_n|^2\sigma\cdot \nu \ud\sigma\\
\geq&\frac{1}{N(q-2)}\|\nabla u_n\|^2_2
-\frac{q-p}{(q-2)p}\eta_n\mu  \cdot\mathcal{C}^p_{N,p}\|\nabla u_n\|^{p\gamma_{p}}_2
\|u_n\|^{p(1-\gamma_{p})}_2\\
\rightarrow&+\infty
\end{aligned}
\end{equation*}
as $n\to\infty$, which contradicts $I_{\eta_n}(u_n)\to c_1$.

\section{Non-existence for normalized solutions with large mass in a bounded convex domain}\label{S_4}
In this section, we consider the problem \eqref{041003} with $N\ge 3$ and $q\ge 2^*$ in a convex domain. If $u$ is a solution of \eqref{041003}, using moving-plane method, we will see that $u$ is monotonic along some directions around the boundary. This could deduce some nonexistence result of \eqref{041003} when the mass is large. First, we shall estimate the frequency $\omega$ of \eqref{041003} in a star-shaped domain.

\begin{lemma}\label{2410040137}
Assume that $\Omega$ is a smooth star-shaped domain with respect to 0, $\mu>0$, $1<p\le 2$ and $q\ge 2^*$. If $(\omega, u)$ is a solution of \eqref{041003}, then there holds that
\begin{equation}\label{2505261132}
-\lambda_1(\Omega)< \omega\le N\left(\frac{1}{p}-\frac{1}{q}\right)\mu|\Omega|^{\frac{2-p}{2}}\rho^{\frac{p-2}{2}}.
\end{equation}
\end{lemma}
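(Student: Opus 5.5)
The lower bound comes from testing the equation against $\varphi_1$, exactly as in \eqref{2510092110}. Multiplying \eqref{041003} by $\varphi_1$ and integrating by parts gives
\[
(\lambda_1(\Omega)+\omega)\int_\Omega u\varphi_1\,\ud x=\int_\Omega\left(\mu u^{p-1}+u^{q-1}\right)\varphi_1\,\ud x>0 .
\]
Since $u>0$ and $\varphi_1>0$, this yields $\omega>-\lambda_1(\Omega)$. This step is routine.

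For the upper bound I will combine the Nehari identity with the Pohozaev identity, following \eqref{2509101615}--\eqref{24082910400}, but now with a general exponent $q\ge 2^*$. The regularity needed to justify Pohozaev comes from standard elliptic theory for positive solutions in a smooth bounded domain; I take it as given, as the paper implicitly does.

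The Nehari identity reads
\[
\|\nabla u\|_2^2+\omega\rho=\mu\|u\|_p^p+\|u\|_q^q .
\]
Since $\Omega$ is star-shaped with respect to $0$, we have $\sigma\cdot\nu\ge 0$ on $\partial\Omega$, so \eqref{2408291040} with $\eta=1$ gives
\[
\frac{N-2}{2}\|\nabla u\|_2^2+\frac{N\omega\rho}{2}\le \frac{N\mu}{p}\|u\|_p^p+\frac Nq\|u\|_q^q .
\]
I substitute $\|\nabla u\|_2^2$ from Nehari into this inequality. The $\omega$ terms combine to $\left(\frac N2-\frac{N-2}{2}\right)\omega\rho=\omega\rho$, which leaves
\[
\omega\rho\le N\left(\frac1p-\frac{N-2}{2N}\right)\mu\|u\|_p^p+N\left(\frac1q-\frac{N-2}{2N}\right)\|u\|_q^q .
\]
Because $q\ge 2^*$, the coefficient of $\|u\|_q^q$ is nonpositive, so that term can be dropped. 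Using $\frac{N-2}{2N}=\frac1{2^*}\ge\frac1q$, I will bound the remaining coefficient by $N\left(\frac1p-\frac1q\right)$. This gives
\[
\omega\rho\le N\left(\frac1p-\frac1q\right)\mu\|u\|_p^p .
\]
This sign bookkeeping is the only delicate point: one must keep the $q$-term with the right sign, and use $\mu>0$ so that enlarging the coefficient of $\mu\|u\|_p^p$ is legitimate.

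Finally, since $p\le 2$, H\"older's inequality on the bounded domain gives
\[
\|u\|_p^p\le |\Omega|^{\frac{2-p}{2}}\|u\|_2^p=|\Omega|^{\frac{2-p}{2}}\rho^{\frac p2}.
\]
Dividing by $\rho$ then produces exactly \eqref{2505261132}.
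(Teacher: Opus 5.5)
Your proof is correct and uses the same ingredients as the paper: testing with $\varphi_1$ for the lower bound, then the Nehari and Pohozaev identities on a star-shaped domain, followed by H\"older with $p\le 2$.

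The one real difference is which quantity you eliminate. The paper substitutes the Nehari identity for $\|u\|_q^q$ and drops the gradient term because $\frac Nq-\frac{N-2}{2}\le 0$. Carried out exactly, that elimination gives
\[
\frac12\int_{\partial\Omega}|\nabla u|^2\sigma\cdot\nu\,\ud\sigma+\Big(\frac N2-\frac Nq\Big)\omega\rho=N\Big(\frac1p-\frac1q\Big)\mu\|u\|_p^p+\Big(\frac Nq-\frac{N-2}{2}\Big)\|\nabla u\|_2^2 .
\]
The paper writes the coefficient of $\omega\rho$ as $1$, which is exact only for $q=2^*$. For $q>2^*$ its route therefore needs one more harmless step: use $\frac N2-\frac Nq\ge 1$ together with $\mu\|u\|_p^p\ge 0$ to reach the stated bound.

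You instead eliminate $\|\nabla u\|_2^2$ and discard the $\|u\|_q^q$ term via $\frac1q\le\frac1{2^*}$. This gives coefficient exactly $1$ on $\omega\rho$ for every $q\ge 2^*$, so your bookkeeping is the cleaner of the two.

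One minor caveat concerns regularity. When $q>2^*$, standard elliptic theory does not by itself give an $H^1_0$ weak solution the regularity needed for the Pohozaev identity, because the bootstrap breaks down in the supercritical range. You should therefore state this as the assumption that $u$ is a classical solution, which is what the paper tacitly does.
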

\begin{proof}
As \eqref{2510092110}, one can get that $-\lambda_1(\Omega)<\omega$. Next, by the Pohozaev identity
\begin{equation}\label{2505261102}
\frac12\int_{\partial \Omega}|\nabla u|^2\sigma\cdot \nu \ud\sigma=N\left[\frac{\mu\|u\|_p^p}{p}+\frac{\|u\|_{q}^{q}}{q}-\frac{\omega\rho}{2}\right]-\frac{N-2}{2}\int_{\Omega}|\nabla u|^2 \ud x
\end{equation}
and the Nehari-type identity
\begin{equation}\label{2505261103}
\int_{\Omega}|\nabla u|^2 \ud x+\omega\rho=\mu\|u\|_{p}^p+\|u\|_{q}^{q},
\end{equation}
we deduce  that
\[
\begin{aligned}
\frac12\int_{\partial \Omega}|\nabla u|^2\sigma\cdot \nu \ud\sigma+\omega\rho&=N\left(\frac{1}{p}-\frac{1}{q}\right)\mu\|u\|_p^p+\left(\frac{N}{q}-\frac{N-2}{2}\right)\int_{\Omega}|\nabla u|^2 \ud x\\
&\le N\left(\frac{1}{p}-\frac{1}{q}\right)\mu|\Omega|^{\frac{2-p}{2}}\|u\|_2^p\\
&=N\left(\frac{1}{p}-\frac{1}{q}\right)\mu|\Omega|^{\frac{2-p}{2}}\rho^{\frac{p}{2}},
\end{aligned}
\]
which implies the right side of \eqref{2505261132}. The proof is completed.
\end{proof}

Recall the classical moving-plane method as in \cite{GNN1979}. If all sectional curvatures at every point of $\partial\Omega$ are positive, one can carry out the steps of moving-plane method near the boundary. As mentioned in \cite{dLN1982}, the above arguments also work in general convex domains. Accordingly, the positive solutions of  \eqref{041003} along some directions near the boundary are monotonic. Furthermore, one can obtain the following lemma.  The interested readers can refer to the proof of \cite[Theorem 1.1]{dLN1982}.
\begin{lemma}\label{2506192013}
Let  $\Omega$ be a bounded convex domain with smooth boundary. There exist $\beta, t_0>0$ (depending only on $\Omega$) such that any positive solution to \eqref{041003} satisfies
\begin{equation}\label{2506192108}
\begin{cases}
u(x-t\nu) \mbox{ is non-decreasing for  $t\in [0, t_0)$ where $\nu\in\mathbb{R}^N$ such that} \\
\mbox{$|\nu|=1$ and $\nu\cdot n(x)\ge \beta$ for $x\in \partial\Omega$, $n(x)$ is the outer normal direction at $x$.}
\end{cases}
\end{equation}
As a corollary,  there exist some $r>0$ and $C=C(\Omega, N, p, q, r)>0$ such that any classical positive solution to \eqref{041003} satisfies
\[
\int_{\Omega_r}u^2 \ud x\le C\int_{\Omega_{2r}\backslash \Omega_r} u^2 \ud x,
\]
where $\Omega_{r}:=\{x\in\Omega: {\rm dist}(x, \partial\Omega)<r\}$.
%moreover for any $x\in\Omega_r$, there exists a set $I_x\subset \Omega\backslash \Omega_{r}$ which is a piece of a cone with vertex $x$, such that $|I_x|\ge \gamma$ and
%\[
%u(x)\le u(y),\quad\quad \forall\, y\in I_x.
%\]
\end{lemma}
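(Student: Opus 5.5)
The plan is the de Figueiredo--Lions--Nussbaum argument: moving planes started from the boundary, using only convexity and the fact that the right-hand side is an increasing-in-$u$ function, independent of $x$.

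\textbf{Step 1: reflection.} Write the equation as $-\Delta u=g(u)$ with $g(u)=\mu u^{p-1}+u^{q-1}-\omega u$. Since $u>0$ is a classical solution, $g$ is locally Lipschitz on the range of $u$ (for $1<p<2$ one uses $u>0$ in $\Omega$, and works on compact subsets where needed). Fix $x_0\in\partial\Omega$ and a direction $\nu$ close to $n(x_0)$. Move the hyperplanes $T_\lambda=\{x\cdot\nu=\lambda\}$ inward from the supporting position. Let $\Sigma_\lambda$ be the cap cut off by $T_\lambda$ and $x^\lambda$ its reflection across $T_\lambda$.

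Smoothness and strict-enough convexity (in the dLN sense, via uniform interior/exterior geometry of a bounded smooth convex set) give a $t_0>0$, independent of $x_0$ and $\nu$ with $\nu\cdot n(x_0)\ge\beta$, such that the reflected cap stays inside $\Omega$ for depths less than $t_0$.

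\textbf{Step 2: comparison.} Compare $w_\lambda(x)=u(x^\lambda)-u(x)$ on $\Sigma_\lambda$. It satisfies the linear equation $-\Delta w_\lambda=c_\lambda(x)w_\lambda$, with $w_\lambda\ge0$ on $\partial\Sigma_\lambda$. For thin caps, the maximum principle in narrow domains gives $w_\lambda\ge0$, and the usual continuation argument pushes this up to depth $t_0$.

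The key point is that the narrow-domain maximum principle requires only a bound on $c_\lambda$ in $L^\infty$, or in $L^{N/2}$. To make $\beta,t_0$ depend only on $\Omega$, I would instead follow dLN and apply the Kelvin-transform-free version: the width of caps where the reflected region stays inside $\Omega$ is purely geometric. The continuation only needs $w_\lambda\ge0$ at the start and the strong maximum principle, which holds for any locally bounded $c_\lambda$. So $t_0$ and $\beta$ depend only on $\Omega$, not on $u$, $\omega$ or $\rho$. The result of this step is exactly \eqref{2506192108}.

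\textbf{Step 3: the integral corollary.} Choose $r$ small relative to $t_0$. For each $x\in\Omega_r$, the monotonicity along all directions $\nu$ in the cone $\{\nu\cdot n(\bar x)\ge\beta\}$, where $\bar x$ is the nearest boundary point, gives the following. There is a truncated cone $I_x\subset\Omega_{2r}\setminus\Omega_r$, with vertex $x$, opening controlled by $\beta$, and $|I_x|\ge\gamma(\Omega,r)>0$, such that $u(x)\le u(y)$ for all $y\in I_x$. Hence
\[
u(x)^2\le \frac{1}{\gamma}\int_{I_x}u^2\,\ud y\le\frac{1}{\gamma}\int_{\Omega_{2r}\setminus\Omega_r}u^2\,\ud y .
\]
Integrating over $\Omega_r$ gives the claim with $C=|\Omega_r|/\gamma$.

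\textbf{Main obstacle.} The delicate step is uniformity in Step 2. One must check that the starting position of the moving plane and the admissible depth are dictated by geometry alone. This is where dLN's treatment of general (not strictly) convex domains is needed, together with making sure the cones $I_x$ lie in the shell $\Omega_{2r}\setminus\Omega_r$ with volume bounded below uniformly in $x$. I would handle this by choosing $r\ll t_0\beta$ and using compactness of $\partial\Omega$.
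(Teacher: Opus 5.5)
Your outline follows the route the paper intends. The paper gives no argument beyond referring to the proof of \cite[Theorem 1.1]{dLN1982}, and your Step 3 is the standard cone argument behind the stated corollary. Step 3 is fine once you take the cone around $n(\bar x)$ slightly narrower, so that $\nu\cdot n\ge\beta$ holds at the point where the ray from $x$ in direction $\nu$ leaves $\Omega$. The uniformity you flag in Step 1 is also easy. By convexity, $(y-x_0)\cdot n(x_0)\le 0$ on $\overline\Omega$, so $\nu\cdot n(x_0)\ge\beta$ gives $\max_{\overline\Omega}y\cdot\nu-x_0\cdot\nu\le\sqrt{1-\beta^2}\,\mathrm{diam}(\Omega)$. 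A uniform interior-ball radius $R_0$ lets the procedure run in every direction to any depth $<R_0/2$ below the supporting plane. It therefore suffices to take $\beta$ close to $1$ and $t_0$ small.

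The genuine gap is in Step 2 when $1<p<2$, which Theorem~\ref{T051401} needs. Your assertion that $g$ is locally Lipschitz on the range of $u$ is false: the range is $[0,\max u]$, and $s\mapsto\mu s^{p-1}$ is not Lipschitz at $0$. ``Working on compact subsets'' does not help, because the caps where the procedure starts are attached to $\partial\Omega$. By the mean value theorem, $c_\lambda=\mu(p-1)\xi^{p-2}+O(1)$ with $\xi$ between $u(x)$ and $u(x^\lambda)$. So $c_\lambda\to+\infty$ at $\partial\Omega\cap\partial\Sigma_\lambda$, roughly like $d(x)^{p-2}$, which is the wrong sign for the maximum principle. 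Consequently the $L^\infty$ version of the narrow-domain principle is unavailable, and the $L^{N/2}$ version fails too once $p\le 2-\frac{2}{N}$. The result in \cite{dLN1982} that the paper cites is formulated for locally Lipschitz $f$, so the paper's reference glosses over the same point.

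The missing idea is to use $\mu>0$, so that the non-Lipschitz term is nondecreasing. In the Gidas--Ni--Nirenberg form of the argument, you only need the equation for $w_\lambda$ where $w_\lambda\ge 0$. There $\mu\big(u(x^\lambda)^{p-1}-u(x)^{p-1}\big)\ge 0$, so $-\Delta w_\lambda\ge c_1 w_\lambda$ with $c_1$ bounded (coming from the Lipschitz part $u^{q-1}-\omega u$). This suffices for the strong maximum principle and for Hopf's lemma on $T_\lambda\cap\Omega$.

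The start, and the contradiction near $T_\lambda\cap\partial\Omega$ when you push past the limiting position, must come from the boundary lemma, not from a narrow-domain principle. Hopf's lemma applied to $-\Delta u+\omega^+u=\mu u^{p-1}+u^{q-1}+\omega^-u\ge 0$ gives $\partial_n u<0$ on $\partial\Omega$. Since $u\in C^1(\overline\Omega)$, this gives $\partial_\nu u<0$ near boundary points with $\nu\cdot n>0$. This also corrects your description of the continuation: it needs this boundary lemma and Hopf's lemma on $T_\lambda$, not just the strong maximum principle.
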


\medskip
\noindent$\mathbf{Proof~of~Theorem~\ref{T051401}}$.  Assume that  $(\omega, u)\subset \mathbb{R}\times H^1_0(\Omega)$ is a solution of \eqref{041003}. Using the first eigenfunction $\varphi_1>0$ as a test function to multiply \eqref{041003}, we get
\begin{equation}\label{2509241809}
\begin{aligned}
\lambda_1+\omega\ge\frac{\int_{\Omega}u^{q-1}\varphi_1 \ud x}
{\int_{\Omega}u\varphi_1 \ud x}.
\end{aligned}
\end{equation}
By Lemma \ref{2506192013}, there exist some $r>0$, $C_1=C_1(\Omega, N, p, q, r)>0$  such that
\begin{equation}\label{2509241810}
\begin{aligned}
\rho>\|u\|^2_{L^2(\Omega\backslash\Omega_r)}\geq C_1\rho.\\
\end{aligned}
\end{equation}
By H\"older inequality, we have
\begin{equation}\label{25092418100}
\begin{aligned}
\int_{\Omega}u\varphi_1 \ud x\leq \|\varphi_1\|_2\rho^{\frac12}.
\end{aligned}
\end{equation}

\smallskip
Let us denote $m:=\inf\limits_{x\in \Omega\backslash\Omega_r}\varphi_1(x)>0$. By \eqref{2509241809}, \eqref{2509241810}, H\"older inequality and $q-1\ge 2$, we get that
\begin{equation}\label{2505261543}
\begin{aligned}
\int_{\Omega}u^{q-1}\varphi_1 \ud x\ge \int_{\Omega\backslash\Omega_r}u^{q-1}\varphi_1 \ud x\ge m\int_{\Omega\backslash\Omega_r}u^{q-1} \ud x\ge mC_1^{\frac{q-1}{2}}|\Omega\backslash\Omega_r|^{\frac{3-q}{2}}\rho^{\frac{q-1}{2}}.
\end{aligned}
\end{equation}
Using \eqref{2509241809}-\eqref{2505261543}, we obtain that
\begin{equation*}
\begin{aligned}
\lambda_1+\omega\ge  \frac{mC_1^{\frac{q-1}{2}}|\Omega\backslash\Omega_r|^{\frac{3-q}{2}}\rho^{\frac{q-1}{2}}}
{\|\varphi_1\|_2\rho^{\frac12}}.
\end{aligned}
\end{equation*}
 By Lemma \ref{2410040137}, $\rho$ is bounded from above provided $1<p\le 2$. So the proof is done.

\section{Dichotomy result for Br\'{e}zis-Nirenberg problem in a ball}\label{S_5}

In this section, we consider the Br\'{e}zis-Nirenberg problem in a ball subject to prescribed mass, namely
\begin{equation}\label{060908}
\begin{cases}
-\Delta u+\omega u=u^{2^{*}-1},~ u>0 \quad\text { in } B_1(0), \\
\|u\|^2_{L^2(B_1(0))}=\rho,\\
u\in H^1_0(B_1(0)).
\end{cases}
\end{equation}
 We learn from \cite{Zhang92} that for $N\ge 4$ and $\lambda\in \left(0, \lambda_1(B_1(0))\right)$, or $N=3$ and $\lambda\in \left(\frac{\lambda_1(B_1(0))}{4},\lambda_1(B_1(0))\right)$, the solution of \eqref{060909} is unique and radially symmetric. We denote this unique solution by $u_\lambda$. Comparing \eqref{060908} with \eqref{060909}, we only don't know the $L^2$-norm of $u_\lambda$, which motivates us to explore the tendency of $\|u_\lambda\|_2$ as $\lambda$ varies in either $(0,\lambda_1(B_1(0)))$ when $N>3$ or $\left(\frac{\lambda_1(B_1(0))}{4},\lambda_1(B_1(0))\right)$ when $N=3$. The first question is whether $\|u_\lambda\|_2$ is continuous with respect to the parameter $\lambda$. Since the solution of \eqref{060909} is radial and unique, \eqref{060909} is equivalent to the following ordinary differential equation 
 \begin{equation*}
\begin{cases}
w''+\frac{N-1}{r}w'+\lambda w+w^{2^{*}-1}=0,\\
w'(0)=0, w(1)=0.
\end{cases}
\end{equation*}
 By the continuity theorem of ordinary differential equation on initial values and parameter, it can be seen that
\begin{lemma}\label{060920}
Suppose that $N\ge 3$ and $u_\lambda$ is the unique solution to \eqref{060909}. Then when $N>3$, $u_\lambda$ is continuous in $L^{\infty}(B_1(0))$ with respect to  $\lambda\in (0,\lambda_1(B_1(0)))$; when $N=3$, $u_\lambda$ is continuous in $L^{\infty}(B_1(0))$ with respect to  $\lambda\in \left(\frac{\lambda_1(B_1(0))}{4},\lambda_1(B_1(0))\right)$.
\end{lemma}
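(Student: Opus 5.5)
The plan is to reduce the PDE statement to a shooting problem for the radial ODE and then use continuous dependence on initial data and parameters. By the uniqueness and radial symmetry result of \cite{Zhang92}, for each admissible $\lambda$ (namely $\lambda\in(0,\lambda_1(B_1(0)))$ if $N>3$, or $\lambda\in(\frac{\lambda_1(B_1(0))}{4},\lambda_1(B_1(0)))$ if $N=3$), we have $u_\lambda(x)=w_\lambda(|x|)$. Here $w_\lambda$ is the unique positive solution of
\[
w''+\tfrac{N-1}{r}w'+\lambda w+w^{2^*-1}=0,\quad w'(0)=0,\quad w(1)=0.
\]
Let $a(\lambda):=w_\lambda(0)=\|u_\lambda\|_\infty$, since radial solutions are decreasing. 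Consider the initial value problem with $w(0)=a$, $w'(0)=0$ and denote its solution by $W(r;a,\lambda)$. The singular term $\frac{N-1}{r}$ is handled in the standard way via the integral formulation
\[
W(r)=a-\int_0^r s^{1-N}\int_0^s t^{N-1}\big(\lambda W+|W|^{2^*-2}W\big)\,dt\,ds .
\]
This is a contraction near $r=0$, and for $r>0$ the equation is regular. Hence $W$ depends continuously on $(a,\lambda)$, uniformly on $[0,1]$, as long as the solution exists on $[0,1]$. Positive solutions are bounded by $a$ while decreasing, so there is no blow-up before the first zero.

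The key step is to show that $\lambda\mapsto a(\lambda)$ is continuous. Fix $\lambda_0$ and take $\lambda_n\to\lambda_0$. I would first establish local boundedness of $a(\lambda_n)$. Suppose instead that $a(\lambda_n)\to\infty$. A blow-up rescaling $v_n(s)=a_n^{-1}w_{\lambda_n}(a_n^{-2/(N-2)}s)$ then converges to the standard bubble, which has no zero. This rescaling argument (or simply the energy bounds coming from Pohozaev with $\lambda_n$ bounded away from the endpoints) should rule this out for $\lambda_0$ in the open interval. This is the known fact that concentration occurs only at the endpoint $\lambda\to0$ (for $N\ge4$) or $\lambda\to\lambda_1/4$ (for $N=3$). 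Similarly, $a(\lambda_n)$ cannot tend to $0$, since linearization would give $\lambda_0=\lambda_1(B_1(0))$.

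So, along a subsequence, $a(\lambda_n)\to a_0\in(0,\infty)$. By continuous dependence, $W(\cdot;a(\lambda_n),\lambda_n)\to W(\cdot;a_0,\lambda_0)$ uniformly on $[0,1]$. The limit is nonnegative, vanishes at $r=1$, and solves the ODE. It is positive on $[0,1)$ by the strong maximum principle or ODE uniqueness: a double zero would force $W\equiv0$, which contradicts $W(0)=a_0>0$. Therefore the limit is a positive radial solution of \eqref{060909} with $\lambda=\lambda_0$. By uniqueness from \cite{Zhang92}, it equals $w_{\lambda_0}$, so $a_0=a(\lambda_0)$. Since every subsequence has a further subsequence with the same limit, the whole sequence converges. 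Continuous dependence then also gives $\|u_{\lambda_n}-u_{\lambda_0}\|_{L^\infty(B_1(0))}=\sup_{[0,1]}|w_{\lambda_n}-w_{\lambda_0}|\to0$.

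I expect the main obstacle to be the a priori bound on $a(\lambda)$ for $\lambda$ in compact subsets of the open interval. I would get it from the blow-up argument combined with the Pohozaev identity, which shows that a concentrating family can only occur as $\lambda$ approaches the lower endpoint. Alternatively, one could quote the known asymptotics of $u_\lambda$ from \cite{Zhang92} and \cite{Brezis83}.
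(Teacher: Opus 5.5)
Your plan makes the paper's route explicit. The paper's whole argument is the appeal to the radial ODE, continuous dependence on initial data and parameters, and uniqueness from \cite{Zhang92}. It passes over the point you isolate: continuous dependence for the initial value problem gives continuity for the boundary value problem only once the shooting height $a(\lambda)=w_\lambda(0)$ is known to stay in a compact subset of $(0,\infty)$ for $\lambda$ in compact subsets of the admissible interval. Your lower bound via linearization and your compactness--uniqueness conclusion are correct.

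\textbf{The gap.} The upper bound on $a(\lambda)$ is the crux, and you do not prove it; the argument you sketch fails. After rescaling, $v_n$ lives on $[0,a_n^{2/(N-2)}]$ and vanishes at the right endpoint, which runs off to infinity. Locally uniform convergence to the zero-free bubble $\bigl(1+\frac{s^2}{N(N-2)}\bigr)^{-\frac{N-2}{2}}$ is entirely consistent with that. It is exactly what happens as $\lambda\to0$ for $N\ge4$. So ``the bubble has no zero'' cannot distinguish an interior $\lambda_0$ from the endpoint.

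Nor does the Pohozaev identity $\lambda\int_{B_1}u^2\,dx=\frac12\int_{\partial B_1}|\partial_\nu u|^2\,d\sigma$ bound anything by itself. It becomes decisive only after a quantitative blow-up analysis. One must show $a_n u_{\lambda_n}\to c\,G_{\lambda_0}(\cdot,0)$ in $C^1$ near $\partial B_1$, so the boundary term is of order $a_n^{-2}$. Meanwhile $\int u_{\lambda_n}^2$ is of order $a_n^{-4/(N-2)}$ for $N\ge5$ and $a_n^{-2}\log a_n$ for $N=4$, which forces $\lambda_0=0$. For $N=3$ both sides are of order $a_n^{-2}$. There one needs the finer three-dimensional blow-up analysis (cf.\ Atkinson--Peletier, Brezis--Peletier): blow-up forces the regular part of the Green's function of $-\Delta-\lambda_0$ to vanish at the origin, i.e.\ $\lambda_0=\lambda_1(B_1(0))/4$. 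None of this is in your proposal, and \cite{Brezis83} is an existence result that does not supply it.

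\textbf{A shorter fix.} You can close the gap with the variational characterization already used in the proof of Lemma~\ref{060921}.
By uniqueness, $u_\lambda=\varepsilon^{\frac{1}{2^*-2}}v_\varepsilon/\|v_\varepsilon\|_{2^*}$ with $\lambda=\theta_\varepsilon$. The map $\varepsilon\mapsto\theta_\varepsilon$ is continuous and strictly decreasing, since for $\varepsilon'>\varepsilon$ one has $\theta_{\varepsilon'}\le J_{\varepsilon'}(v_\varepsilon)<\theta_\varepsilon$. Hence $\lambda_n\to\lambda_0$ gives $\varepsilon_n\to\varepsilon_0<\mathcal{S}$.

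The minimizers $v_{\varepsilon_n}$ are bounded in $H_0^1$, because $(1-\varepsilon_n/\mathcal{S})\|\nabla v_{\varepsilon_n}\|_2^2\le\theta_{\varepsilon_n}$. The constraint $\|v\|_2=1$ survives in the weak limit $v_0$, so $J_{\varepsilon_n}(v_0)\ge\theta_{\varepsilon_n}$. The Br\'ezis--Lieb splitting then yields $(1-\varepsilon_n/\mathcal{S})\|\nabla(v_{\varepsilon_n}-v_0)\|_2^2\le o(1)$.

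Thus $u_{\lambda_n}\to u_{\lambda_0}$ in $H_0^1$ and $L^{2^*}$, with the limit identified by uniqueness. Consequently $u_{\lambda_n}^{2^*-2}$ is uniformly integrable in $L^{N/2}$. A Br\'ezis--Kato/Moser iteration then gives a uniform $L^\infty$ bound on $u_{\lambda_n}$, i.e.\ on $a(\lambda_n)$. From there your shooting argument finishes the proof.
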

The another question is: what's the tendency of $\|u_\lambda\|_2$ as $\lambda\to 0$  for $N>3$, and as $\lambda\to\frac{\lambda_1(B_1(0))}{4}$ for $N=3$, and as $\lambda\to \lambda_1(B_1(0))$ for $N\ge 3$? It is answered by our next lemma.
\begin{lemma}\label{060921}
Suppose that $N\ge 3$ and $u_\lambda$ is the unique solution to \eqref{060909}. Then when $N>3$, $\|u_\lambda\|_2\to 0$ as $\lambda\to 0$ and $\lambda\to \lambda_1(B_1(0))$, and when $N=3$, $\|u_\lambda\|_2\to 0$ as $\lambda\to {\frac{ \lambda_1(B_1(0))}{4}}$ and $\lambda\to \lambda_1(B_1(0))$.
\end{lemma}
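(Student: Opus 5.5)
We treat the two endpoints separately; both arguments rest on the Nehari identity $\|\nabla u_\lambda\|_2^2-\lambda\|u_\lambda\|_2^2=\|u_\lambda\|_{2^*}^{2^*}$ together with the known asymptotics of the Brézis--Nirenberg solutions from \cite{Brezis83, Zhang92}.

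\emph{Endpoint $\lambda\to\lambda_1(B_1(0))$, all $N\ge 3$.} The plan is to show that $u_\lambda\to 0$ uniformly.
\begin{enumerate}
\item Test the equation against $\varphi_1$. This gives
\[
(\lambda_1-\lambda)\int u_\lambda\varphi_1=\int u_\lambda^{2^*-1}\varphi_1 .
\]
\item Use radial monotonicity: $u_\lambda$ is radially decreasing by \cite{GNN1979}, so it is maximal at the origin. Bound the right-hand side from below by $c\,\|u_\lambda\|_\infty^{2^*-2}\int u_\lambda\varphi_1$. This holds on a region near the origin where $u_\lambda\ge \frac12\|u_\lambda\|_\infty$. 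I would obtain that region from a Harnack-type bound for the radial ODE, valid as long as $\|u_\lambda\|_\infty$ stays bounded. Dividing by $\int u_\lambda\varphi_1$ then gives $\|u_\lambda\|_\infty^{2^*-2}\lesssim \lambda_1-\lambda\to0$.
\item Rule out blow-up of $\|u_\lambda\|_\infty$ separately. If $u_\lambda$ concentrated, the Nehari identity and the energy would force the energy to equal $\frac1N\mathcal S^{N/2}$ in the limit. This contradicts the fact that the Brézis--Nirenberg energy tends to $0$ as $\lambda\to\lambda_1$, which follows by comparing with $t\varphi_1$.
\end{enumerate}
A cleaner alternative to step 3: a blow-up limit would be a positive solution of $-\Delta v=v^{2^*-1}$ with $\lambda=\lambda_1$, which is impossible by the $\varphi_1$-test.

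\emph{Endpoint $\lambda\to0$ ($N>3$) or $\lambda\to\lambda_1/4$ ($N=3$).} Here $u_\lambda$ blows up as a single bubble at the origin. By \cite{Zhang92} (and Brézis--Peletier type asymptotics), $u_\lambda$ is close to $U_{\varepsilon}$ with $\varepsilon=\varepsilon(\lambda)\to0$, and $\|\nabla u_\lambda\|_2^2\to \mathcal S^{N/2}$. The $L^2$ norm then behaves like \eqref{2509112115}, namely $O(\varepsilon^2)$, $O(\varepsilon^2|\ln\varepsilon|)$ or $O(\varepsilon)$, which tends to $0$. The steps are:
\begin{enumerate}
\item Show that the maximum $M_\lambda=u_\lambda(0)\to\infty$. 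This is needed because at the endpoint no solution exists; a bounded family would converge to one.
\item Rescale $v(y)=M_\lambda^{-1}u_\lambda(M_\lambda^{-2/(N-2)}y)$ and get convergence to the standard bubble.
\item Using radial decay, get the pointwise bound
\[
u_\lambda(x)\le C M_\lambda\bigl(1+M_\lambda^{4/(N-2)}|x|^2\bigr)^{-(N-2)/2}.
\]
\item Integrate the square of this bound to obtain the $L^2$ estimate.
\end{enumerate}

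\emph{Main obstacle.} The hard step is the uniform pointwise bound for radial solutions in step 3, especially for $N=3$. There the bubble's $L^2$ mass is only borderline ($|x|^{-2}$ decay is only barely square integrable), and the limit is $\lambda\to\lambda_1/4$ rather than $\lambda\to 0$. For $N=3$ I would use the ODE explicitly: outside a small ball, the equation is a linear perturbation $w''+\frac2rw'+\lambda w\approx0$. Its solutions $\sin(\sqrt\lambda r)/r$ can be matched with amplitude $\sim M_\lambda^{-1}$, and this forces the $L^2$ norm to be $O(M_\lambda^{-1})\to0$.
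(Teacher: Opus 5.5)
You have a genuine gap at the blow-up endpoint ($\lambda\to0$ for $N>3$, $\lambda\to\lambda_1(B_1(0))/4$ for $N=3$). Your argument there rests entirely on the uniform bound $u_\lambda(x)\le CM_\lambda\bigl(1+M_\lambda^{4/(N-2)}|x|^2\bigr)^{-(N-2)/2}$, and you do not prove it.

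\begin{itemize}
\item Radial monotonicity together with the bound on $\|u_\lambda\|_{2^*}$ only gives $u_\lambda(r)\le Cr^{-(N-2)/2}$, from $|B_r|\,u_\lambda(r)^{2^*}\le\|u_\lambda\|_{2^*}^{2^*}$. That is far from the tail $CM_\lambda^{-1}r^{2-N}$ you need.
\item For $N=3$ the matching with $\sin(\sqrt\lambda\, r)/r$ is heuristic and circular. Treating the equation as a linear perturbation outside a small ball presupposes that $u_\lambda$ is already small there, which is what you are trying to show.
\item This estimate is exactly the content of the Br\'ezis--Peletier-type ODE analysis you invoke, and it cannot be waved through.
\item Step 1 is also not justified as written: a bounded family could converge to the trivial solution. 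You would need the lower bound $\|u_\lambda\|_\infty^{2^*-2}\ge\lambda_1-\lambda$ from the $\varphi_1$-test, although in that case the conclusion would hold anyway.
\end{itemize}

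The missing idea is that no pointwise information is needed. $L^2$ is a subcritical norm on $B_1(0)$, so concentration at the origin carries no $L^2$ mass in the limit. The paper's argument runs as follows.
\begin{itemize}
\item $\{u_\lambda\}$ is bounded in $H^1_0(B_1(0))$. The paper gets this from the representation $u_\lambda=\varepsilon^{1/(2^*-2)}v_\varepsilon/\|v_\varepsilon\|_{2^*}$ with $\lambda=\theta_\varepsilon$, where $v_\varepsilon$ minimizes $\|\nabla u\|_2^2-\varepsilon\|u\|_{2^*}^2$ on the unit $L^2$-sphere. It also follows from the Nehari identity and $J_\lambda(u_\lambda)<\frac1N\mathcal{S}^{N/2}$, where $J_\lambda$ is the Br\'ezis--Nirenberg functional.
\item Any weak limit $u\ge0$ solves $-\Delta u=u^{2^*-1}$ in $B_1(0)$, or $-\Delta u-\frac{\lambda_1}{4}u=u^5$ when $N=3$.
\item Hence $u=0$, by the Pohozaev identity, or by the Br\'ezis--Nirenberg nonexistence result at $\lambda=\lambda_1/4$ when $N=3$.
\item Rellich's theorem then gives $\|u_\lambda\|_2\to0$ along every sequence.
\end{itemize}

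At the endpoint $\lambda\to\lambda_1$ your argument is fine, provided you state that by uniqueness $u_\lambda$ is the least-energy solution. In fact the energy comparison alone finishes the job, so your steps 1--2 there are unnecessary: $\frac1N\|u_\lambda\|_{2^*}^{2^*}=J_\lambda(u_\lambda)\le\max_{t>0}J_\lambda(t\varphi_1)=O\bigl((\lambda_1-\lambda)^{N/2}\bigr)$, and $\|u_\lambda\|_2\le|B_1|^{1/N}\|u_\lambda\|_{2^*}$.

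Drop your ``cleaner alternative'' to step 3, though. A blow-up limit lives on $\mathbb{R}^N$, where the term $\lambda u$ scales away. The limit is the standard bubble, which does exist, and the $\varphi_1$-test is not available there.
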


Concerning the proof of Lemma \ref{060921}, some related results were established in \cite{Rey1990, Zhang92}. For completeness, we give it here.

\begin{proof}[\bf Proof of Lemma \ref{060921}]
We consider the minimization problem
\[
\theta_\varepsilon:=\inf\left\{J_{\varepsilon}(u)=\|\nabla u\|_2^2-\varepsilon \|u\|_{2^*}^2,\;\; u\in H_0^1(B_1(0)),\;\|u\|_2=1\right\}.
\]
By \cite[Lemma 4.1]{Zhang92},  for $\varepsilon\in (0, \mathcal{S})$, $\theta_{\varepsilon}$ can be attained, where $\mathcal{S}$ is the best Sobolev constant. By \cite[Lemma 4.2]{Zhang92}, we have that $\theta_\varepsilon$ is continuous and decreasing with respect to the parameter $\varepsilon$, and
\[
\begin{aligned}
&\theta_{\varepsilon}\to \lambda_1(B_1(0))\quad \mbox{as } \varepsilon\to 0,\\
&\theta_{\varepsilon}\to
\begin{cases}
0,\quad &N\ge 4,\\
\frac{\lambda_1(B_1(0))}{4},\quad &N=3,
\end{cases}
\quad \mbox{as }\varepsilon\to \mathcal{S}.
\end{aligned}
\]
The minimizer of $\theta_\varepsilon$ can be denoted by $v_\varepsilon$, which satisfies
\begin{equation}\label{2410050043}
\begin{cases}
-\Delta v_\varepsilon= \theta_\varepsilon v_\varepsilon+\varepsilon \|v_\varepsilon\|_{2^*}^{2-2^*}v_\varepsilon^{2^*-1},\;\;\; v_\varepsilon>0\;\; \mbox{in } B_1(0),\\
v_\varepsilon=0\;\; \mbox{on } \partial B_1(0).
\end{cases}
\end{equation}
By the uniqueness,
\begin{equation}\label{2410050047}
u_\lambda=\varepsilon^{\frac{1}{2^*-2}}\frac{v_\varepsilon}{\|v_\varepsilon\|_{2^*}},\quad \mbox{with }\lambda=\theta_\varepsilon.
\end{equation}

 We claim that $\{u_\lambda\}$ is bounded in $H_0^1(B_1(0))$ as $\lambda\to \lambda_1$. In fact, it follows from \eqref{2410050043} and \eqref{2410050047} that
\begin{equation}\label{2410050058}
\|\nabla v_\varepsilon\|_2^2=\theta_\varepsilon +\varepsilon \|v_\varepsilon\|_{2^*}^2,
\end{equation}
and
\[
\|\nabla u_{\theta_\varepsilon}\|_2^2=\varepsilon^{\frac{2}{2^*-2}}\|v_\varepsilon\|^{-2}_{2^*}\|\nabla v_\varepsilon\|_2^2.
\]
For $\varepsilon<\frac{\mathcal{S}}{2}$, we conclude from \eqref{2410050058} and Sobolev inequality that
\[
\frac{1}{2}\|\nabla v_\varepsilon\|_{2}^2\le \left(1-\frac{\varepsilon}{\mathcal{S}}\right)\|\nabla v_\varepsilon\|_{2}^2\le \|\nabla v_\varepsilon\|_2^2-\varepsilon \|v_\varepsilon\|_{2^*}^2= \theta_\varepsilon<\lambda_1(B_1(0)).
\]
As a consequence, $\{v_\varepsilon\}$ is bounded in $H_0^1(B_1(0))$ as $\varepsilon\to 0$. By \eqref{2410050043}, $\|v_\varepsilon\|_2=1$ and $\theta_\varepsilon\to \lambda_1$, $v_\varepsilon$ must converge to the first eigenfunction $\varphi_1$ as $\varepsilon\to 0$. It follows from \eqref{2410050047} that $u_\lambda\to 0$ in $H_0^1(B_1(0))$.
\smallskip

In the sequel, we shall show that $\{u_\lambda\}$ is bounded in $H_0^1(B_1(0))$ as either $\lambda\to 0$ for $N\ge 4$, or $\lambda\to \frac{\lambda_1}{4}$ for $N=3$. In the following, we only consider $N\ge 4$, the case $N=3$ is completely a same procedure.

For $\varepsilon\in (\frac{\mathcal{S}}{2}, \mathcal{S})$, if $\{v_\varepsilon\}$ is unbounded in $H_0^1(B_1(0))$, we can assume that as $\varepsilon\to \mathcal{S}$,
\[
\|\nabla v_\varepsilon\|_2\to\infty.
\]
Form \eqref{2410050058} and $\theta_{\varepsilon}\to 0$ as $\varepsilon\to\mathcal{S}$, we have
\begin{equation}\label{2505250035}
\frac{\|\nabla v_{\varepsilon}\|_2^2}{\|v_{\varepsilon}\|_{2^*}^2}\to \mathcal{S}.
\end{equation}
It follows from \eqref{2410050047} and \eqref{2505250035} that
$$\|\nabla u_{\theta_\varepsilon}\|_2=\varepsilon^{\frac{1}{2^*-2}}\frac{\|\nabla v_{\varepsilon}\|_2}{\|v_{\varepsilon}\|_{2^*}}\to \mathcal{S}^{\frac{2^*}{2(2^*-2)}}$$
 as $\varepsilon\to\mathcal{S}$. So $\{u_\lambda\}$ is bounded in $H_0^1(B_1(0))$ as $\lambda\to 0$.

 \smallskip
 Next, we assume $\{v_\varepsilon\}$ is bounded in $H_0^1(B_1(0))$ as $\varepsilon\to \mathcal{S}$.  Since $\|v_\varepsilon\|_2=1$, $\{v_\varepsilon\}$ has a positive lower bound in $L^{2^*}(B_1(0))$, which together with \eqref{2410050047} implies that $\{u_\lambda\}$ is bounded in $H_0^1(B_1(0))$.

 \smallskip

Up to a subsequence, there exists some $u\in H_0^1(B_1(0))$ such
 that $u_\lambda \rightharpoonup u$ as $\lambda\to 0$. Therefore $u$ is a solution of
 \[
 -\Delta u=u^{2^*-1}
 \]
thus $u=0$. Finally, we have $u_\lambda\to 0$ in $L^2(B_1(0))$ as $\lambda\to 0$.
\end{proof}

\noindent{\bf Proof of Theorem \ref{2505261703} completed.} From Lemma \ref{060920} and Lemma \ref{060921},  it is easily known that there are two solutions to (\ref{060908}) as $\rho$ is small, and no solutions as $\rho$ is large, and Theorem \ref{2505261703} can be proved.

%\subsection{A lower bound  for $\rho^*$}
%
%In this subsection, we will give a lower bound for $\rho^*$, where $\rho^*$ is given in Theorem \ref{2505261703}. First, 

\end{document}